\newcommand{\defstyle}[1]{\textbf{#1}}
\newcommand{\myprob}[1]{\mathbb P \left[ #1 \right]}
\newcommand{\norm}[1]{\left| #1 \right|}
\newcommand{\tv}[2]{||#1-#2||}
\newcommand{\bs}[1]{\boldsymbol{#1}}
\newcommand{\del}[1]{}
\newcommand{\mar}[1]{\marginpar{\scriptsize  #1}}
\newcommand{\unwritten}[1]{}%\marginpar{\scriptsize \color{red} #1}}
\newcommand{\pmm}{PMM}
\newcommand{\pms}{PM}
\newcommand{\supp}{\mathrm{supp}}
\newcommand{\diam}{\mathrm{diam}}
\newcommand{\distortion}{\mathrm{dis}}
\newcommand{\oball}[2]{B_{#1}(#2)}
\newcommand{\cball}[2]{\overline{B}_{#1}(#2)}
\newcommand{\pcball}[2]{\overline{#1}^{(#2)}}
\newcommand{\poball}[2]{{#1}^{(#2)}}
\newcommand{\cnei}[2]{{N}_{#1}(#2)}
\newcommand{\hp}{d_{HP}}
\newcommand{\ghp}{d_{GHP}}
\newcommand{\cghp}{d^c_{GHP}}
\newcommand{\gh}{d_{GH}}
\newcommand{\cgh}{d^c_{GH}}
\newcommand{\hausdorff}{d_H}
\newcommand{\prokhorov}{d_P}
\newcommand{\restrict}[2]{{%
		\left.\kern-\nulldelimiterspace %
		#1 %
		\vphantom{\big|} %
		\right|_{#2} %
}}
\newcommand{\rooot}{root}
\newcommand{\mstar}{\mathfrak N_*}
\newcommand{\mcstar}{\mathfrak N^c_*}
\newcommand{\mcstarzero}{\mathfrak N^c}
\newcommand{\mmstar}{\mathfrak M_*}
\newcommand{\mmcstar}{\mathfrak M^c_*}
\newcommand{\mwstar}[1]{\mathfrak M_{#1}}
\newcommand{\dirac}[1]{\delta_{#1}}
\theoremstyle{theorem}
\newtheorem{theorem}{Theorem}[section]
\newtheorem{lemma}[theorem]{Lemma}
\newtheorem{proposition}[theorem]{Proposition}
\newtheorem{corollary}[theorem]{Corollary}
\theoremstyle{definition}
\newtheorem{definition}[theorem]{Definition}
\theoremstyle{definition}
\newtheorem{remark}[theorem]{Remark}
\newtheorem{convention}[theorem]{Convention}
\theoremstyle{theorem}
\numberwithin{equation}{section}
\let\orgdescriptionlabel\descriptionlabel
\renewcommand*{\descriptionlabel}[1]{%
	\let\orglabel\label
	\let\label\@gobble
	\phantomsection
	\edef\@currentlabel{#1}%
	\let\label\orglabel
	\orgdescriptionlabel{#1}%
}
\begin{document}

\title{Metrization of the Gromov-Hausdorff (-Prokhorov) Topology for Boundedly-Compact Metric Spaces}

\author{Ali Khezeli\footnote{Tarbiat Modares University, khezeli@modares.ac.ir}}

\maketitle

\begin{abstract}
	In this work, a metric is presented on the set of boundedly-compact pointed metric spaces that generates the Gromov-Hausdorff topology. A similar metric is defined for measured metric spaces that generates the Gromov-Hausdorff-Prokhorov topology. This extends previous works which consider only length spaces or discrete metric spaces. Completeness and separability are also proved for these metrics. Hence, they provide the measure theoretic requirements to study random (measured) boundedly-compact pointed metric spaces, which is the main motivation of this work.
	In addition, we present a generalization of the classical theorem of Strassen which is of independent interest. This generalization proves an equivalent formulation of the Prokhorov distance of two finite measures, having possibly different total masses, in term of approximate coupling. A Strassen-type result is also proved for the Gromov-Hausdorff-Prokhorov metric for compact spaces.
\end{abstract}

\tableofcontents

\section{Introduction}

Subsection~\ref{subsec:intro-Gromov} below provides an introduction to the notion of Gromov-Hausdorff convergence on the set of all \textit{boundedly-compact pointed metric spaces}. The contributions of the present paper are introduced in Subsection~\ref{subsec:intro-thiswork}.

\subsection{Introduction to the Gromov-Hausdorff Topology}
\label{subsec:intro-Gromov}
\textbf{The Gromov-Hausdorff Metric.} The Hausdorff metric, denoted by $\hausdorff$, defines the distance of two compact subsets of a given metric space. Gromov defined
a metric on the set $\mcstarzero$ of all %
compact metric spaces which are not necessarily contained in a given space (isometric metric spaces are regarded equivalent). This metric is called the \textit{Gromov-Hausdorff metric} in the literature. %
The distance of two compact metric spaces $X$ and $Y$ is defined by
\begin{equation}
\label{eq:GH}
\cgh(X,Y):=\inf \hausdorff(f(X),g(Y)),
\end{equation}
where the infimum is over all metric spaces $Z$ and all pairs of isometric embeddings $f:X\to Z$ and $g:Y\to Z$ (an isometric embedding is a distance-preserving map which is not necessarily surjective).

The Gromov-Hausdorff metric has been defined for group-theoretic purposes. However, it has found important applications in probability theory as well, 
since it enables one to study \textit{random compact metric spaces}. Specially, this is used in the study of \textit{scaling limits} of random graphs and other random objects. This goes back to the novel work of Aldous \cite{Al91-crtI} who proved that a random tree with $n$ vertices, chosen uniformly at random and scaled properly, converges to a random object called \textit{the Brownian continuum random tree} in a suitable sense as $n$ tends to infinity. Using Gromov's definition, Aldous's result can be restated in terms of weak convergence of probability measures on $\mcstarzero$ (see~\cite{Le06} and~\cite{EvPiWi06}). Since then, scaling limits of various random discrete models have also been studied. 
An important topological property needed for probability-theoretic applications is that the set $\mcstarzero$ (or other relevant sets) %
is complete and separable, and hence, can be used as a standard probability space.

\textbf{The Gromov-Hausdorff-Prokhorov Metric.}
The \textit{Prokhorov metric}, denoted by $\prokhorov$, defines the distance of two finite measures on a common metric space. By using this metric, the Gromov-Hausdorff metric is generalized to define the distance of two  \textit{compact measured metric spaces} (\cite{GrPfWi09}, \cite{Mi09}, \cite{bookVi10} and~\cite{AbDeHo13}),
where a compact measured metric space is a compact metric space $X$ together with a finite measure $\mu$ on $X$.
The metric is usually called the \textit{Gromov-Hausdorff-Prokhorov} metric and is defined by 
\[
\cghp((X,\mu),(Y,\nu)):=\inf \{\hausdorff(f(X),g(Y))\vee \prokhorov(f_*\mu,g_*\nu)\}, 
\]
where the infimum is over all metric spaces $Z$ and isometric embeddings $f:X\to Z$ and $g:Y\to Z$ ($f_*\mu$ denotes the push-forward of the measure $\mu$ by $f$). %

This metric plays an important role in defining and studying random measured metric spaces (see e.g., \cite{AbDeHo13} and the papers citing it).
In particular, since every discrete set can be naturally equipped with the counting measure, this metric can be used to prove stronger convergence results in scaling limits of discrete objects. Also, it has applications in mass-transportation problems (see e.g., \cite{bookVi10} and the references mentioned therein). %

\textbf{The Non-Compact Case.}
To relax the assumption of compactness,
it is convenient to consider \textit{boundedly-compact} metric spaces; i.e., metric spaces in which every closed ball of finite radius is compact. Also, it is important in many aspects to consider \textit{pointed} metric spaces; i.e., metric spaces with a distinguished point, which is called \textit{the origin} here. %
Then, the notion of \textit{Gromov-Hausdorff convergence} is defined for sequences of boundedly-compact pointed metric spaces (see e.g.,~\cite{bookBBI}), which goes back to Gromov~\cite{bookGr99}. 
Heuristically, the idea is to consider large balls centered at the origins and compare them using the Gromov-Hausdorff metric in the compact case (the precise definition takes into account the discontinuity issues caused by the points which are close to the boundaries of the balls). %
This gives a topology on the set $\mstar$ of boundedly-compact pointed metric spaces, called the \textit{Gromov-Hausdorff topology}. 
The notion of \textit{Gromov-Hausdorff-Prokhorov} convergence and topology~\cite{bookVi10} (also called \textit{measured Gromov-Hausdorff convergence}) is  defined similarly on the set $\mmstar$ of boundedly-compact pointed measured metric spaces (in which the measures are \textit{boundedly-finite}). %
The next subsection provides more discussion on the matter.
%

\iffalse
In the boundedly-compact case, under some restrictions on the metric spaces under study, similar metrics are also defined that give the Gromov-Hausdorff (-Prokhorov) topology (restricted to the corresponding subsets of $\mstar$ or $\mmstar$). Examples are considering only \textit{length-spaces} \cite{AbDeHo13}, discrete spaces \cite{dimensionI} or graphs (later: ref to Benjamini-Schramm metric) (later: the latter is much older). 
%
The present work defines a metric on the set of all boundedly-compact pointed (measured) metric spaces that gives the mentioned topology. 
Also, completeness and separability of this metric is proved. This is important if one wants to consider random (measured) metric spaces in the boundedly-compact pointed case. To the best of the author's knowledge, no such metric has been defined before. More details and the other contributions of this paper are introduced in the next subsection.

%
\fi

\del{\textbf{Generalizations.}
Various metrics are defined in the literature which are similar to the Gromov-Hausdorff metric. They consider (specific sets of) compact metric spaces equipped with some additional geometric structure. For instance, the Gromov-Hausdorff-Prokhorov metric is the case when the additional structure on a compact metric space $X$ is a finite measure on $X$. Other examples of additional structures on $X$ are a point of $X$, a finite number of points (\cite{processes}, \cite{AdBrGoMi17}, \cite{dimensionI}), a finite number of measures on $X$ (\cite{AdBrGoMi17}), a finite number of closed subsets of $X$ (\cite{Mi09}), a curve in $X$ (\cite{GwMi17}), a marking of points of $X$ if $X$ is discrete (\cite{processes}, \cite{dimensionI}), or a tuple of such structures (\cite{AdBrGoMi17}, \cite{GwMi17}).
The proofs of the properties of most of these metrics are  similar to those of the Gromov-Hausdorff metric. See the next subsection for more discussion and the contributions of the present paper.}

\subsection{Introduction to the Contributions of the Present Paper}
\label{subsec:intro-thiswork}
The main focus of this work is on boundedly-compact pointed metric spaces and measured metric spaces. In the boundedly-compact case, under some restrictions on the metric spaces under study, similar metrics are defined in the literature that generate the Gromov-Hausdorff (-Prokhorov) topology restricted to the corresponding subsets of $\mstar$ or $\mmstar$. For instance, \cite{AbDeHo13} considers only \textit{length spaces} (i.e., metric spaces in which the distance of any two points is the infimum length of the curves connecting them) and~\cite{dimensionI} considers discrete metric spaces. Also, in the case of graphs (where every graph is equipped with the graph-distance metric), the Benjamini-Schramm metric~\cite{BeSc01} does the job. %
These papers use the corresponding metrics to study random \textit{real trees}, random discrete metric spaces and random graphs respectively, in the non-compact case.

{The main contribution of the present paper is} the definition of
a metric on the set $\mstar$ of \textit{all} boundedly-compact pointed metric spaces (which are not necessarily length spaces or discrete spaces) that generates the Gromov-Hausdorff topology. The same is done for measured metric spaces as well 
(connections with the metric defined in~\cite{AtLoWi16} will be discussed in the last section). This enables one to define and study random (measured) boundedly-compact pointed metric spaces, which is the main motivation of this paper.

To define the distance of two boundedly-compact pointed metric spaces $(X,o)$ and $(X',o')$, the idea is, as in the Gromov-Hausdorff convergence, to compare large balls centered at $o$ and $o'$ (this idea is sometimes called \textit{the localization method}, which is commonly used in various situations in the literature some of which are discussed in Section~\ref{sec:special}). 
There are some pitfalls caused by boundary-effects of the balls; e.g., the value $\cgh(\cball{r}{o},\cball{r}{o'})$, where $\cball{r}{o}$ is the closed ball of radius $r$ centered at $o$, is not monotone in $r$. The definition of this paper is based on the following value, which has a useful monotonicity property: %
$\inf\left\{\cgh(\cball{r}{o},Y) \right\}$,
where the infimum is over all compact subsets $Y\subseteq X'$ such that $Y\supseteq \cball{r-1/r}{o'}$ (the last condition can also be removed and most of the results remain valid). Here, a version of the metric $\cgh$ for pointed metric spaces should be used. 
For measured metric spaces, a similar metric is also provided which gives the Gromov-Hausdorff-Prokhorov topology. The definition of this metric is based on a similar idea.

It is also proved that the set $\mstar$ (resp. $\mmstar$) of boundedly-compact pointed (measured) metric spaces is complete and separable, and hence, can be used as a standard probability space.  
This is important if one wants to consider random (measured) metric spaces in the boundedly-compact %
case. 

Meanwhile, as a tool in the proofs, a generalization of K\"onig's's infinity lemma is proved for compact sets, which is of independent interest. 
The arguments based on this lemma are significantly simpler in comparison with similar arguments in the literature. %

Other variants of the metric are also available, for instance 
\begin{equation*}
\int_0^{\infty} e^{-r}\left(1\wedge \cgh(\cball{r}{o}, \cball{r}{o'}) \right)dr.
\end{equation*}
By the results of this paper, one can show that this formula defines a metric on $\mstar$ as well and has similar properties (formulas like this are common in various settings in the literature; e.g., \cite{AbDeHo13}),
but the definition of the present paper enables one to have more quantitative bounds in the arguments.

In addition, a generalization of Strassen's theorem~\cite{St65} is presented, which is of independent interest and is useful in the arguments. The result provides an equivalent formulation of the Prokhorov distance between two given finite measures on a common metric space. The original theorem of Strassen does this in the case of probability measures. A Strassen-type result is also presented for the Gromov-Hausdorff-Prokhorov metric in the compact case.

\del{The second main contribution of the paper is a generalization of the Gromov-Hausdorff metric to the set of metric spaces equipped with some additional %
structure. 
This unifies most of the examples mentioned in the previous subsection, where the additional structure is a point of $X$, a tuple of measures on $X$, etc.
In general, for a compact metric space $X$, let $\tau(X)$ be a set that represents  the set of possible additional structures on $X$.
Under some assumptions on $\tau$, which are tried to be minimalistic, a version of the Gromov-Hausdorff metric is defined and its completeness and separability is proved.
Moreover, a metric is defined in the boundedly-compact pointed case under further assumptions.
Some new specific examples of additional structures are also studied. This generalization might also be useful in the future if one wants to study a new type of additional structures on compact (or boundedly-compact) metric spaces. }

Finally, the connections to other notions in the literature are discussed. 
This includes random measures, Benjamini-Schramm metric for graphs, the Skorokhod space of c\`adl\`ag functions, the work of~\cite{AtLoWi16} for \textit{metric measure spaces}, and more.

The structure of the paper is as follows. Section~\ref{sec:HausdorffProkhorov} recalls the Hausdorff and Prokhorov metrics and also provides the generalization of Strassen's theorem. In Section~\ref{sec:ghp}, the Gromov-Hausdorff-Prokhorov metric is recalled in the compact case and a Strassen-type theorem is proved for it. The metric is also extended to the general boundedly-compact pointed case (it contains the Gromov-Hausdorff metric as a special case). The properties of this metric are also studied therein. 
Finally, Section~\ref{sec:special} discusses special cases of the metric which already exist in the literature and also discusses the connections to other notions.

\section{The Hausdorff and Prokhorov Metrics}
\label{sec:HausdorffProkhorov}

In this section, the definitions and basic properties of the Hausdorff and Prokhorov metrics are recalled. Also, a generalization of Strassen's  theorem~\cite{St65} is provided (Theorem~\ref{thm:strassen}) which gives an equivalent formulation of the Prokhorov metric. It will be used in the next section.

\subsection{Notations}

The set of nonnegative real numbers 
is denoted by $\mathbb R^{\geq 0}$. The minimum and maximum binary operators 
are denoted by $\wedge$ and $\vee$ respectively. 

For all metric spaces $X$ in this paper, the metric on $X$ is always denoted by $d$ if there is no ambiguity. 
For a closed subset $A\subseteq X$, the (closed) $r$-neighborhood of $A$ in $X$ is the set $\cnei{r}{A}:=\{x\in X:\exists y\in A: d(x,y)\leq r \}$.
The complement of  $A$ is denoted by $A^c$ or $X\setminus A$.
The two projections from $X\times Y$ onto $X$ and $Y$ are denoted by $\pi_1$ and $\pi_2$ respectively.
Also, all measures on $X$ are assume to be Borel measures. The Dirac measure at $a\in X$ is denoted by $\dirac{a}$.  If $\mu$ is a measure on $X$, the \textit{total mass} of $\mu$ is defined by
\[
||\mu||:=\mu(X).
\] 
If in addition, $\rho:X\rightarrow Y$ is measurable, $\rho_*\mu$ denotes the push-forward of $\mu$ under $\rho$; i.e., $\rho_*\mu(\cdot) = \mu(\rho^{-1}(\cdot))$. If $\mu$ and $\nu$ are measures on $X$, the \textit{total variation distance} of $\mu$ and $\nu$ is defined by 
\[
\tv{\mu}{\nu}:= \sup\{\norm{\mu(A)-\nu(A)}: A\subseteq X \}.
\]

\subsection{The Hausdorff Metric}
The following definitions and results are borrowed from~\cite{bookBBI}.
Let $Z$ be a metric space. %
For two closed subsets $A,B\subseteq Z$, the \defstyle{Hausdorff distance} of $A$ and $B$ is defined by
\begin{equation}
\label{eq:hausdorff}
\hausdorff(A,B):= \inf\{\epsilon\geq 0: A\subseteq \cnei{\epsilon}{B} \text{ and } B\subseteq \cnei{\epsilon}{A} \}.
\end{equation}
Let $\mathcal F(Z)$ be the set of closed subsets of $Z$. It is well known that $\hausdorff$ is a metric on $\mathcal F(Z)$. Also, if $Z$ is complete and separable, then $\mathcal F(Z)$ is also complete and separable. In addition, if $Z$ is compact, then $\mathcal F(Z)$ is also compact. See e.g., Proposition~7.3.7 and Theorem~7.3.8 of~\cite{bookBBI}.

\subsection{The Prokhorov Metric}

Fix a complete separable metric space $Z$. For two finite Borel measures $\mu$ and $\nu$ on $Z$, the \defstyle{Prokhorov distance} of $\mu$ and $\nu$ (see e.g., \cite{bookKa17randommeasures}) is defined by 
\begin{eqnarray}
\label{eq:dProkhorov}
\prokhorov(\mu,\nu):=\inf\{\epsilon>0: \forall A: \mu(A)\leq \nu(\cnei{\epsilon}{A})+\epsilon, \nu(A)\leq \mu(\cnei{\epsilon}{A})+\epsilon\},
\end{eqnarray}
where $A$ ranges over all closed subsets of $Z$.

It is well known that $\prokhorov$ is a metric on the set of finite Borel measures on $Z$ and makes it a complete and separable metric space. Moreover, the topology generated by this metric coincides with that of weak convergence (see e.g., \cite{bookKa17randommeasures}).

The following theorem is the main result of this subsection. It provides another formulation of the Prokhorov distance using the notion of \textit{approximate couplings}~\cite{AdBrGoMi17} and will be useful afterwards. %
Let $\alpha$ be a finite Borel measure on $X\times X$. The \defstyle{discrepancy} of $\alpha$ w.r.t. $\mu$ and $\nu$~\cite{AdBrGoMi17} is defined by
\[
D(\alpha;\mu,\nu):= \tv{\pi_{1*}\alpha}{\mu} + \tv{\pi_{2*}\alpha}{\nu}.
\]
One has $D(\alpha;\mu,\nu)=0$ if and only if $\alpha$ is a coupling of $\mu$ and $\nu$; i.e., $\pi_{1*}\alpha=\mu$ and $\pi_{2*}\alpha=\nu$.

\begin{theorem}[Generalized Strassen's Theorem]
	\label{thm:strassen}
	Let $\mu$ and $\nu$ be finite Borel measures on a complete separable metric space $Z$. %
	\begin{enumerate}[(i)]
		\item \label{thm:Prokhorov:1} $\prokhorov(\mu,\nu)\leq \epsilon$ if and only if there is a Borel measure $\alpha$ on $Z\times Z$ such that
		\begin{equation}
		\label{eq:thm:Prokhorov:0}		
		D(\alpha;\mu,\nu)+ \alpha(\{(x,y): d(x,y)>\epsilon \}) \leq \epsilon.
		\end{equation}

		\item Equivalently,
		\begin{equation}
		\label{eq:thm:Prokhorov:1}
		\prokhorov(\mu,\nu)=\min \{\epsilon\geq 0: \exists \alpha: D(\alpha;\mu,\nu)+ \alpha(\{(x,y): d(x,y)>\epsilon \}) \leq \epsilon \}
		\end{equation}
		and the minimum is attained.
		
		\item \label{thm:Prokhorov:2} In addition, if $\mu(Z)\leq\nu(Z)$, then the infimum in~\eqref{eq:thm:Prokhorov:1} is attained for $\epsilon:=\prokhorov(\mu,\nu)$ and some $\alpha$ such that $\pi_{1*}\alpha=\mu$ and $\pi_{2*}\alpha\leq \nu$. Moreover, $\alpha$ can be chosen to be supported on $\supp(\mu)\times \supp(\nu)$.
	\end{enumerate}
\end{theorem}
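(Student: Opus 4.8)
The plan is to prove part~(i) by a construction that is arranged to also yield part~(iii), and then to deduce part~(ii) formally. The ``if'' direction of~(i) is a direct estimate: if $\alpha$ satisfies \eqref{eq:thm:Prokhorov:0}, then for a closed set $A\subseteq Z$ I would split $\pi_{1*}\alpha(A)=\alpha\bigl(A\times\cnei{\epsilon}{A}\bigr)+\alpha\bigl(A\times(Z\setminus\cnei{\epsilon}{A})\bigr)$; since $x\in A$ and $y\notin\cnei{\epsilon}{A}$ force $d(x,y)>\epsilon$, the second term is at most $\alpha(\{d(x,y)>\epsilon\})$, while the first is at most $\pi_{2*}\alpha(\cnei{\epsilon}{A})\le\nu(\cnei{\epsilon}{A})+\tv{\pi_{2*}\alpha}{\nu}$. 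Combining this with $\mu(A)\le\pi_{1*}\alpha(A)+\tv{\pi_{1*}\alpha}{\mu}$ gives $\mu(A)\le\nu(\cnei{\epsilon}{A})+D(\alpha;\mu,\nu)+\alpha(\{d(x,y)>\epsilon\})\le\nu(\cnei{\epsilon}{A})+\epsilon$, and the symmetric inequality follows identically; hence $\prokhorov(\mu,\nu)\le\epsilon$.

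For the converse I would reduce to the classical Strassen theorem for probability measures, used in the sharp form: \emph{if $P,Q$ are Borel probability measures on a complete separable metric space and $\prokhorov(P,Q)\le\delta$, then there is a coupling $\gamma$ of $P$ and $Q$ with $\gamma(\{d(x,y)>\delta\})\le\delta$.} By the coordinate-swap symmetry of \eqref{eq:thm:Prokhorov:0} I may assume $m:=\|\mu\|\le\|\nu\|=:n$ (the case $n=0$ being trivial), and I set $\epsilon:=\prokhorov(\mu,\nu)$, which is finite; testing the Prokhorov inequality on $A=Z$ gives the key estimate $n-m\le\epsilon$. Fix $R>\epsilon$. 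Replacing $d$ by $\min(d,R)$ changes neither the topology, nor $\cnei{s}{\cdot}$ for $s\le\epsilon$, nor $\prokhorov(\mu,\nu)$, nor the set $\{d(x,y)>\epsilon\}$, so I may assume $d\le R$. Adjoin a point $\dagger$ at distance $R$ from every point of $Z$; the resulting space $Z^{+}$ is complete and separable, and, after rescaling its metric by $1/n$, it carries the probability measures $\mu^{+}:=\tfrac1n\mu+(1-\tfrac mn)\dirac{\dagger}$ and $\nu^{+}:=\tfrac1n\nu$. Using $n-m\le\epsilon$, I claim $\prokhorov(\mu^{+},\nu^{+})\le\epsilon/n$ in $Z^{+}$: for closed sets not containing $\dagger$ this is immediate from $\prokhorov(\mu,\nu)\le\epsilon$ after dividing by $n$, and for closed sets containing $\dagger$ the only inequality needing an argument reduces, after clearing denominators, to $\nu\bigl((\cnei{\epsilon}{A_0})^c\bigr)\le\mu(A_0^c)+\epsilon$ for closed $A_0\subseteq Z$, which one obtains by writing $(\cnei{\epsilon}{A_0})^c$ as the increasing union of the closed sets $\{x:d(x,A_0)\ge\epsilon+1/k\}$ — whose $\epsilon$-neighborhoods lie in $A_0^c$ — and applying the Prokhorov inequality on $Z$ to each of them.

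Now classical Strassen on $Z^{+}$ yields a coupling $\gamma$ of $\mu^{+}$ and $\nu^{+}$ with $\gamma(\{d^{+}(x,y)>\epsilon\})\le\epsilon/n$, where $d^{+}$ denotes the (unrescaled) metric on $Z^{+}$. Put $\alpha:=n\cdot\gamma|_{Z\times Z}$. Since $\gamma(Z^{+}\times\{\dagger\})=\nu^{+}(\{\dagger\})=0$, one gets $\pi_{1*}\alpha=\mu$ exactly and $\pi_{2*}\alpha\le\nu$ with $\tv{\pi_{2*}\alpha}{\nu}=n\,\gamma(\{\dagger\}\times Z)$; and because $d^{+}(\dagger,\cdot)=R>\epsilon$ while $d^{+}(\dagger,\dagger)=0$, the mass $\gamma(\{d^{+}(x,y)>\epsilon\})$ decomposes as $\tfrac1n\alpha(\{d(x,y)>\epsilon\})+\gamma(\{\dagger\}\times Z)$. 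Multiplying by $n$ gives exactly $D(\alpha;\mu,\nu)+\alpha(\{d(x,y)>\epsilon\})\le\epsilon$, i.e. \eqref{eq:thm:Prokhorov:0} with $\epsilon=\prokhorov(\mu,\nu)$, and with $\pi_{1*}\alpha=\mu$ and $\pi_{2*}\alpha\le\nu$; these marginal conditions in turn force $\alpha$ to be carried by $\supp(\mu)\times\supp(\nu)$. This is precisely~(iii); it proves the ``only if'' part of~(i) at $\epsilon=\prokhorov(\mu,\nu)$, hence at every larger $\epsilon$ as well, since enlarging $\epsilon$ only weakens \eqref{eq:thm:Prokhorov:0}. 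Combined with the first paragraph, the feasible set in \eqref{eq:thm:Prokhorov:1} is therefore exactly $[\prokhorov(\mu,\nu),\infty)$, which is~(ii).

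The routine parts are the ``if'' direction and the bookkeeping; the real content is the ``only if'' direction, and within it the main obstacle is supplying the \emph{sharp} form of classical Strassen used above — the strict condition $d(x,y)>\epsilon$ together with attainment at $\delta=\prokhorov(P,Q)$. If one only takes for granted the usual ``$\prokhorov(P,Q)<\delta\Rightarrow\exists$ coupling with $\gamma(\{d\ge\delta\})<\delta$'', the upgrade is obtained by applying it along $\delta_k\downarrow\prokhorov(P,Q)$ and extracting a weak-$*$ limit of the resulting couplings — legitimate after first reducing to compact $Z$ by tightness — using that $\gamma\mapsto\gamma(\text{open set})$ is weak-$*$ lower semicontinuous and that marginals pass to the limit. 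Alternatively one can bypass \cite{St65} entirely: reduce to compact $Z$ by tightness, then to finitely supported $\mu,\nu$, where \eqref{eq:thm:Prokhorov:0} is a finite max-flow feasibility problem whose solvability is dual to a discrete form of the Prokhorov inequalities, and finally pass to a weak-$*$ limit over finer discretizations.
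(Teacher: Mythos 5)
Your proposal is correct, but it takes a genuinely different route from the paper. The paper never invokes the classical Strassen theorem: it first proves a continuum Hall marriage lemma (Lemma~\ref{lem:Hall}, via the discrete Hall theorem, approximation by finitely supported rational measures, and weak limits), then adds an extra atom of mass $\epsilon-\delta$ to $\nu$ (where $\delta=\norm{\nu(Z)-\mu(Z)}$), applies the Hall lemma to the relation $K=\{d\le\epsilon\}\cup(Z\times\{a\})$ to obtain a sub-coupling $\gamma$ with $\pi_{1*}\gamma\le\mu$, $\pi_{2*}\gamma\le\nu$, and finally repairs the first marginal by adding the product correction $\frac{1}{\nu_1(Z)}\mu_1\otimes\nu_1$. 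You instead absorb the mass deficit on the $\mu$-side by a cemetery point, normalize both measures by $\norm{\nu(Z)}$, and rescale the metric so that the distance threshold and the mass bound in the sharp classical Strassen theorem coincide; unwinding the resulting coupling does give exactly $D(\alpha;\mu,\nu)+\alpha(\{d>\epsilon\})\le\epsilon$ with $\pi_{1*}\alpha=\mu$ and $\pi_{2*}\alpha\le\nu$ (the cemetery mass is precisely the discrepancy term), so parts (i)--(iii) follow as you argue, and your ``if'' direction matches the paper's. What your reduction buys is brevity and a transparent interpretation of $D(\alpha;\mu,\nu)$; what it costs is that the sharp form of~\cite{St65} becomes an external prerequisite, so Corollary~\ref{cor:strassen} is an input rather than a by-product as in the paper --- your two fallbacks (weak limits of couplings along $\delta_k\downarrow\prokhorov(P,Q)$, or the finitely-supported max-flow/Hall discretization followed by a weak limit) are both sound, and the second is essentially the paper's Lemma~\ref{lem:Hall} in disguise. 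Two small points of rigour, at the same level of care as the paper's own proof: the identity $(\cnei{\epsilon}{A_0})^c=\bigcup_k\{x: d(x,A_0)\ge\epsilon+1/k\}$ can fail at points whose distance to $A_0$ equals $\epsilon$ without being attained, and the hypothesis $\prokhorov(\mu,\nu)\le\epsilon$ only yields the defining inequalities for radii $s>\epsilon$; both are repaired by proving your key inequality in the form $\nu\bigl((\cnei{s}{A_0})^c\bigr)\le\mu(A_0^c)+s$ for every $s>\epsilon$ and then letting $s\downarrow\epsilon$ (which is all that is needed, since $\prokhorov(\mu^{+},\nu^{+})\le\epsilon/n$ only requires the inequalities for radii strictly above $\epsilon$) --- the same limiting device the paper itself uses.
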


\begin{proof}%
	Let $\epsilon\geq 0$ and $\alpha$ be a measure 
	satisfying~\eqref{eq:thm:Prokhorov:0}.
	We will prove that $\prokhorov(\mu,\nu)\leq\epsilon$. Let $\epsilon_1:= \tv{\pi_{1*}\alpha}{\mu}$, $\epsilon_2:=\tv{\pi_{2*}\alpha}{\nu}$ and $\delta:= \alpha(\{(x,y): d(x,y)>\epsilon \})$. Let $A\subseteq Z$ be a closed subset and $B:=\{(x,y): x\in A, d(x,y)\leq\epsilon \}$. One has $\pi_2(B)=\cnei{\epsilon}{A}$. Therefore,
	\begin{eqnarray*}
		\mu(A) &\leq& \pi_{1*}\alpha(A)+\epsilon_1 = \alpha(\pi_1^{-1}(A))+\epsilon_1\\
		&\leq & \alpha(B)+\epsilon_1+\delta \leq \alpha(\pi_2^{-1}(\cnei{\epsilon}{A}))+\epsilon_1+\delta \\
		&=& \pi_{2*}\alpha(\cnei{\epsilon}{A}))+\epsilon_1+\delta 
		\leq  \nu(\cnei{\epsilon}{A}) + \epsilon_1+\epsilon_2+\delta\\
		&\leq& \nu(\cnei{\epsilon}{A}) + \epsilon,
	\end{eqnarray*}
	where the last inequality holds by the assumption~\eqref{eq:thm:Prokhorov:0}.
	Similarly, one can show $\nu(A)\leq \mu(\cnei{\epsilon}{A})+\epsilon$. Since this holds for all $A$, one gets $\prokhorov(\mu,\nu)\leq\epsilon$.
	
	\iffalse \mar{I think $r$ can be replaced by just $\epsilon$}\fi
	Conversely, assume $\prokhorov(\mu,\nu)\leq\epsilon$. One can assume $\nu(Z)=\mu(Z)+\delta$ and $\delta\geq 0$ without loss of generality. Let $r>\epsilon$ be arbitrary. The former assumption implies that $\nu(A)\leq \mu(\cnei{r}{A})+r$ for every closed set $A\subseteq Z$. It follows that 
	\begin{equation}
	\label{eq:thm:Prokhorov:2}
	\nu(Z\setminus A)\geq \mu(Z\setminus \cnei{r}{A}) -r +\delta.
	\end{equation}
	Let $B\subseteq Z$ be an arbitrary closed subset, $s>r$ be arbitrary and $A$ be the closure of $Z\setminus \cnei{s}{B}$. %
	Note that $Z\setminus A\subseteq \cnei{s}{B}$ and $Z\setminus \cnei{r}{A}\supseteq B$. It follows from~\eqref{eq:thm:Prokhorov:2} that $\nu(\cnei{s}{B}) \geq \mu(B)-r+\delta$. By letting $r$ and $s$ tend to $\epsilon$ and by $\cap_{s>\epsilon}{\cnei{s}{B}}=B^{\epsilon}$, one gets that
	\[\mu(B)\leq \nu(\cnei{\epsilon}{B}) + \epsilon-\delta,\] 
	for all closed sets $B\subseteq Z$.
	Now, add a point $a$ to $Z$, let $Z'=Z\cup\{a\}$ and let 
	$\nu':=\nu+(\epsilon-\delta)\dirac{a}$, which is a measure on $Z'$.
	Let $K:=\{(x,y)\in Z\times Z:  d(x,y)\leq\epsilon\} \cup (Z\times \{a\})$. Then, for any closed subset $A\subset Z$, one has $\mu(A)\leq \nu'(\{y\in Z': \exists x\in A: (x,y)\in K \})$. Therefore, by Lemma~\ref{lem:Hall} below, one finds a measure $\beta$ on $K$ such that $\pi_{1*}\beta=\mu$ and $\pi_{2*}\beta\leq \nu'$. Let $\gamma$ be the restriction of $\beta$ to $Z\times Z$. One has $\pi_{1*}\gamma\leq \mu$ and $\pi_{2*}\gamma\leq \nu$. Let $\mu_1:=\mu- \pi_{1*}\gamma$ and $\nu_1:=\nu-\pi_{2*}\gamma$. 
	The assumption $\mu(Z)\leq \nu(Z)$ implies that $\mu_1(Z)\leq \mu_1(Z)$. Therefore, if $\nu_1=0$, then $\mu_1=0$ and $\gamma$ has the desired properties. So, assume $\nu_1\neq 0$. Also, one can obtain $||\mu_1|| \leq \nu'(a) = \epsilon - \delta$. Define
	\begin{equation}
	\label{eq:thm:Prokhorov:3}
	\alpha:=\gamma + \frac 1{\nu_1(Z)} \mu_1 \otimes \nu_1.
	\end{equation}
	We claim that $\alpha$ satisfies the desired properties.
	It is straightforward that $\pi_{1*}\alpha = \mu$ and $\pi_{2*}\alpha\leq \nu$.
	This implies that $D(\alpha;\mu,\nu) = ||\nu||-||\pi_{2*}\alpha|| = ||\nu|| - ||\mu|| = \delta$. Also, since $\gamma$ is supported on $K$, \eqref{eq:thm:Prokhorov:3} implies that 
	\[
	\alpha(K^c)\leq ||\frac 1{\nu_1(Z)} \mu_1 \otimes \nu_1|| = ||\mu_1|| \leq \epsilon-\delta.
	\]
	Therefore, $D(\alpha;\mu,\nu)+ \alpha(K^c) \leq \epsilon$. So, $\alpha$ satisfies~\eqref{eq:thm:Prokhorov:0}. Finally, it can be seen that $\alpha$ is supported on $\supp(\mu)\times \supp(\nu)$ and the claim is proved.
\end{proof}

It is shown below how Theorem~\ref{thm:strassen} implies Strassen's theorem~\cite{St65}.

\begin{corollary}[Strassen's Theorem]
	\label{cor:strassen}
	Let $\mu$ and $\nu$ be finite Borel measures on $Z$ %
	such that $\mu(Z)=\nu(Z)$. Then, there exists a coupling $\alpha$ of $\mu$ and $\nu$ such that
	\begin{equation}
	\label{eq:strassen}
	\alpha(\{(x,y): d(x,y)>\epsilon \}) \leq \epsilon,
	\end{equation}
	where $\epsilon:=\prokhorov(\mu,\nu)$.
\end{corollary}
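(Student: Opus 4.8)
The plan is to deduce this directly from part~\eqref{thm:Prokhorov:2} of Theorem~\ref{thm:strassen}. Since $\mu(Z)=\nu(Z)$, in particular $\mu(Z)\leq\nu(Z)$, so that part applies with $\epsilon:=\prokhorov(\mu,\nu)$ and produces a Borel measure $\alpha$ on $Z\times Z$ satisfying $D(\alpha;\mu,\nu)+\alpha(\{(x,y):d(x,y)>\epsilon\})\leq\epsilon$, together with $\pi_{1*}\alpha=\mu$ and $\pi_{2*}\alpha\leq\nu$.

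The key observation is that equality of total masses upgrades $\pi_{2*}\alpha\leq\nu$ to $\pi_{2*}\alpha=\nu$. Indeed, $||\pi_{2*}\alpha||=||\alpha||=||\pi_{1*}\alpha||=||\mu||=||\nu||$, and a (finite) sub-measure of $\nu$ with the same total mass as $\nu$ must coincide with $\nu$. Hence $\alpha$ is a coupling of $\mu$ and $\nu$, and so $D(\alpha;\mu,\nu)=0$.

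Plugging $D(\alpha;\mu,\nu)=0$ into the inequality coming from Theorem~\ref{thm:strassen} leaves $\alpha(\{(x,y):d(x,y)>\epsilon\})\leq\epsilon$, which is exactly~\eqref{eq:strassen}. There is essentially no obstacle here; the only point worth making explicit is the passage from $\pi_{2*}\alpha\leq\nu$ to equality, which relies on the finiteness of the measures. (Alternatively one could invoke part~\eqref{thm:Prokhorov:1} and absorb the discrepancy term by hand, but using part~\eqref{thm:Prokhorov:2} is the cleanest route.)
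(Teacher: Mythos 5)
Your proposal is correct and follows essentially the same route as the paper: invoke part~(iii) of Theorem~\ref{thm:strassen} with $\epsilon:=\prokhorov(\mu,\nu)$, use the equality of total masses to upgrade $\pi_{2*}\alpha\leq\nu$ to $\pi_{2*}\alpha=\nu$, and conclude that $D(\alpha;\mu,\nu)=0$, leaving exactly~\eqref{eq:strassen}. Your explicit total-mass computation justifying $\pi_{2*}\alpha=\nu$ is a detail the paper leaves implicit, but the argument is the same.
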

\begin{proof}
	Let $\alpha$ be the measure in part~\eqref{thm:Prokhorov:2} of Theorem~\ref{thm:strassen} for $\epsilon:=\prokhorov(\mu,\nu)$. One has $\pi_{1*}\alpha = \mu$ and $\pi_{2*}\alpha\leq \nu$. The assumption $\mu(Z)=\nu(Z)$ implies that $\pi_{2*}\alpha=\nu$. So, $\alpha$ is a coupling of $\mu$ and $\nu$ and $D(\alpha;\mu,\nu)=0$. Since the infimum in~\eqref{eq:thm:Prokhorov:1} is attained at $\alpha$, one has $\alpha(\{(x,y): d(x,y)>\epsilon \}) \leq \epsilon$ and the claim is proved.
\end{proof}

\iffalse

THIS COROLLARY IS WRONG!
Counter example: $\mu=\dirac{0}$ and $\nu=\frac 3 2 \dirac{1}$.
\begin{corollary}
	Let $\mu$ and $\nu$ be finite Borel measures on $Z$ such that $||\mu||\leq ||\nu||$. Then, there  exists $\nu'\leq \nu$ such that $||\nu'||=||\mu||$ and
	\begin{eqnarray*}
		\prokhorov(\mu,\nu) &=& \prokhorov(\mu,\nu') + \prokhorov(\nu',\nu) \\
		&=& 	\prokhorov(\mu,\nu') + ||\nu||-||\mu||.
	\end{eqnarray*}
	%
	%
	%
\end{corollary}
\fi

\begin{remark}
	A variant of the Prokhorov metric is defined in~\cite{AdBrGoMi17} by a formula similar to~\eqref{eq:thm:Prokhorov:1} (by changing the $+$  to $\vee$ in~\eqref{eq:thm:Prokhorov:1}). This definition, although not identical to the classical Prokhorov metric~\eqref{eq:dProkhorov}, 
	only differs by a factor at most 2, and hence, generates the same topology.
\end{remark}

The following lemma is used in the proof of Theorem~\ref{thm:strassen}. It is a continuum version of Hall's marriage theorem and also generalizes Theorem~11.6.3 of~\cite{bookDu02}.
\begin{lemma}
	\label{lem:Hall}
	Let $X$ and $Y$ be separable metric spaces and $\mu$ and $\nu$ be finite Borel measures on $X$ and $Y$ respectively. Assume $K\subseteq X\times Y$ is a closed subset such that for every closed set $A\subseteq X$, one has $\mu(A)\leq \nu(K(A))$, where $K(A):=\{y\in Y: \exists x\in A: (x,y)\in K \}$. Then there is a Borel measure $\alpha$ on $K$ such that $\pi_{1*}\alpha=\mu$ and $\pi_{2*}\alpha\leq \nu$.
\end{lemma}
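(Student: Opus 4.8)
The plan is to reduce to the classical Hall marriage theorem (or rather its measure-theoretic refinement, Theorem 11.6.3 of \cite{bookDu02}) by a discretization-and-compactness argument, together with a max-flow/min-cut style perturbation to handle the inequality $\pi_{2*}\alpha \le \nu$ rather than equality.

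Step 1: Reduction to the equal-mass case. If $\|\mu\| = \|\nu\|$, the condition $\mu(A) \le \nu(K(A))$ for all closed $A$ is exactly the hypothesis under which one wants a coupling supported on $K$; I would first establish the lemma in this case and then derive the general case by noting that if $\|\mu\| < \|\nu\|$, the set $K' := K \cup (X \times \{y_0\})$ for an added isolated point $y_0$ with $\nu' := \nu + (\|\nu\|-\|\mu\|)... $ no — more simply, one can \emph{shrink} $\nu$: I would argue that there exists $\nu' \le \nu$ with $\|\nu'\| = \|\mu\|$ still satisfying $\mu(A) \le \nu'(K(A))$ for all closed $A$, and then apply the equal-mass case to $(\mu,\nu',K)$ to get a coupling $\alpha$ with $\pi_{1*}\alpha = \mu$, $\pi_{2*}\alpha = \nu' \le \nu$. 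The existence of such $\nu'$ is itself a min-cut type statement, so it may be cleaner to instead prove the inequality version directly; see Step 3.

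Step 2: Discretization. Assume first $X$ and $Y$ are compact (the general separable case follows by an exhaustion argument: write $X = \bigcup X_n$, $Y = \bigcup Y_n$ with $X_n, Y_n$ relatively compact, solve on finite pieces, and extract a weakly convergent subsequence of the $\alpha_n$ using tightness — the masses are bounded by $\|\mu\|$). For compact $X,Y$, fix $\delta > 0$ and take finite Borel partitions $\{X_i\}$ of $X$ and $\{Y_j\}$ of $Y$ into sets of diameter $< \delta$. Form the bipartite graph on $\{i\} \cup \{j\}$ with an edge $i \sim j$ when $X_i \times Y_j$ meets a $\delta$-neighborhood of $K$, and weights $\mu(X_i)$, $\nu(Y_j)$. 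The Hall-type condition on $K$ must be transferred to this graph: for any set $S$ of indices $i$, $\bigcup_{i \in S} X_i$ is contained in a closed set $A$ with $K(A) \subseteq \bigcup_{j : \exists i \in S, i \sim j} (\delta\text{-nbhd of } Y_j)$, giving $\sum_{i \in S}\mu(X_i) \le \nu(K(A)) \le \sum_{j \in N(S)} \nu(Y_j) + o(1)$. Apply the finite (weighted) Hall theorem / max-flow--min-cut to get a measure $\alpha_\delta$ on $X \times Y$, supported within distance $O(\delta)$ of $K$, with $\pi_{1*}\alpha_\delta = \mu$ and $\pi_{2*}\alpha_\delta \le \nu$ (up to $o(1)$ corrections absorbed by slightly enlarging $\nu$ or discarding a small mass).

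Step 3: Passing to the limit. As $\delta \to 0$, the family $\{\alpha_\delta\}$ is tight (masses bounded, marginals fixed/dominated), so a subsequence converges weakly to some $\alpha$. Since $\alpha_\delta$ is supported on $\{d((x,y), K) \le O(\delta)\}$ and $K$ is closed, the limit $\alpha$ is supported on $K$ by the Portmanteau theorem. Weak convergence of marginals gives $\pi_{1*}\alpha = \mu$; for the other marginal, weak convergence together with $\pi_{2*}\alpha_\delta \le \nu + o(1)$ yields $\pi_{2*}\alpha \le \nu$ (the relation $\le$ passes to weak limits for measures on a fixed space, testing against nonnegative continuous functions). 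The main obstacle I anticipate is the bookkeeping in Step 2 — carefully arranging the discretized Hall condition so that the $\delta$-fattening of $K(A)$ does not destroy the inequality, and controlling the $o(1)$ mass defects uniformly — rather than any conceptual difficulty; the compactness/weak-limit machinery of Step 3 and the reduction in Step 1 are routine once Step 2 is set up correctly. An alternative to Steps 2--3 that avoids discretization would be to invoke an abstract existence result for measures with prescribed marginals on a closed set (a version of Strassen's or Kellerer's marginal theorem), but since the paper wants a self-contained and elementary treatment, the Hall-theorem route seems preferable.
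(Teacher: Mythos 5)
The compact-space core of your argument is sound, and it is in essence the same strategy as the paper's: reduce to the finite weighted Hall theorem (max-flow--min-cut), keep the approximating measures supported near $K$, and pass to a weak limit, using closedness of $K$ for the support and nonnegative continuous test functions for the marginal inequality. In fact the bookkeeping you worry about in Step 2 is not needed: if an edge $i\sim j$ is declared when $X_i\times Y_j$ meets the $\delta$-neighbourhood of $K$ and one takes $A$ to be the closure of $\bigcup_{i\in S}X_i$, then every $y\in K(A)$ lies in some $Y_j$ with $j\in N(S)$, so $\sum_{i\in S}\mu(X_i)\leq\mu(A)\leq\nu(K(A))\leq\sum_{j\in N(S)}\nu(Y_j)$ holds exactly, with no $o(1)$ corrections. (Your Step 1 detour through the equal-mass case can simply be dropped, since the flow formulation already produces $\pi_{1*}\alpha=\mu$, $\pi_{2*}\alpha\leq\nu$ directly.)

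The genuine gap is the reduction of the general separable case to the compact case, which you dismiss in one parenthetical. First, a separable metric space need not be $\sigma$-compact (an infinite-dimensional separable Banach space, or even the irrationals, admits no exhaustion by relatively compact sets), and tightness of $\mu,\nu$ is not automatic without completeness, so ``write $X=\bigcup X_n$, $Y=\bigcup Y_n$ relatively compact'' is not available in the stated generality. Second, and more seriously, even when compact sets $C_1\subseteq X$, $C_2\subseteq Y$ carrying all but $\epsilon$ of the mass exist, the hypothesis does not restrict: for $A\subseteq C_1$ the witnesses in $K(A)$ may carry most of their $\nu$-mass outside $C_2$, so the Hall condition on $C_1\times C_2$ can fail by a fixed amount and ``solve on finite pieces'' is not defined. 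One needs an explicit slack device to restore the Hall condition for the truncated or discretized data; this is exactly what the paper does, approximating $\mu,\nu$ by finitely supported rational-valued measures within Prokhorov distance $1/m$, fattening $K$ by $1/m$, and adjoining an auxiliary point $a$ to $Y$ carrying mass $2/m$ (connected to all of $X$) so that the modified Hall condition holds exactly; the spurious mass at $a$ disappears in the weak limit because $K$ is closed and disjoint from $X\times\{a\}$. Without such a mechanism your Steps 2--3 prove the lemma only for compact (or totally bounded) $X,Y$, which is not enough for its use in Theorem~\ref{thm:strassen}, where the ambient space is an arbitrary complete separable metric space. Relatedly, bounded total mass alone does not give tightness of $\{\alpha_\delta\}$ on a non-compact product; you need uniform tightness inherited from (dominated) tight marginals, which again points to handling the non-compact case by modifying the data rather than by truncating the spaces.
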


\begin{proof}
	If $\mu$ and $\nu$ have finite supports and integer values, then the claim follows easily from Hall's marriage theorem (to show this, by splitting the atoms of $\mu$ and $\nu$ into finitely many points, one can reduce the problem to the case where every atom has measure one). 
	By scaling, the same holds if $\mu$ and $\nu$ have finite supports and rational values. 
	Note that such measures are dense in the set of finite measures (see e.g., Lemma~4.5 in~\cite{bookKa17randommeasures}).  %
	
	Now, let $\mu$ and $\nu$ be arbitrary measures that satisfy the assumptions of the lemma. By the above arguments, there exist sequences $(\mu_n)_n$ and $(\nu_n)_n$ of finite measures on $X$ and $Y$ respectively that converge weakly to $\mu$ and $\nu$ respectively and every $\mu_n$ or $\nu_n$ has finite support and rational values. So the claim holds for $\mu_n$ and $\mu_n$ for each $n$. 
	For $m\in\mathbb N$, one can find $n=n(m)$ such that $\prokhorov(\mu_n,\mu)<\frac 1 m$ and $\prokhorov(\nu_n,\nu)<\frac 1 m$. Add a point $a$ to $Y$ and define $\nu'_n:=\nu_n+ \frac 2 m \delta_a$ and 
	\[
	K_{m}:= \{(x,y): \exists (x',y')\in K: d(x,x')\leq \frac 1 m, d(y,y')\leq \frac 1 m \} \cup (X\times \{a\}).
	\]
	Therefore, for any closed set $A\subseteq X$, one has
	\begin{eqnarray*}
		\mu_n(A) &\leq &\mu(\cnei{1/m}{A}) + \frac 1 m
		\leq \nu(K(\cnei{1/m}{A})) + \frac 1 m\\
		&\leq& \nu_n
		(\cnei{1/m}{K(\cnei{1/m}{A})})
		+\frac 2 m = \nu'_n(K_{m}(A)),
	\end{eqnarray*}
	where $K_{m}(A)\subseteq Y\cup\{a\}$ is defined similarly to $K(A)$. 
	Note that $\mu_n$ and $\nu'_n$ have finite supports and rational values. So the claim of the lemma holds for them. Therefore, 
	one can find a Borel measure $\alpha_{m}$ on $K_{m}$ such that $\pi_{1*}\alpha_{m}=\mu_n$ and $\pi_{2*}\alpha_{m}\leq \nu'_n$. By the finiteness of $\mu$ and $\nu$, it is easy to see that the set of measures $\alpha_{m}$ is tight. So one finds a convergent subsequence of $\alpha_{m}$'s, say converging weakly to $\alpha$. Since the sets $K_{m}$ are closed and nested, it can be seen that $\alpha$ is supported on $K_{m}$ for any $m$, and hence, it is supported on $\cap_{m} K_{m} = K\cup (X\times\{a\})$. Moreover, since $\alpha_{m}(X\times\{a\})\leq \frac 2 m$, $X\times\{a\}$ is disjoint from $K$ and $K$ is closed, it follows that $\alpha$ is supported on $K$ only. Finally, by $\pi_{1*}\alpha_{m} = \mu_n$ and $\pi_{2*}\alpha_{m}\leq \nu_n+\frac 2 m \delta_a$, one can get $\pi_{1*}\alpha = \mu$ and $\pi_{2*}\alpha\leq \nu$. So, the claim is proved.
\end{proof}

\section{The Gromov-Hausdorff-Prokhorov Metric}
\label{sec:ghp}

This section presents the main contribution of the paper. Roughly speaking, the Gromov-Hausdorff metric and the Gromov-Hausdorff-Prokhorov metric are generalized to the non-compact case (Subsection~\ref{subsec:ghp-nonCompact}); and more precisely, to boundedly-compact pointed (measured) metric spaces. Here, no further restrictions on the metric spaces are needed (e.g., being a length space or a discrete space as in~\cite{AbDeHo13} and~\cite{dimensionI} respectively). As mentioned in the introduction, this provides a metrization of the Gromov-Hausdorff (-Prokhorov) topology, where the latter has been defined earlier in the literature. In addition, completeness, separability, pre-compactness and weak convergence of probability measures are studied for the Gromov-Hausdorff (-Prokhorov) metric. Moreover, in the compact case, a Strassen-type theorem is proved for the Gromov-Hausdorff-Prokhorov metric.

Since the Gromov-Hausdorff metric is a special case of the Gromov-Hausdorff-Prokhorov metric (by considering metric spaces equipped with the zero measure), only the latter is discussed in this section. If the reader is interested in the Gromov-Hausdorff metric only, he or she can assume that all of the measures in this section are equal to zero (except in Subsection~\ref{subsec:weak}). 
Further discussion is provided in Subsection~\ref{subsec:GH}.

\subsection{Pointed Measured Metric (\pmm{}) Spaces}
\label{subsec:pmmSpace}

This subsection provides the basic definitions and properties regarding (measured) metric spaces.
Given metric spaces $X$ and $Z$, a function $f:X\to Z$ is an \textbf{isometric embedding} if it preserves the metric; i.e., $d(f(x_1),f(x_2))=d(x_1,x_2)$ for all $x_1,x_2\in X$. It is an \textbf{isometry} if it is a surjective isometric embedding. 
For a metric space $X$, $x\in X$ and $r\geq 0$, let
\begin{eqnarray*}
	\oball{r}{x}:= \oball{r}{X,x}&:=& \{y\in X: d(x,y)<r \},\\
	\cball{r}{x}:= \cball{r}{X,x}&:=& \{y\in X: d(x,y)\leq r \}.
\end{eqnarray*}
The set $\oball{r}{x}$ (resp. $\cball{r}{x}$) is called the \defstyle{open ball} (resp. \defstyle{closed ball}) of radius $r$ centered at $x$. Note that $\cball{r}{x}$ is closed, but is not necessarily the closure of $\oball{r}{x}$ in $X$. The metric space $X$ is \defstyle{boundedly compact} if every closed ball in $X$ is compact.

The rest of the paper is focused on \defstyle{pointed metric spaces}, abbreviated by \defstyle{\pms{} spaces} (Remark~\ref{rem:nonpointed} explains the non-pointed case). Such a space is a pair $(X,o)$, where $X$ is a metric space and $o$ is a distinguished point of $X$ called the \defstyle{\rooot} (or the \defstyle{origin}). %
A \defstyle{pointed measured metric space}, abbreviated by a \defstyle{\pmm{} space}, is a tuple $\mathcal X=(X,o,\mu)$ where $X$ is a metric space, %
$\mu$ is a non-negative Borel measure on $X$ and $o$ is a distinguished point of $X$. The balls centered at $o$ in $\mathcal X$ form other \pmm{} spaces as follows:

\begin{eqnarray*}
	\poball{\mathcal X}{r}&:=&\left(\oball{r}{o}, o, \restrict{\mu}{\oball{r}{o}} \right),\\
	\pcball{\mathcal X}{r}&:=&\left(\cball{r}{o}, o, \restrict{\mu}{\cball{r}{o}} \right).
\end{eqnarray*}

\begin{convention}
	All measures in this paper are Borel measures.
	A \pmm{} space $\mathcal X=(X,o,\mu)$ is called \defstyle{compact} if $X$ is compact and $\mu$ is a finite measure. Also, $\mathcal X$ is called \defstyle{boundedly compact} if $X$ is boundedly compact and $\mu$ is \defstyle{boundedly finite}; i.e., every ball in $X$ has finite measure under $\mu$.  
\end{convention}

%
%
\iffalse
\sout{(\ater: Do we need this? If not, move it to Section~\ref{sec:metric}) If $o\in Y\subseteq X$, then $(Y,o)$ (or $Y$?) denotes the \pmm{} space equipped with the naturally induced metric and measure from $X$}
\fi

%
A \defstyle{pointed isometry} $\rho:(X,o)\to (X',o')$ between two \pms{} spaces $(X,o)$ and $(X',o')$ is an isometry $\rho:X\rightarrow X'$ such that $\rho(o)=o'$.
A \defstyle{GHP-isometry} between two \pmm{} spaces $(X,o,\mu)$ and $(X',o',\mu')$ is a pointed isometry $\rho:(X,o)\rightarrow (X',o')$ such that $\rho_*\mu=\mu'$. 
If there exists a GHP-isometry between $(X,o,\mu)$ and $(X',o',\mu')$, then they are called \defstyle{GHP-isometric}. %

Let $\mstar$ be the set of equivalence classes of boundedly compact \pms{} spaces under pointed isometries\footnote{The $*$ sign stands for {`pointed'} and is included in the symbol mainly for compatibility with the literature.}. Define $\mcstar$ similarly by considering only compact spaces.
Also, let $\mmstar$ be the set of equivalence classes of boundedly compact \pmm{} spaces under GHP-isometries and define $\mmcstar$ similarly by considering only compact \pmm{} spaces. %
It can be seen that they are indeed sets.
%
%
%
\iffalse
\mar{\ater: Move this to the intro of the section?}
In this section, a generalization of the \textit{Gromov-hausdorff-Prokhorov metric} is defined, abbreviated by \textit{the GHP metric}, which turns these sets into complete separable metric spaces.   
%
The heuristic is that two \pmm{} spaces are \textit{close} if two \textit{large} compact portions of the two spaces are \textit{approximately GHP-isometric} to each other, where being large means containing a \textit{large} ball centered at the \rooot{}. 
\fi

\del{\begin{remark}
	\label{rem:GHspecialcase}
	Every pointed metric space $(X,o)$ can be regarded as a \pmm{} space by equipping it with the zero measure on $X$. So $\mstar$ can be identified with the subset $\{(X,o,\mu)\in\mmstar: \mu=0 \}$ of $\mmstar$. Similarly, $\mcstar$ can be identified with a subset of $\mmcstar$. 
	\\
	If the reader is interested only in pointed metric spaces rather than \pmm{} spaces, he or she can restrict the definitions to the subset $\{(X,o,\mu)\in\mmstar: \mu=0 \}$ of $\mmstar$ to simplify the arguments.
\end{remark}}

\begin{lemma}
	\label{lem:cadlag}
	Let $\mathcal X=(X,o,\mu)$ be a boundedly-compact \pmm{} space.
	\begin{enumerate}[(i)]
		\item The curve $t\mapsto \cball{t}{o}$ is c\`adl\`ag under the Hausdorff metric and its left limit at $t=r$ is the closure of $\oball{r}{o}$.
		\item The curve $t\mapsto \restrict{\mu}{\cball{t}{o}}$ is c\`adl\`ag under the Prokhorov metric and its left limit at $t=r$ is $\restrict{\mu}{\oball{r}{o}}$.
	\end{enumerate}
\end{lemma}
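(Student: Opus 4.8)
The plan is to establish right-continuity and existence of left limits separately for each of the two curves, exploiting bounded compactness throughout so that all balls are compact and all relevant measures are finite; thus the Hausdorff and Prokhorov metrics are genuinely metrics on the objects in question. Fix $r>0$ and set $K_t := \cball{t}{o}$, $B := \oball{r}{o}$, and $\overline B$ its closure in $X$.

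For part (i), the key monotonicity is $K_s \subseteq K_t$ for $s\le t$, so $\hausdorff(K_s,K_t) = \sup\{ d(x,K_s) : x\in K_t\}$ and this quantity is what I must control. For right-continuity at $t=r$: I claim $\hausdorff(K_{r+\epsilon}, K_r)\to 0$ as $\epsilon\downarrow 0$. Indeed $\hausdorff(K_{r+\epsilon},K_r) \le \epsilon$, because any $x\in K_{r+\epsilon}$ lies within distance $\epsilon$ of $o\in K_r$ only if $d(o,x)\le\epsilon$; more carefully, if $d(o,x)\le r+\epsilon$ then taking the point on a near-geodesic is not available (no length-space assumption!), so instead I argue directly: any $x$ with $d(o,x)\le r+\epsilon$ satisfies $d(o,x)\le r$ or $r < d(o,x)\le r+\epsilon$; in the latter case $o$ itself is in $K_r$ and $d(x,o)\le r+\epsilon$, which does not give the bound. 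So the correct bound must instead use that $x\in K_{r+\epsilon}\setminus K_r$ has $d(x, K_r)\le d(x,\{y : d(o,y)=? \})$ — this is the subtle point. The right fix: for $x$ with $r<d(o,x)\le r+\epsilon$, I do not get a point of $K_r$ close to $x$ from the metric axioms alone, so right-continuity of $K_t$ genuinely needs a compactness/contradiction argument: if $\hausdorff(K_{r+\epsilon_n},K_r)\ge\delta$ for $\epsilon_n\downarrow 0$, pick $x_n\in K_{r+\epsilon_n}$ with $d(x_n,K_r)\ge\delta$; all $x_n\in K_{r+1}$ which is compact, so a subsequence converges to some $x_\infty$ with $d(o,x_\infty)\le r$ (since $d(o,x_n)\le r+\epsilon_n$), hence $x_\infty\in K_r$, contradicting $d(x_n,K_r)\ge\delta$ along the subsequence. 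For the left limit at $t=r$: I claim $K_{r-\epsilon}\to \overline B$ in $\hausdorff$ as $\epsilon\downarrow 0$. Clearly $\bigcup_{\epsilon>0} K_{r-\epsilon} = B$, so $\overline{\bigcup_\epsilon K_{r-\epsilon}} = \overline B$; combined with the nesting $K_{r-\epsilon}\subseteq\overline B$ and a compactness argument identical in spirit to the one above (any $x\in\overline B$ is a limit of points $x_k$ with $d(o,x_k)<r$, so $x_k\in K_{r-\epsilon}$ for $\epsilon$ small, giving $d(x,K_{r-\epsilon})\to 0$ uniformly by compactness of $\overline B\subseteq K_r$), I conclude $\hausdorff(K_{r-\epsilon},\overline B)\to 0$. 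Right-continuity at a general $t$ is the same argument with $r$ replaced by $t$; left limits at a general $t$ likewise. This handles (i).

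For part (ii), write $\mu_t := \restrict{\mu}{K_t}$, a finite measure since $\mathcal X$ is boundedly compact. Monotonicity again: $\mu_s\le\mu_t$ for $s\le t$, with $\mu_t - \mu_s = \restrict{\mu}{K_t\setminus K_s}$. The total variation distance satisfies $\tv{\mu_s}{\mu_t} = \mu(K_t\setminus K_s) = \mu(\cball{t}{o}) - \mu(\cball{s}{o})$, and since $\prokhorov\le\tv{\cdot}{\cdot}$ (an elementary comparison: if $A$ and all sets are allowed, $\tv{\cdot}{\cdot}$ is the "$\epsilon=0$ neighborhood" bound), it suffices to show $s\mapsto \mu(\cball{s}{o})$ is c\`adl\`ag as a real-valued function. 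Right-continuity: $\mu(\cball{t+\epsilon}{o})\downarrow \mu(\bigcap_\epsilon \cball{t+\epsilon}{o}) = \mu(\cball{t}{o})$ by continuity of the finite measure $\mu$ from above along the nested compact sets (finiteness on a neighborhood ball justifies this). Left limit at $t=r$: $\mu(\cball{r-\epsilon}{o})\uparrow \mu(\bigcup_\epsilon \cball{r-\epsilon}{o}) = \mu(\oball{r}{o})$ by continuity from below, so the left limit is $\mu(\oball{r}{o})$, and moreover $\mu_{r-\epsilon}\to \restrict{\mu}{\oball{r}{o}}$ in total variation (hence in Prokhorov), since $\tv{\mu_{r-\epsilon}}{\restrict{\mu}{\oball{r}{o}}} = \mu(\oball{r}{o}\setminus \cball{r-\epsilon}{o})\to 0$. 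This identifies the left limit as claimed.

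The main obstacle is the right-continuity in part (i): without a length-space or geodesic assumption one cannot produce, for a point $x$ with $t < d(o,x)\le t+\epsilon$, a nearby point of $\cball{t}{o}$ by "pulling $x$ along a geodesic," so the naive estimate $\hausdorff(K_{t+\epsilon},K_t)\le\epsilon$ fails in general. The remedy is exactly the bounded-compactness hypothesis: it forces $K_{t+1}$ to be compact, and a standard subsequential-limit argument then yields $\hausdorff(K_{t+\epsilon},K_t)\to 0$ as $\epsilon\downarrow 0$ even though no explicit modulus is available. Everything in part (ii) reduces to monotone convergence of a finite measure along nested compact sets, which is routine once one observes $\prokhorov\le\tv{\cdot}{\cdot}$ and the total masses on a fixed larger ball are finite.
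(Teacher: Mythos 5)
Your proposal is correct and takes essentially the same route as the paper: the paper likewise deduces right-continuity and the left limits from compactness of the closed balls (your subsequence/Dini argument is exactly what the paper leaves implicit as ``straightforward'') together with monotone continuity of the finite measure, bounding the Prokhorov distance by the mass difference just as you do via total variation. Your explicit observation that the naive estimate $\hausdorff(\cball{r+\epsilon}{o},\cball{r}{o})\leq\epsilon$ fails without a length-space assumption correctly identifies why the compactness hypothesis is needed.
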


In fact, it will be seen that the curve $t\mapsto\pcball{\mathcal X}{t}$ is c\`adl\`ag under the Gromov-Hausdorff-Prokhorov metric (see Lemma~\ref{lem:cadlag2}).

\begin{proof}
	Let $r\geq 0$ and $\epsilon>0$. By compactness of the balls, it is straightforward to show that there exists $\delta>0$ such that 
	\[
	\cnei{\epsilon}{\cball{r}{o}}\supseteq \cball{r+\delta}{o},\quad 
	\mu(\cball{r}{o})+\epsilon\geq \mu(\cball{r+\delta}{o}).
	\]
	This implies that 
	\[
	\hausdorff(\cball{r}{o},\cball{r+\delta}{o})\leq \epsilon,\quad
	\prokhorov(\restrict{\mu}{\cball{r}{o}}, \restrict{\mu}{\cball{r+\delta}{o}})\leq\epsilon.
	\]
	It follows that the curves $t\mapsto \cball{t}{o}$ and $t\mapsto \restrict{\mu}{\cball{t}{o}}$ are right-continuous. Similarly, one can see that $\delta$ can be chosen such that 
	\[
	\cnei{\epsilon}{\cball{r-\delta}{o}}\supseteq \overline{\oball{r}{o}},\quad 
	\mu(\cball{r-\delta}{o})+\epsilon\geq \mu(\oball{r}{o}).
	\]
	Since $\cball{r-\delta}{o}\subseteq \overline{\oball{r}{o}}$, it follows that
	\[
	\hausdorff(\overline{\oball{r}{o}},\cball{r-\delta}{o})\leq \epsilon,\quad
	\prokhorov(\restrict{\mu}{\oball{r}{o}}, \restrict{\mu}{\cball{r-\delta}{o}})\leq\epsilon.
	\]
	This shows that the left limits of the curves are as desired and the claim is proved.
\end{proof}

\begin{definition}
	\label{def:continuityRadius}
	Let \unwritten{It can be seen that continuity radii under HP are exactly continuity radii under GHP.}
	$\mathcal X=(X,o,\mu)$ be a boundedly-compact \pmm{} space. A real number $r>0$ is called a \defstyle{continuity radius} for $\mathcal X$ if 
	$\cball{r}{o}$ is the closure of $\oball{r}{o}$ in $X$ and $\mu\left(\cball{r}{o}\setminus\oball{r}{o}\right)=0$. 
	Otherwise, it is called a \defstyle{discontinuity radius} for $\mathcal X$.
	Equivalently, $r$ is a continuity radius for $\mathcal X$ if and only if the curves $t\mapsto\cball{t}{o}$ and $t\mapsto \restrict{\mu}{\cball{t}{o}}$ (equivalently, the curve $t\mapsto\pcball{\mathcal X}{t}$) are continuous at $t=r$.
\end{definition}

\begin{lemma}
	\label{lem:continuityRadius}
	Every boundedly-compact \pmm{} space has at most countably many discontinuity radii.
\end{lemma}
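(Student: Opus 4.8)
The plan is to show that for each fixed boundedly-compact PMM space $\mathcal X=(X,o,\mu)$, the set of discontinuity radii is countable. By Definition~\ref{def:continuityRadius}, $r>0$ is a discontinuity radius precisely when at least one of the two curves $t\mapsto\cball{t}{o}$ (under $\hausdorff$) or $t\mapsto\restrict{\mu}{\cball{t}{o}}$ (under $\prokhorov$) fails to be continuous at $t=r$. By Lemma~\ref{lem:cadlag} both curves are c\`adl\`ag on $(0,\infty)$. So it suffices to invoke the standard fact that a c\`adl\`ag curve with values in a metric space has at most countably many discontinuity points; applying this to each of the two curves and taking the union of the two (countable) discontinuity sets gives the claim.

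The key steps, in order, are as follows. First, fix any $T>0$ and restrict attention to $t\in(0,T]$; it is enough to bound the discontinuities on each such interval since $(0,\infty)=\bigcup_{n} (0,n]$. Second, recall the general lemma: if $\gamma:(0,T]\to M$ is c\`adl\`ag into a metric space $M$, then for each $\eta>0$ the set $\{t:\text{the jump of }\gamma\text{ at }t\text{ has size}\ge\eta\}$ is finite — otherwise one could extract a monotone sequence $t_k$ of such jump times converging to some $t_\infty$, and then the existence of a left limit at $t_\infty$ (if $t_k\uparrow t_\infty$) or a right limit (if $t_k\downarrow t_\infty$) would be contradicted by the persistent $\eta$-sized oscillations. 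Consequently the set of all jump times is a countable union (over $\eta=1/m$) of finite sets, hence countable. Third, apply this to $\gamma_1(t)=\cball{t}{o}$ in $(\mathcal F(X),\hausdorff)$ and to $\gamma_2(t)=\restrict{\mu}{\cball{t}{o}}$ in the space of finite measures on $X$ under $\prokhorov$ (these are legitimate metric spaces by the discussion in Section~\ref{sec:HausdorffProkhorov}; note $\restrict{\mu}{\cball{t}{o}}$ is a finite measure because $\mu$ is boundedly finite). Fourth, conclude that the discontinuity radii in $(0,T]$ form a subset of the union of two countable sets, and let $T\to\infty$.

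I do not expect any serious obstacle here; the only mild point requiring a sentence of care is that the c\`adl\`ag property of Lemma~\ref{lem:cadlag} is stated for $t\ge 0$ (or $t>0$), so one should note that the two curves indeed take values in genuine (complete separable) metric spaces so that the abstract countable-discontinuities lemma applies verbatim, and that boundedly-finiteness of $\mu$ ensures $\restrict{\mu}{\cball{t}{o}}$ is finite for every $t$. A slightly more hands-on alternative, avoiding even the abstract lemma, is to observe directly that the real-valued increasing function $t\mapsto\mu(\cball{t}{o})$ has countably many jumps, that the real-valued increasing function recording "the ball grows" can be handled similarly, and that $r$ fails to be a continuity radius only if $\cball{r}{o}\ne\overline{\oball{r}{o}}$ or $\mu(\cball{r}{o}\setminus\oball{r}{o})>0$; the latter forces a jump of $t\mapsto\mu(\cball{t}{o})$ at $r$, and the former forces, via compactness, a jump of $t\mapsto\cball{t}{o}$ under $\hausdorff$ — but routing through Lemma~\ref{lem:cadlag} plus the standard c\`adl\`ag fact is cleaner and is the approach I would write up.
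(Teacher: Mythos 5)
Your proposal is correct and matches the paper's own proof, which likewise deduces the claim from Lemma~\ref{lem:cadlag} together with the standard fact that a c\`adl\`ag function into a metric space has at most countably many discontinuity points. The extra details you supply (the finiteness of $\eta$-sized jumps and the boundedly-finite remark) are fine but not needed beyond what the paper states.
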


\begin{proof}
	Every c\`adl\`ag function in a metric space has at most countably many discontinuity points. So the claim is implied by Lemma~\ref{lem:cadlag}.
	\iffalse
	\mar{Note: This lemma is implied by the fact that any cadlag function has countably many discontinuity points. I suggest to say this, change the lem to cor and delete the proof.}
	By approximating any \pmm{} space with a large ball, one can see that it is enough to prove the claim in the compact case. So assume $(X,o,\mu)$ is compact. 
	Let $A$ be the set of $r>0$ such that $\mu(\cball{r}{o}\setminus\oball{r}{o})>0$. Finiteness of the measure $\mu$ implies that $A$ is at most countable.
	%
	Let $\epsilon>0$ and $A_{\epsilon}$ be the set of $r>0$ such that there exists $x\in\cball{r}{o}$ such that $d(x,\oball{r}{o})>\epsilon$. It is enough to prove that $A_{\epsilon}$ is a finite set. If this is not the case, consider a sequence of distinct numbers $r_1,r_2,\ldots$ in $A_{\epsilon}$. Let $x_n\in \cball{r_n}{o}$ be such that $d(x_n,\oball{r_n}{o})>\epsilon$. It follows that $d(o,x_n)=r_n$ which implies that $x_1,x_2,\ldots$ are distinct. Since $X$ is compact, we may assume that $(x_n)_n$ is convergent without loss of generality. So, one can find $i\neq j$ such that $d(x_i,x_j)<\epsilon$. If $r_i<r_j$, this contradicts $d(x_j,\oball{r_j}{o})>\epsilon$. The case $r_i>r_j$ is similar and the claim is proved.
	\fi
\end{proof}

\subsection{The Metric in the Compact Case}
\label{subsec:GHPcompact}

In this subsection, the compact case of the Gromov-Hausdorff-Prokhorov metric is recalled from~\cite{AbDeHo13}. A Strassen-type result is also presented for the Gromov-Hausdorff-Prokhorov metric (Theorem~\ref{thm:GHPcompact}). In addition, the notion of \textit{\pmm{}-subspace} (Definition~\ref{def:measuredSubspace}) is introduced  and its properties are studied. The latter will be used in the next subsection.

Recall that  $\mmcstar$ is the set of (equivalence classes of) compact \pmm{} spaces.
For compact \pmm{} spaces $\mathcal X=(X,o_X,\mu_X)$ and $\mathcal Y=(Y,o_Y,\mu_Y)$, define the \defstyle{(compact) Gromov-Hausdorff-Prokhorov distance} of $\mathcal X$ and $\mathcal Y$, abbreviated here by the \defstyle{cGHP distance}, by
\begin{equation}
\label{eq:GHPcompact}
\cghp(\mathcal X,\mathcal Y):=\inf\{d(f(o_X),g(o_Y)) \vee \hausdorff(f(X), g(X)) \vee \prokhorov(f_*\mu_X, g_*\mu_Y ) \},
\end{equation}
where the infimum is over all metric spaces $Z$ and all isometric embeddings $f:X\rightarrow Z$ and $g:Y\rightarrow Z$.

The Gromov-Hausdorff-Prokhorov distance is define in~\cite{bookVi10} and~\cite{Mi09} for non-pointed metric spaces and in the case where $\mu_X$ and $\mu_Y$ are probability measures. The general case of the metric is defined in~\cite{AbDeHo13} by a similar formula in which $+$ is used instead of $\vee$, but is equivalent to~\eqref{eq:GHPcompact} up to a factor of 3. It is proved in~\cite{AbDeHo13} that $\cghp$ is a metric on $\mmcstar$ and makes it a complete separable metric space. The same proofs work by considering the slight modification mentioned above\del{(these results are also implied by those of Subsection~\ref{subsec:category-compact} below)}.
The reason to consider $\vee$ instead of $+$ is 
to ensure a Strassen-type result (Theorem~\ref{thm:GHPcompact} below)
that provides a useful formulation of the cGHP metric in terms of  approximate couplings and \textit{correspondences}.

\begin{remark}[Non-Pointed Spaces]
	\label{rem:nonpointed}
	In the compact case, a similar metric is defined between non-pointed spaces. It is obtained by removing the term $d(f(o_X),g(o_Y))$ from~\eqref{eq:GHPcompact}. Equivalently, by letting the distance of $(X,\mu_X)$ and $(Y,\mu_Y)$ be
	\[
	\min\left\{\cghp\Big((X,x,\mu_X), (Y,y,\mu_Y) \Big): x\in X, y\in Y \right\}.
	\]
	The results of this subsection have analogues for non-pointed spaces as well. However, considering pointed spaces is essential in the non-compact case discussed in the next subsection.
\end{remark}

\del{\begin{remark}[Gromov-Hausdorff Metric]
	\label{rem:GHspecialcase2}
	\mar{Duplicate with introduction}
	The \defstyle{Gromov-Hausdorff metric} $\cgh$ is defined on the space $\mcstar$ of compact pointed metric spaces similarly to~\eqref{eq:GHPcompact} by removing the term $\prokhorov(f_*\mu_X, g_*\mu_Y )$, or equivalently, by letting $\mu_X$ and $\mu_Y$ be the zero measures on $X$ and $Y$ respectively. The Gromov-Hausdorff metric is also defined on the space $\mcstarzero$ of non-pointed compact metric spaces similarly. More discussion is provided in Subsection~\ref{subsec:GH}.
\end{remark}}

A \defstyle{correspondence} $R$ (see e.g.,~\cite{bookBBI}) between $X$ and $Y$ is a relation between points of $X$ and $Y$ such that it is a Borel subset of $X\times Y$ and every point in $X$ corresponds to at least one point in $Y$ and vice versa. The \defstyle{distortion} of $R$ is
\[
\distortion(R):= \sup\{\norm{d(x,x')-d(y,y')}: (x,y)\in R, (x',y')\in R \}.
\]
The following is the main result of this subsection. It is a Strassen-type result for the metric $\cghp$ and is based on Theorem~\ref{thm:strassen}. %
\begin{theorem}
	\label{thm:GHPcompact}
	Let $\mathcal X=(X,o_X,\mu_X)$ and $\mathcal Y=(Y,o_Y,\mu_Y)$ be compact \pmm{} spaces and $\epsilon\geq 0$.
	\begin{enumerate}[(i)]
		\item \label{thm:GHPcompact:1} $\cghp(\mathcal X,\mathcal Y)\leq \epsilon$ if and only if there exists a correspondence $R$ between $X$ and $Y$ and a Borel measure $\alpha$ on $X\times Y$ such that $(o_X, o_Y)\in R$, $\distortion(R)\leq 2\epsilon$ and $D(\alpha;\mu_X,\mu_Y)+ \alpha(R^c) \leq \epsilon$.\\
		
		\item \label{thm:GHPcompact:2} In other words,
		\begin{equation}
		\label{eq:thm:GHPcompact}
		\cghp(\mathcal X,\mathcal Y)=\inf_{R,\alpha} \left\{\frac 1 2 \distortion(R) \vee \big(D(\alpha;\mu_X,\mu_Y)+ \alpha(R^c) \big)\right\}
		\end{equation}
		and the infimum is attained.
		
		\item \label{thm:GHPcompact:3} In addition, if $||\mu_X||\leq ||\mu_Y||$, then the infimum is attained for some $R$ and $\alpha$ such that $\pi_{1*}\alpha=\mu$ and $\pi_{2*}\alpha\leq \nu$.
	\end{enumerate}
\end{theorem}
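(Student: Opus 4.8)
The plan is to prove all three parts at once: first establish the equality \eqref{eq:thm:GHPcompact} (write $\Phi(\mathcal X,\mathcal Y)$ for its right-hand infimum, over correspondences $R$ with $(o_X,o_Y)\in R$ and Borel measures $\alpha$ on $X\times Y$), then promote the infimum to a minimum by a compactness argument, and read off (i) and (iii). Since $\cghp$, $\distortion$, the discrepancy $D$ and the event $R^c$ are all symmetric in $\mathcal X,\mathcal Y$, I may and will assume throughout that $||\mu_X||\le||\mu_Y||$; the remaining case of (i) and (ii) is recovered by swapping the two spaces.

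\emph{Step 1: $\cghp\le\Phi$.} Given an admissible pair $(R,\alpha)$ of value $v:=\tfrac12\distortion(R)\vee(D(\alpha;\mu_X,\mu_Y)+\alpha(R^c))$, glue $X$ to $Y$ along $R$ in the standard way: on the disjoint union $X\sqcup Y$ set $\hat d(x,y):=\inf_{(x',y')\in R}(d(x,x')+\tfrac12\distortion(R)+d(y',y))$ for $x\in X$, $y\in Y$, keeping the original metrics on each side. This is a pseudometric; its metric quotient $Z$ is a compact (hence complete separable) metric space carrying isometric embeddings $f:X\to Z$, $g:Y\to Z$ with $d_Z(f(o_X),g(o_Y))\le\tfrac12\distortion(R)\le v$, $\hausdorff(f(X),g(Y))\le\tfrac12\distortion(R)\le v$, and $d_Z(f(x),g(y))\le\tfrac12\distortion(R)$ whenever $(x,y)\in R$. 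Push $\alpha$ forward to $\gamma:=(f\times g)_*\alpha$ on $Z\times Z$: it is carried by $f(X)\times g(Y)$, it has $D(\gamma;f_*\mu_X,g_*\mu_Y)=D(\alpha;\mu_X,\mu_Y)$ because the injective embeddings $f,g$ preserve total variation, and $\gamma(\{d_Z>v\})\le\alpha(R^c)$ because the $R$-pairs are mapped into $\{d_Z\le v\}$. Part~\eqref{thm:Prokhorov:1} of Theorem~\ref{thm:strassen} then yields $\prokhorov(f_*\mu_X,g_*\mu_Y)\le v$, so $\cghp(\mathcal X,\mathcal Y)\le v$; taking the infimum over $(R,\alpha)$ gives $\cghp\le\Phi$.

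\emph{Step 2: $\Phi\le\cghp$, with normalized witnesses.} Fix $\epsilon'>\cghp(\mathcal X,\mathcal Y)$ and choose a metric space $Z$ with isometric embeddings $f,g$ making all three terms of \eqref{eq:GHPcompact} smaller than $\epsilon'$. Put $R:=\{(x,y):d_Z(f(x),g(y))\le\epsilon'\}$: this is a closed (hence Borel) correspondence containing $(o_X,o_Y)$, with $\distortion(R)\le 2\epsilon'$ by the triangle inequality. Since $f_*\mu_X$ is carried by the closed set $f(X)$ and $||f_*\mu_X||=||\mu_X||\le||\mu_Y||=||g_*\mu_Y||$, part~\eqref{thm:Prokhorov:2} of Theorem~\ref{thm:strassen} provides a measure $\beta$ on $Z\times Z$, carried by $f(X)\times g(Y)$, with $\pi_{1*}\beta=f_*\mu_X$, $\pi_{2*}\beta\le g_*\mu_Y$ and $D(\beta;f_*\mu_X,g_*\mu_Y)+\beta(\{d_Z>\epsilon'\})\le\epsilon'$. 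Pulling $\beta$ back along the homeomorphism $f^{-1}\times g^{-1}:f(X)\times g(Y)\to X\times Y$ gives a measure $\alpha$ on $X\times Y$ with $\pi_{1*}\alpha=\mu_X$, $\pi_{2*}\alpha\le\mu_Y$ and $D(\alpha;\mu_X,\mu_Y)+\alpha(R^c)\le\epsilon'$, so $(R,\alpha)$ witnesses $\Phi\le\epsilon'$. Letting $\epsilon'\downarrow\cghp$ gives $\Phi=\cghp$, and, importantly, every $\epsilon'>\cghp$ admits a witness with $\pi_{1*}\alpha=\mu_X$ and $\pi_{2*}\alpha\le\mu_Y$.

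\emph{Step 3: attainment, and the main obstacle.} Take normalized witnesses $(R_n,\alpha_n)$ of values $v_n\downarrow\cghp$. The masses $||\alpha_n||$ lie within $v_n$ of $||\mu_X||$, so on the compact space $X\times Y$ the measures $\alpha_n$ are tight, while the closed sets $R_n$ lie in the compact space $(\mathcal F(X\times Y),\hausdorff)$; pass to a subsequence along which $\alpha_n\to\alpha$ weakly and $R_n\to R$ in the Hausdorff metric. Then $R$ is again a correspondence containing $(o_X,o_Y)$ with $\distortion(R)\le 2\cghp$ (distortion is lower semicontinuous under Hausdorff limits, and limits of points of the $R_n$ remain in $R$); the normalization passes to the limit ($\pi_{1*}$ is weakly continuous and $\pi_{2*}\alpha_n\le\mu_Y$ survives via the portmanteau theorem on open sets), giving $\pi_{1*}\alpha=\mu_X$, $\pi_{2*}\alpha\le\mu_Y$; hence $D(\alpha_n;\mu_X,\mu_Y)=||\mu_Y||-||\alpha_n||\to||\mu_Y||-||\alpha||=D(\alpha;\mu_X,\mu_Y)$, using continuity of total mass under weak convergence on a compact space. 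Exhausting $R^c$ from inside by the open sets $\{(x,y):d((x,y),R)>\rho\}$, each contained in $R_n^c$ for $n$ large, portmanteau gives $\alpha(R^c)\le\liminf_n\alpha_n(R_n^c)$; combining, $D(\alpha;\mu_X,\mu_Y)+\alpha(R^c)\le\liminf_n(D(\alpha_n;\mu_X,\mu_Y)+\alpha_n(R_n^c))\le\cghp$, so the infimum in \eqref{eq:thm:GHPcompact} is attained at $(R,\alpha)$. This yields (ii) and (iii); and for (i), the ``if'' direction is Step 1 applied to the given pair, while ``only if'' ($\cghp\le\epsilon$) uses the attained minimizer, whose value is $\cghp\le\epsilon$. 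The crux of the whole proof is Step 3: $D(\alpha;\mu_X,\mu_Y)$ is built from a total-variation distance, which is \emph{not} lower semicontinuous under weak convergence, so a raw minimizing sequence need not converge to a minimizer; the resolution is to carry the normalization $\pi_{1*}\alpha_n=\mu_X$, $\pi_{2*}\alpha_n\le\mu_Y$ (available from part~\eqref{thm:Prokhorov:2} of Theorem~\ref{thm:strassen} once $||\mu_X||\le||\mu_Y||$) through the whole argument, which turns $D(\alpha_n;\mu_X,\mu_Y)$ into $||\mu_Y||-||\alpha_n||$, a quantity that does converge, and to treat $\alpha(R^c)$ carefully as a simultaneous limit in $\alpha$ and $R$.
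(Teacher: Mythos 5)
Your proposal is correct and follows essentially the same route as the paper: both directions reduce to the generalized Strassen theorem (using a common embedding built from the correspondence for one direction, and the correspondence $R_{\epsilon'}=\{d_Z(f(x),g(y))\le\epsilon'\}$ with a normalized witness $\pi_{1*}\alpha=\mu_X$, $\pi_{2*}\alpha\le\mu_Y$ for the other), and attainment is obtained exactly as in the paper by Blaschke plus Prokhorov compactness, with the normalization turning the discrepancy into $\|\mu_Y\|-\|\alpha_n\|$ so it passes to the limit. The only cosmetic differences are that you reprove the gluing step (Theorem~7.3.25 of~\cite{bookBBI}) explicitly and state the $\|\mu_X\|\le\|\mu_Y\|$ reduction as an explicit WLOG, which the paper leaves implicit.
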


\begin{remark}
	The formula~\eqref{eq:thm:GHPcompact} resembles the definition of a metric in~\cite{AdBrGoMi17} which uses $\vee$ instead of $+$. The definition in~\cite{AdBrGoMi17}, although is not equal to the classical Gromov-Hausdorff-Prokhorov metric, but is equivalent to it.
\end{remark}

\begin{remark}
	Theorem~\ref{thm:GHPcompact} generalizes Theorem~7.3.25 of~\cite{bookBBI} and Proposition~6 of~\cite{Mi09}. The former is a result for the Gromov-Hausdorff distance; i.e., the case where  $\mu_X$ and $\mu_Y$ are the zero measures. The latter is the case where $\mu_X$ and $\mu_Y$ are probability measures, where $\alpha$ can be chosen to be a coupling of $\mu_X$ and $\mu_Y$ and the term $D(\alpha;\mu_X,\mu_Y)$ disappears.
\end{remark}

\begin{proof}[Proof of Theorem~\ref{thm:GHPcompact}]
	Assume $R$ is a correspondence such that $(o_X,o_Y)\in R$ and $\distortion(R)\leq 2\epsilon$.  %
	By Theorem~7.3.25 in~\cite{bookBBI}, without loss of generality, one can assume $X,Y\subseteq Z$, $\hausdorff(X,Y)\leq \epsilon$ and if $(x,y)\in R$, then $d(x,y)\leq \epsilon$. Assume $\alpha$ is a measure such that $D(\alpha;\mu_X,\mu_Y)+ \alpha(R^c)\leq \epsilon$. One has $\alpha(\{(x,y):d(x,y)>\epsilon\}) \leq \alpha(R^c)$. So, Theorem~\ref{thm:strassen} implies that $\prokhorov(\mu_X,\mu_Y)\leq \epsilon$. This implies that $\cghp(\mathcal X,\mathcal Y)\leq \epsilon$.

	Conversely, assume $\cghp(\mathcal X,\mathcal Y)\leq \epsilon$. Let $\delta>\epsilon$. By~\eqref{eq:GHPcompact}, one can find two isometric embeddings $f:X\rightarrow Z$ and $g:Y\rightarrow Z$ for some $Z$ such that %
	\begin{equation}
	\label{eq:prop:GHPcompact:1}
	\left\{
	\begin{array}{lll}
	d(f(o_X),g(o_Y))&\leq&\delta,\\
	\hausdorff(f(X),g(Y))&\leq&\delta, \\
	\prokhorov(f_*\mu_X,g_*\mu_Y)&\leq& \delta,
	\end{array}
	\right.
	\end{equation}
	where $d_H$ and $\prokhorov$ are defined using this metric on $Z$.
	Let $R_{\delta}:=\{(x,y)\in X\times Y: d(f(x),g(y))\leq \delta \}$. The first condition in~\eqref{eq:prop:GHPcompact:1} implies that $(o_X,o_Y)\in R_{\delta}$. The second condition in~\eqref{eq:prop:GHPcompact:1} implies that $R_{\delta}$ is a correspondence. One also has $\distortion(R_{\delta})\leq 2\delta$. The third condition in~\eqref{eq:prop:GHPcompact:1} and Theorem~\ref{thm:strassen} imply that there exists a measure $\beta$ on $Z\times Z$ such that $D(\beta;f_*\mu_X,g_*\mu_Y)+ \beta(\{(x,y)\in Z\times Z:d(x,y)>\delta\})\leq \delta$. The third part of Theorem~\ref{thm:strassen} shows that $\beta$ can be chosen to be supported on $f(X)\times g(Y)$. Therefore, $\beta$ induces a measure $\alpha_{\delta}$ on $X\times Y$ by the inverses of the isometries $f$ and $g$. Thus, 
	\begin{equation}
	\label{eq:prop:GHPcompact:2}
	D(\alpha_{\delta};\mu_X,\mu_Y)+ \alpha_{\delta}(R_{\delta}^c)\leq \delta.
	\end{equation}
	
	Now, we will consider the limits of $R_{\delta}$ and $\alpha_{\delta}$ as $\delta\downarrow\epsilon$. Since $X\times Y$ is compact, Blaschke's theorem (see e.g., Theorem~7.3.8 in~\cite{bookBBI}) implies that 
	there exists a subsequence of the sets $R_{\delta}$ that is convergent in the Hausdorff metric to some closed subset of $X\times Y$. Let $R\subseteq X\times Y$ be the limit of this sequence.
	Since each $R_{\delta}$ is a correspondence, it can be seen that $R$ is also a correspondence and $(o_X,o_Y)\in R$. Also, it can be seen that the fact $\distortion(R_{\delta})\leq 2\delta$ implies that $\distortion(R)\leq 2\epsilon$. Prokhorov's theorem on tightness~\cite{Pr56} (see also~\cite{bookBi99} or~\cite{bookKa17randommeasures}) implies that there is a further subsequence such that the measures $\alpha_{\delta}$ converge weakly. So assume $\alpha_{\delta}\to \alpha$ along this subsequence. From now on, we assume $\delta$ is always in the subsequence without mentioning it explicitly. %
	
	Let $h$ be any continuous function on $X\times Y$ whose support is disjoint from $R$ and $h\leq 1$. This implies that $\mathrm{supp}(h)\cap R_{\delta}=\emptyset$ for sufficiently small $\delta$. Therefore, $\int hd\alpha_{\delta} \leq \alpha_{\delta}(R_{\delta}^c)$. The weak convergence $\alpha_{\delta}\to\alpha$ gives $\int hd\alpha\leq \liminf \alpha_{\delta}(R_{\delta}^c)$. %
	By considering this for all $h$, one gets 
	\begin{equation}
	\label{eq:prop:GHPcompact:3}
	\alpha(R^c)\leq \liminf \alpha_{\delta}(R_{\delta}^c).
	\end{equation}
	
	For considering the discrepancy $D(\alpha;\mu_X,\mu_Y)$ of $\alpha$, assume $\beta$ is chosen in the above argument such that the condition in part~\eqref{thm:Prokhorov:2} of Theorem~\ref{thm:strassen} is satisfied, hence $\pi_{1*}\alpha_{\delta}= \mu_X$ and $\pi_{2*}\alpha_{\delta}\leq \mu_Y$. One can easily obtain  $\pi_{1*}\alpha= \mu_X$ and $\pi_{2*}\alpha\leq \mu_Y$. Therefore, one gets
	\begin{eqnarray*}
		D(\alpha_{\delta}; \mu_X,\mu_Y) &=& \mu_Y(Y)-\alpha_{\delta}(X\times Y),\\ %
		D(\alpha; \mu_X,\mu_Y) &=& \mu_Y(Y)-\alpha(X\times Y).%
	\end{eqnarray*}
	These equations enable us to obtain that $D(\alpha; \mu_X,\mu_Y) = \lim 	D(\alpha_{\delta}; \mu_X,\mu_Y)$. Finally,  \eqref{eq:prop:GHPcompact:2} and~\eqref{eq:prop:GHPcompact:3} imply that $D(\alpha;\mu_X,\mu_Y) + \alpha(R^c)\leq \epsilon$. Therefore, $R$ and $\alpha$ satisfy the claim. This proves parts~\eqref{thm:GHPcompact:1} and~\eqref{thm:GHPcompact:2} of the theorem.
	
	As mentioned above, if $\beta$ is chosen such that $\pi_{1*}\beta=f_*\mu_X$ and $\pi_{2*}\beta\leq g_*\mu_Y$, then the claim of part~\eqref{thm:GHPcompact:3} is obtained. So the proof is completed.
\end{proof}

Theorem~\ref{thm:GHPcompact} readily implies the following.

\begin{corollary}
	\label{cor:cghp-inf}
	The infimum in the definition of the cGHP metric~\eqref{eq:GHPcompact} is attained.
\end{corollary}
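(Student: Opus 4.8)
The plan is to derive the statement from Theorem~\ref{thm:GHPcompact}, specifically from the fact that the infimum in~\eqref{eq:thm:GHPcompact} is attained. First I would set $\epsilon := \cghp(\mathcal X, \mathcal Y)$ and invoke part~\eqref{thm:GHPcompact:1} (or equivalently part~\eqref{thm:GHPcompact:2}) of Theorem~\ref{thm:GHPcompact} to obtain a correspondence $R$ between $X$ and $Y$ and a Borel measure $\alpha$ on $X\times Y$ with $(o_X,o_Y)\in R$, $\distortion(R)\le 2\epsilon$, and $D(\alpha;\mu_X,\mu_Y)+\alpha(R^c)\le\epsilon$. The task is then to convert this abstract ``optimal'' pair $(R,\alpha)$ into an actual admissible triple $(Z,f,g)$ realizing the infimum in~\eqref{eq:GHPcompact}; that is, a metric space $Z$ and isometric embeddings $f:X\to Z$, $g:Y\to Z$ with $d(f(o_X),g(o_Y))\vee\hausdorff(f(X),g(Y))\vee\prokhorov(f_*\mu_X,g_*\mu_Y)\le\epsilon$.

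The construction of $Z$ from a correspondence of small distortion is classical: I would follow the proof of Theorem~7.3.25 in~\cite{bookBBI}, which is already used inside the proof of Theorem~\ref{thm:GHPcompact}. Concretely, take $Z := X\sqcup Y$ as a set and define a metric on it extending the metrics of $X$ and $Y$ by $d(x,y):=\inf\{d(x,x')+\tfrac12\distortion(R)+d(y',y): (x',y')\in R\}$ for $x\in X$, $y\in Y$; one checks this is a genuine metric (the triangle inequality uses $\distortion(R)\le 2\epsilon$), that the inclusions $f:X\hookrightarrow Z$ and $g:Y\hookrightarrow Z$ are isometric embeddings, that $d(f(x),g(y))\le\epsilon$ whenever $(x,y)\in R$ — in particular $d(f(o_X),g(o_Y))\le\epsilon$ — and that $\hausdorff(f(X),g(Y))\le\epsilon$ since $R$ is a correspondence with every pair at distance $\le\epsilon$. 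It remains to bound $\prokhorov(f_*\mu_X,g_*\mu_Y)$: the measure $\alpha$, transported to $Z\times Z$ via $f$ and $g$, satisfies $\alpha(\{(u,v):d(u,v)>\epsilon\})\le\alpha(R^c)$ because $(x,y)\in R$ forces $d(f(x),g(y))\le\epsilon$, and also $D(\alpha;f_*\mu_X,g_*\mu_Y)=D(\alpha;\mu_X,\mu_Y)$; hence $D+\alpha(\{d>\epsilon\})\le\epsilon$, and Theorem~\ref{thm:strassen}\eqref{thm:Prokhorov:1} gives $\prokhorov(f_*\mu_X,g_*\mu_Y)\le\epsilon$. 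Thus $(Z,f,g)$ is admissible and achieves a value $\le\epsilon=\cghp(\mathcal X,\mathcal Y)$, so the infimum in~\eqref{eq:GHPcompact} is attained.

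The one technical point that needs a little care — and the step I expect to be the main (minor) obstacle — is the closedness/Borel-measurability bookkeeping: $R$ produced by Theorem~\ref{thm:GHPcompact} is a correspondence (Borel, and one may take it closed by the Blaschke-limit construction in that proof), so the set $\{(x,y):d(x,y)>\epsilon\}$ in $Z\times Z$ is open and the inequality $\alpha(\{d>\epsilon\})\le\alpha(R^c)$ is legitimate; and the neighborhoods appearing when applying Theorem~\ref{thm:strassen} are taken with respect to the metric of $Z$ just constructed, which is complete and separable since $X$ and $Y$ are compact. Everything else is routine verification, so I would keep the write-up short, citing Theorem~7.3.25 of~\cite{bookBBI} for the metric-on-$X\sqcup Y$ construction rather than reproducing it.
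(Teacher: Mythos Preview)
Your proof is correct and follows exactly the route the paper intends: the paper records this corollary as an immediate consequence of Theorem~\ref{thm:GHPcompact}, and your write-up simply unpacks that one-line deduction by taking the optimal $(R,\alpha)$ from~\eqref{eq:thm:GHPcompact}, building $Z=X\sqcup Y$ via the Burago--Burago--Ivanov construction (as in the first paragraph of the proof of Theorem~\ref{thm:GHPcompact}), and then invoking Theorem~\ref{thm:strassen} for the Prokhorov bound. The only cosmetic point is that using $\epsilon$ rather than $\tfrac12\distortion(R)$ as the cross-distance constant avoids the degenerate pseudometric case when $\distortion(R)=0$; either way the argument goes through.
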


The following are further properties of $\cghp$ which are needed later.

\begin{lemma}
	\label{lem:radius}
	For compact \pmm{} spaces $\mathcal X=(X,o_X,\mu_X)$ and $\mathcal Y=(Y,o_Y,\mu_Y)$, 
	\[
	\max\{d(o_Y,y):y\in Y \}\leq \max\{d(o_X,x):x\in X\} + 2\cghp(\mathcal X, \mathcal Y).
	\]
\end{lemma}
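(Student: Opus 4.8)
The plan is to reduce the statement to the Strassen-type characterization of the cGHP metric (Theorem~\ref{thm:GHPcompact}), which replaces the infimum over embeddings by a statement about a single correspondence and lets one extract a point-to-point comparison directly. Set $\epsilon:=\cghp(\mathcal X,\mathcal Y)$. By part~\ref{thm:GHPcompact:2} of that theorem (or equivalently Corollary~\ref{cor:cghp-inf}) the infimum is attained, so there is a correspondence $R\subseteq X\times Y$ with $(o_X,o_Y)\in R$ and $\distortion(R)\le 2\epsilon$; the measure $\alpha$ plays no role here and can be ignored.

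Next, fix an arbitrary $y\in Y$. Since $R$ is a correspondence, there is some $x\in X$ with $(x,y)\in R$. Applying the definition of distortion to the two pairs $(o_X,o_Y)\in R$ and $(x,y)\in R$ gives
\[
\bigl|d(o_X,x)-d(o_Y,y)\bigr|\le \distortion(R)\le 2\epsilon,
\]
hence $d(o_Y,y)\le d(o_X,x)+2\epsilon\le \max\{d(o_X,x'):x'\in X\}+2\epsilon$. Taking the maximum over $y\in Y$ (which is attained since $Y$ is compact and $y\mapsto d(o_Y,y)$ is continuous) yields the claimed inequality.

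There is essentially no obstacle; the only point requiring care is having the infimum attained, which is precisely what Theorem~\ref{thm:GHPcompact} supplies. One could also bypass that theorem entirely: for each $\delta>\epsilon$ pick isometric embeddings $f:X\to Z$, $g:Y\to Z$ realizing the three bounds in~\eqref{eq:GHPcompact} up to $\delta$; for $y\in Y$ choose $x\in X$ with $d(f(x),g(y))\le\delta$ (possible because $\hausdorff(f(X),g(Y))\le\delta$), and estimate $d(o_Y,y)=d(g(o_Y),g(y))\le d(g(o_Y),f(o_X))+d(f(o_X),f(x))+d(f(x),g(y))\le d(o_X,x)+2\delta$, then let $\delta\downarrow\epsilon$ and take the maximum over $y$. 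Both routes are equally short; I would present the correspondence version since Theorem~\ref{thm:GHPcompact} is already in hand.
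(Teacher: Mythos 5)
Your proof is correct and follows essentially the same route as the paper: apply Theorem~\ref{thm:GHPcompact} to get a correspondence $R$ containing $(o_X,o_Y)$ with $\distortion(R)\leq 2\cghp(\mathcal X,\mathcal Y)$, then compare an arbitrary $y\in Y$ with a point of $X$ that $R$-corresponds to it. The extra embedding-based variant is fine but unnecessary.
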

\begin{proof}
	Let $\epsilon:=\cghp(\mathcal X, \mathcal Y)$. By Theorem~\ref{thm:GHPcompact}, there is a correspondence $R$ between $X$ and $Y$ such that $(o_X,o_Y)\in R$ and $\distortion(R)\leq 2\epsilon$. Let $y\in Y$ be arbitrary. There exists $x\in X$ that $R$-corresponds to $y$. Since $\distortion(R)\leq 2\epsilon$, one gets $d(o_Y,y)\leq d(o_X,x)+2\epsilon$. This implies the claim.
\end{proof}

The following definition and results are needed for the next subsection.

\begin{definition}
	\label{def:measuredSubspace}
	Let $\mathcal X=(X,o,\mu)$ and $\mathcal X'=(X',o',\mu')$ be \pmm{} spaces. $\mathcal X'$ is called a \defstyle{\pmm-subspace} of $\mathcal X$ if $X'\subseteq X$, $o'=o$ and $\mu'\leq \mu$.
	The following symbol is used to express that $\mathcal X'$ is a \pmm-subspace of $\mathcal X$:
	\[
	\mathcal X'\preceq \mathcal X.
	\]
	For two \pmm{}-subspaces $\mathcal X_i=(X_i,o,\mu_i)$ of $\mathcal X$ ($i=1,2$), their \defstyle{Hausdorff-Prokhorov distance} is defined by
	\begin{equation}
	\label{eq:HP}
	\hp(\mathcal X_1,\mathcal X_2):=d_H(X_1,X_2)\vee \prokhorov(\mu_1,\mu_2).
	\end{equation}
	This equation immediately gives
	\begin{equation}
	\label{eq:hp-ineq}
	\cghp(\mathcal X_1,\mathcal X_2)\leq \hp(\mathcal X_1,\mathcal X_2).
	\end{equation}
\end{definition}

\begin{lemma}
	\label{lem:GHP-monotone}
	Let $\mathcal X$ and $\mathcal Y$ be compact \pmm{} spaces.
	\begin{enumerate}[(i)]
		\item \label{part:lem:GHP-monotone:1} If $\mathcal X'$ is a compact \pmm-subspace of $\mathcal X$, then there exists a compact \pmm-subspace $\mathcal Y'$ of $\mathcal Y$ such that 
		\[
		\cghp(\mathcal X', \mathcal Y')\leq \cghp(\mathcal X, \mathcal Y). %
		\]
		\item \label{part:lem:GHP-monotone:2} 
		Let $\epsilon:= \cghp(\mathcal X,\mathcal Y)$ and $r\geq 2\epsilon$ be arbitrary. 
		If in addition to~\eqref{part:lem:GHP-monotone:1}, one has $\pcball{\mathcal X}{r}\preceq \mathcal X'\preceq \mathcal X$, then $\mathcal Y'$ can be chosen such that $\pcball{\mathcal Y}{r-2\epsilon}\preceq \mathcal Y'\preceq\mathcal Y$.
		%
		%
		\iffalse
		\item \label{part:lem:GHP-monotone:3}
		By replacing closed balls with open balls in part~\eqref{part:lem:GHP-monotone:2}, the claim still holds.
		\fi
	\end{enumerate}

\end{lemma}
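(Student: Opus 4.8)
The plan is to derive everything from the Strassen-type characterization in Theorem~\ref{thm:GHPcompact}. For part~\eqref{part:lem:GHP-monotone:1}, let $\epsilon := \cghp(\mathcal X,\mathcal Y)$ and, using Theorem~\ref{thm:GHPcompact}\eqref{thm:GHPcompact:1}, fix a correspondence $R$ between $X$ and $Y$ with $(o_X,o_Y)\in R$, $\distortion(R)\le 2\epsilon$, and a Borel measure $\alpha$ on $X\times Y$ with $D(\alpha;\mu_X,\mu_Y)+\alpha(R^c)\le\epsilon$. Write $\mathcal X'=(X',o,\mu')$ with $X'\subseteq X$ and $\mu'\le\mu_X$. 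Define $Y' := R(X') = \{y\in Y: \exists x\in X', (x,y)\in R\}$, which is a closed (hence compact) subset of $Y$ containing $o_Y$. To build the measure on $Y'$, restrict $R$ to $R' := R\cap(X'\times Y')$, set $\alpha' := \restrict{\alpha}{X'\times Y}$, and take $\mu'_Y := \pi_{2*}\alpha'$ —- but this need not be $\le\mu_Y$ nor does the discrepancy obviously behave; the cleaner route is to keep $\mu'_Y$ small. Concretely, I would check that the pair $(R',\alpha')$ with $R'$ as above and $\alpha'$ chosen as $\restrict{\alpha}{X'\times Y'}$ witnesses $\cghp(\mathcal X',\mathcal Y')\le\epsilon$ once we define $\mu'_Y$ appropriately: since $\mu'\le\mu_X$ and $\distortion(R)\le2\epsilon$, one should be able to choose $\mu'_Y\le\mu_Y$ with $D(\alpha';\mu',\mu'_Y)+\alpha'(R'^c)\le\epsilon$. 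The monotonicity $\pi_{1*}\alpha\le\mu_X$ that part~\eqref{thm:GHPcompact:3} guarantees (after possibly swapping the roles of $\mathcal X,\mathcal Y$) is what makes this bookkeeping go through.

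For part~\eqref{part:lem:GHP-monotone:2}, the extra hypothesis is $\pcball{\mathcal X}{r}\preceq\mathcal X'$, i.e.\ $\cball{r}{o_X}\subseteq X'$ and $\restrict{\mu_X}{\cball{r}{o_X}}\le\mu'$. I want to show $\cball{r-2\epsilon}{o_Y}\subseteq Y'=R(X')$ and $\restrict{\mu_Y}{\cball{r-2\epsilon}{o_Y}}\le\mu'_Y$. For the set inclusion: given $y\in Y$ with $d(o_Y,y)\le r-2\epsilon$, pick $x\in X$ with $(x,y)\in R$; since $(o_X,o_Y)\in R$ and $\distortion(R)\le2\epsilon$, we get $d(o_X,x)\le d(o_Y,y)+2\epsilon\le r$, so $x\in\cball{r}{o_X}\subseteq X'$, hence $y\in R(X')=Y'$ —- this is exactly the argument used in Lemma~\ref{lem:radius}. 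For the measure inequality one needs to track how the construction of $\mu'_Y$ treats the ball; the natural claim is that the portion of $\alpha$ sitting over $\cball{r}{o_X}\times Y$ already pushes forward (on the second coordinate) to something dominating $\restrict{\mu_Y}{\cball{r-2\epsilon}{o_Y}}$, again because points $R$-corresponding to the small ball in $Y$ come from the large ball in $X$, combined with $\pi_{1*}\alpha=\mu_X$ (from part~\eqref{thm:GHPcompact:3}) so that no mass over $\cball{r}{o_X}$ is lost.

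The main obstacle I anticipate is the measure-side bookkeeping in defining $\mu'_Y$: unlike the set-theoretic part, which is a clean distortion estimate, one must simultaneously (a) keep $\mu'_Y\le\mu_Y$, (b) control the discrepancy $D(\alpha';\mu',\mu'_Y)$, and (c) ensure $\mu'_Y$ dominates $\restrict{\mu_Y}{\cball{r-2\epsilon}{o_Y}}$ in part~\eqref{part:lem:GHP-monotone:2}. The way to handle this is to not take $\mu'_Y=\pi_{2*}\alpha'$ naively but rather to set $\mu'_Y := \pi_{2*}\alpha' + \big(\mu_Y - \pi_{2*}\alpha\big)^+\!\restriction_{Y'}$ or some similar correction, using $\pi_{2*}\alpha\le\mu_Y$, so that on $Y'$ the measure $\mu'_Y$ recaptures exactly the $\mu_Y$-mass that $\alpha$ fails to see, while outside $Y'$ nothing is added; then verify that the discrepancy and the $R'^c$-mass of $\alpha'$ are still bounded by $\epsilon$ and that over the relevant ball $\mu'_Y$ agrees with $\mu_Y$. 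Once the correct definition is pinned down, both parts follow by assembling these estimates and invoking Theorem~\ref{thm:GHPcompact}\eqref{thm:GHPcompact:1} in the reverse direction.
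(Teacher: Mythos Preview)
Your skeleton matches the paper's: invoke Theorem~\ref{thm:GHPcompact} to get $(R,\alpha)$ with both marginals of $\alpha$ dominated by $\mu_X,\mu_Y$, set $Y'=R(X')$, $R'=R\cap(X'\times Y')$, and build $\mathcal Y'$ from a restriction of $\alpha$. The set-side argument and the inclusion $\cball{r-2\epsilon}{o_Y}\subseteq Y'$ in part~\eqref{part:lem:GHP-monotone:2} are exactly right.

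The gap is on the measure side, and it is not where you locate it. You worry about making $\mu'_Y\le\mu_Y$, but since $\pi_{2*}\alpha\le\mu_Y$, any $\mu'_Y\le\pi_{2*}(\restrict{\alpha}{X'\times Y'})$ already satisfies this. The real problem is the \emph{first} marginal: with $\alpha':=\restrict{\alpha}{X'\times Y'}$ you only know $\pi_{1*}\alpha'\le\restrict{\mu_X}{X'}$, and this can vastly exceed $\mu'$. Take $X=X'=\{o_X\}$, $\mu_X=10\,\dirac{o_X}$, $\mu'=\dirac{o_X}$, $Y=\{o_Y\}$, $\mu_Y=10\,\dirac{o_Y}$: then $\epsilon=0$ but $\tv{\pi_{1*}\alpha'}{\mu'}=9$. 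No adjustment of $\mu'_Y$ fixes this; the approximate coupling itself must be shrunk so that its first marginal is dominated by $\mu'$, not merely by $\mu_X$.

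The paper does this via a disintegration-type result (Lemma~1 of~\cite{Th96}): given $\alpha_1:=\restrict{\alpha}{X'\times Y'}$, there exists $\alpha'\le\alpha_1$ with $\pi_{1*}\alpha'=\pi_{1*}\alpha_1\wedge\mu'$. This forces $\pi_{1*}\alpha'\le\mu'$, after which one simply sets $\mu'_Y:=\pi_{2*}\alpha'$ (so the second-marginal discrepancy vanishes) and bounds $\tv{\pi_{1*}\alpha'}{\mu'}+\alpha'((R')^c)$ by splitting $X'\times Y$ into regions $C_1,C_2$ disjoint from $R$. For part~\eqref{part:lem:GHP-monotone:2} the paper then replaces $\mu'_Y$ by $\pi_{2*}\alpha'\vee\restrict{\mu_Y}{\cball{r-2\epsilon}{o_Y}}$---in the spirit of your proposed correction---and introduces two further regions $C_3,C_4$ to close the estimate. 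Your plan becomes complete once you insert this trimming step for $\alpha$.
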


\begin{proof}
	Let $\mathcal X=:(X,o_X,\mu_X)$, $\mathcal Y=:(Y,o_Y,\mu_Y)$ and $\epsilon:=\cghp(\mathcal X, \mathcal Y)$.
	By Theorem~\ref{thm:GHPcompact}, there exists a correspondence $R$ between $X$ and $Y$ and a measure $\alpha$ on $X\times Y$ such that $(o_X, o_Y)\in R$, $\distortion(R)\leq 2\epsilon$ and $D(\alpha;\mu_X,\mu_Y)+ \alpha(R^c) \leq \epsilon$. By part~\eqref{thm:GHPcompact:3} of the theorem, we may assume $\pi_{1*}\alpha\leq \mu_X$ and $\pi_{2*}\alpha\leq \mu_Y$. 
	Also, by replacing $R$ with its closure in $X\times Y$ if necessary, we might assume $R$ is closed without loss of generality. Let $X'=:(X',o_X,\mu'_X)$. 
	
	\begin{figure}
		\begin{center}
			\includegraphics[width=.5\textwidth]{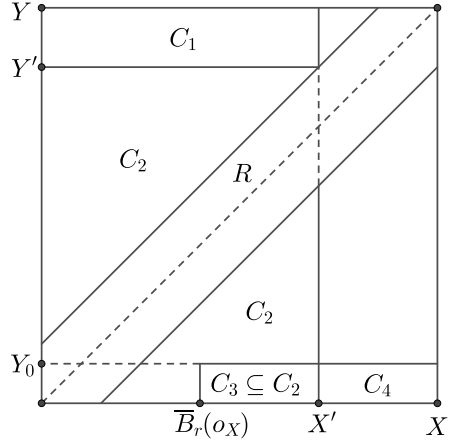}
			\caption{
				A schematic picture of the sets in the proof of Lemma~\ref{lem:GHP-monotone}. 
				The sets $X,Y,X',Y',Y_0$ and $\cball{r}{o_X}$ are depicted as intervals one of whose end points is the lower left corner of the figure and the other end is shown by a label.
			}
			\label{fig:ghp-monotone}
		\end{center}
	\end{figure}
	
	\eqref{part:lem:GHP-monotone:1}. Let $Y'$ be the set of points in $Y$ that $R$-correspond to some point in $X'$. Let $\alpha_1:=\restrict{\alpha}{X'\times Y'}$. By Lemma~1 of~\cite{Th96}, there exists a measures $\alpha'\leq \alpha_1$ on $X'\times Y'$ such that $\pi_{1*}\alpha'=\pi_{1*}\alpha_1\wedge \mu'_X$. Consider the measure $\mu'_Y:=\pi_{2*}\alpha'$ on $Y'$. We claim that $\mathcal Y':=(Y',o_Y,\mu'_Y)$ satisfies the desired property.
	Note that $Y'$ is a closed subset of $Y$, $o_Y\in Y'$ and $\mu'_Y\leq \pi_{2*}\alpha_1\leq \pi_{2*}\alpha\leq \mu_Y$. So $\mathcal Y'\preceq \mathcal Y$. Let $R':=R\cap (X'\times Y')$. The definition of $Y'$ gives that $R'$ is a correspondence between $X'$ and $Y'$ and $(o_X,o_Y)\in R'$. Also, it is clear that $\distortion(R')\leq \distortion(R)\leq 2\epsilon$. By Theorem~\ref{thm:GHPcompact}, it remains to prove that 
	\begin{equation}
	\label{eq:lem:GHP-monotone:0} 
	D(\alpha';\mu'_X,\mu'_Y)+ \alpha'((R')^c) \leq \epsilon.
	\end{equation}
	
	Let $C_1:=X'\times (Y\setminus Y')$ %
	and $C_2:=(X'\times Y') \setminus R'$ (see Figure~\ref{fig:ghp-monotone}). One has $\alpha'((R')^c) \leq \alpha(C_2)$. Since $\pi_{1*}\alpha'\leq \mu'_X$, one gets 
	\[
	\tv{\pi_{1*}\alpha'}{\mu'_X} = ||\mu'_X||-||\pi_{1*}\alpha'|| = ||\mu'_X||-||\pi_{1*}\alpha_1\wedge \mu'_X||.
	\]
	Since $\mu'_X$ and $\pi_{1*}\alpha_1$ are bounded by $\restrict{\mu_X}{X'}$, one can easily deduce that
	\begin{eqnarray}
	\nonumber \tv{\pi_{1*}\alpha'}{\mu'_X}&\leq& ||\restrict{\mu_X}{X'}||-||\pi_{1*}\alpha_1||\\
	\nonumber &=&\mu_X(X')-\alpha_1(X'\times Y')\\
	\nonumber &=& \mu_X(X')-\alpha(X'\times Y)+\alpha(C_1)\\
	\label{eq:lem:GHP-monotone:1} &\leq& \tv{\pi_{1*}\alpha}{\mu_X} + \alpha(C_1).
	\end{eqnarray}
	Since $\tv{\pi_{2*}\alpha'}{\mu'_Y}=0$, one gets that 
	\[
	D(\alpha';\mu'_X,\mu'_Y)\leq \tv{\pi_{1*}\alpha}{\mu_X} + \alpha(C_1)\leq D(\alpha;\mu_X,\mu_Y)+\alpha(C_1).
	\]
	Therefore,
	\begin{eqnarray*}
		D(\alpha';\restrict{\mu_X}{X'},\mu'_Y) + \alpha'((R')^c) &\leq & D(\alpha; \mu_X, \mu_Y) + \alpha(C_1\cup C_2)\\
		&\leq & D(\alpha; \mu_X, \mu_Y) + \alpha(R^c)\\
		&\leq & \epsilon,
	\end{eqnarray*}
	where the first inequality is because $C_1\cap C_2=\emptyset$ and the second inequality is because $C_1$ and $C_2$ are disjoint from $R$, which is easy to see. So, \eqref{eq:lem:GHP-monotone:0} is proved and the proof is completed.
	
	\eqref{part:lem:GHP-monotone:2}. 
	Let $Y_0:=\cball{r-2\epsilon}{o_Y}$. Define $Y', R', \alpha_1$ and $\alpha'$ as in part~\eqref{part:lem:GHP-monotone:1} and replace $\mu'_Y$ by $\mu''_Y:=\pi_{2*}\alpha'\vee \restrict{\mu_Y}{Y_0}$. 
	Let $y\in Y_0$ be arbitrary. Since $R$ is a correspondence, there exists $x\in X$ such that $(x,y)\in R$. Since $\distortion(R)\leq 2\epsilon$, one gets that $d(x,o_X)\leq d(y,o_Y)+2\epsilon\leq r$. This implies that $x\in \cball{r}{o_X}\subseteq X'$. The definition of $Y'$ implies that $y\in Y'$. Hence, $Y'\supseteq Y_0$ and so $\mu''_Y$ is supported on $Y'$. We will show that $\mathcal Y'':=(Y',o_Y,\mu''_Y)$ satisfies the claim.
	Note that  $(\restrict{\mu_Y}{Y_0}) \leq \mu''_Y \leq (\restrict{\mu_Y}{Y'})$. This gives that $\pcball{\mathcal Y}{r-2\epsilon}\preceq\mathcal Y''\preceq\mathcal Y$.
	
	Define $C_1$ and $C_2$ as in part~\eqref{part:lem:GHP-monotone:1}. 
	The proof of part~\eqref{part:lem:GHP-monotone:1} shows that $(o_X,o_Y)\in R'$, $\distortion(R')\leq \distortion(R)\leq 2\epsilon$, $\alpha'((R')^c)=\alpha'(C_2)$ and~\eqref{eq:lem:GHP-monotone:1} holds. To bound $\tv{\pi_{2*}\alpha'}{\mu''_Y}$, note that $\pi_{2*}\alpha'\leq \restrict{\mu_Y}{Y_0}$ on $Y_0$. So the definition of $\mu''_Y$ gives that
	\begin{eqnarray*}
		\tv{\pi_{2*}\alpha'}{\mu''_Y} &=& \mu_Y(Y_0)-\pi_{2*}\alpha'(Y_0)\\
		&=& \mu_Y(Y_0)-\alpha'(X'\times Y_0)\\
		&= & \mu_Y(Y_0)-\alpha'(\cball{r}{o_X}\times Y_0) - \alpha'(C_3),
	\end{eqnarray*}
	where $C_3:=(X'\setminus\cball{r}{o_X})\times Y_0$.
	Since $\mu'_X$ agrees with $\mu_X$ on $\cball{r}{o_X}$, one gets that $\pi_{1*}\alpha_1\leq \mu'_X$ on $\cball{r}{o_X}$. So the definition of $\alpha'$ implies that $\pi_{1*}\alpha'=\pi_{1*}\alpha_1$ on $\cball{r}{o_X}$. The condition $\alpha'\leq \alpha_1$ gives that $\alpha'=\alpha_1=\alpha$ on $ \cball{r}{o_X}\times Y'$. So, by letting $C_4:=(X\setminus X')\times Y_0$, the above equation gives
	\begin{eqnarray*}
		\tv{\pi_{2*}\alpha'}{\mu''_Y}&=& \mu_Y(Y_0)-\alpha(\cball{r}{o_X}\times Y_0) - \alpha'(C_3)\\
		&=& \mu_Y(Y_0)-\alpha(X\times Y_0) + \alpha(C_3\cup C_4) - \alpha'(C_3)\\
		&\leq& \tv{\mu_Y}{\pi_{2*}\alpha} + \alpha(C_3\cup C_4) - \alpha'(C_3).
	\end{eqnarray*}
	The above discussions show that $C_3\cap R=\emptyset$, which implies that $C_3\subseteq C_2$. Also, note that the four sets $C_1,C_2,C_4,R$ are pairwise disjoint. So, by summing up, we get
	\begin{eqnarray*}
		D(\alpha';\mu'_X,\mu''_Y) + \alpha'((R')^c) &\leq & \tv{\mu_X}{\pi_{1*}\alpha} + \alpha (C_1) + \\ 
		&&  \tv{\mu_Y}{\pi_{2*}\alpha}  + \alpha(C_3\cup C_4) - \alpha'(C_3) + \alpha'(C_2)\\
		&\leq &
		D(\alpha; \mu_X, \mu_Y) + \alpha(C_1\cup C_3 \cup C_4) + \alpha'(C_2\setminus C_3)\\
		&\leq &
		D(\alpha; \mu_X, \mu_Y) + \alpha(C_1\cup C_2 \cup C_4)\\
		&\leq & D(\alpha; \mu_X, \mu_Y) + \alpha(R^c)\\
		&\leq & \epsilon.
	\end{eqnarray*}
	Finally, Theorem~\ref{thm:GHPcompact} implies that  
	$\cghp(\mathcal X',\mathcal Y'')\leq \epsilon$ and the claim is proved.
\end{proof}

\begin{lemma}
	\label{lem:GHP-subsets}
	If $\mathcal X$ is a compact \pmm{} space, then the set of compact \pmm-subspaces of $\mathcal X$ is compact under the topology of the metric $\cghp$. 
\end{lemma}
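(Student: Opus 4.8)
The plan is to show that the set $\mathcal S$ of (equivalence classes of) compact \pmm-subspaces of a fixed compact \pmm{} space $\mathcal X = (X, o, \mu)$ is \emph{sequentially compact} under $\cghp$, which suffices since $\cghp$ is a metric. So let $(\mathcal X_n)_n$ be a sequence in $\mathcal S$ with $\mathcal X_n = (X_n, o, \mu_n)$, where $X_n$ is a closed (hence compact) subset of $X$ containing $o$ and $\mu_n \leq \mu$. The idea is to extract a subsequence along which $X_n$ converges in the Hausdorff metric $d_H$ on $\mathcal F(X)$ and simultaneously $\mu_n$ converges weakly, and then to argue that the common limit is again a compact \pmm-subspace of $\mathcal X$ and that $\cghp(\mathcal X_n, \cdot) \to 0$ along the subsequence via the inequality $\cghp \leq \hp$ from~\eqref{eq:hp-ineq}.

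First I would invoke compactness of $\mathcal F(X)$ under $d_H$ (stated in the excerpt, from Proposition~7.3.7/Theorem~7.3.8 of~\cite{bookBBI}, since $X$ is compact): pass to a subsequence so that $X_n \to X_\infty$ for some closed $X_\infty \subseteq X$ with $o \in X_\infty$ (the root is retained in the limit because $d_H(X_n, X_\infty) \to 0$ and $o \in X_n$ for all $n$). Next, since $\mu_n \leq \mu$ with $\mu$ a fixed finite measure, the family $\{\mu_n\}$ is tight and uniformly bounded, so by Prokhorov's theorem I can pass to a further subsequence with $\mu_n \to \mu_\infty$ weakly for some finite measure $\mu_\infty$ on $X$. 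The bound $\mu_n \leq \mu$ passes to the limit — for any closed $A$, $\mu_\infty(A) \geq \limsup_n \mu_n(A)$ fails in general, so instead I would use that for open $U$, $\mu_\infty(U) \leq \liminf_n \mu_n(U) \leq \mu(U)$, and then $\mu_\infty \leq \mu$ follows by outer regularity. One also needs $\supp(\mu_\infty) \subseteq X_\infty$: if $x \notin X_\infty$, then some open ball around $x$ is eventually disjoint from $X_n$ (since $d_H(X_n,X_\infty)\to 0$), so has $\mu_n$-measure zero eventually, hence $\mu_\infty$-measure zero. Thus $\mathcal X_\infty := (X_\infty, o, \mu_\infty)$ is a compact \pmm-subspace of $\mathcal X$, i.e., $\mathcal X_\infty \in \mathcal S$.

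It then remains to check convergence $\cghp(\mathcal X_n, \mathcal X_\infty) \to 0$ along the subsequence. Since both $\mathcal X_n$ and $\mathcal X_\infty$ are \pmm-subspaces of $\mathcal X$, by~\eqref{eq:hp-ineq} it suffices to show $\hp(\mathcal X_n, \mathcal X_\infty) = d_H(X_n, X_\infty) \vee \prokhorov(\mu_n, \mu_\infty) \to 0$. The first term tends to $0$ by construction. For the second, weak convergence of finite measures on the complete separable space $X$ is equivalent to convergence in the Prokhorov metric (recalled in the excerpt, from~\cite{bookKa17randommeasures}), so $\prokhorov(\mu_n, \mu_\infty) \to 0$ as well. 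Hence $\cghp(\mathcal X_n, \mathcal X_\infty) \to 0$, proving sequential compactness.

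The main obstacle I anticipate is the bookkeeping around the limit measure: ensuring that $\mu_\infty \leq \mu$ (handled via the open-set/portmanteau inequality plus regularity) and that $\supp(\mu_\infty) \subseteq X_\infty$ so that $\mathcal X_\infty$ genuinely lives on the limiting closed set. A subtlety worth a remark is that $X_\infty$ need not equal $\overline{\bigcup} \, \text{anything}$ natural; one only knows $X_\infty$ is \emph{some} Hausdorff limit, but that is all that is needed. Everything else is a direct application of the compactness of $\mathcal F(X)$, Prokhorov's theorem, the equivalence of weak and Prokhorov convergence, and the inequality $\cghp \leq \hp$ already recorded in Definition~\ref{def:measuredSubspace}.
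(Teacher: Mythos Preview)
Your proof is correct and follows essentially the same approach as the paper: reduce to compactness under $\hp$ via~\eqref{eq:hp-ineq}, extract a Hausdorff-convergent subsequence of the $X_n$ via Blaschke's theorem and a weakly convergent subsequence of the $\mu_n$ via tightness/Prokhorov, and then verify that the limit is again a \pmm-subspace. In fact you spell out the details (that $o\in X_\infty$, $\mu_\infty\leq\mu$, and $\supp(\mu_\infty)\subseteq X_\infty$) that the paper leaves to the reader.
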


\begin{proof}
	By~\eqref{eq:hp-ineq}, it is enough to show that the set of compact \pmm-subspaces of $\mathcal X$ is compact under the metric $\hp$. Let $\mathcal X=:(X,o,\mu)$ and consider a sequence $\mathcal X_n=(X_n,o,\mu_n)$ of \pmm{}-subspaces of $\mathcal X$.
	Blaschke's theorem (see e.g., Theorem~7.3.8 in~\cite{bookBBI}) implies that the set of compact subsets of $X$ is compact under $\hausdorff$. Also, the set of measures on $X$ which are bounded by $\mu$ is tight and closed (under weak convergence). So Prokhorov's theorem implies that the latter is compact. %
	So by passing to a subsequence, one may assume that $\hausdorff(X_n,Y)\to 0$ and $\prokhorov(\mu_n,\nu)\to 0$ for some compact subset $Y\subseteq X$ and some measure $\nu\leq \mu$. It is left to the reader to show that $o\in Y$ and $\nu$ is supported on $Y$. This implies that $\hp(\mathcal X_n, (Y,o,\nu))\to 0$ and the claim is proved.
	\iffalse
	Assume $\mathcal X_n:=(X_n,o_X,\mu_n)$ is a sequence of compact \pmm-subspaces of $\mathcal X$ ($n\in\mathbb N$). Since $X$ is compact, Blaschke's theorem (see e.g., Theorem~7.3.8 in~\cite{bookBBI}) implies that the sequence $(X_n)_n$ has a convergent subsequence under the Hausdorff metric. So assume $X_n\rightarrow X'$ under the metric $d_H$ from the beginning. Since $\mu_i\leq \mu_X$, the sequence $(\mu_n)_n$ is tight. So, Prokhorov's theorem~\cite{Pr56} implies that there is a convergent subsequence of $(\mu_n)_n$. This implies the claim.
	\fi
\end{proof}

\subsection{The Metric in the Boundedly-Compact Case}
\label{subsec:ghp-nonCompact}
This subsection presents the definition of the Gromov-Hausdorff-Prokhorov metric in the boundedly-compact case and proves that it is indeed a metric. Meanwhile, K\"onig's infinity lemma is generalized to compact sets (Lemma~\ref{lem:infinity}) and is used in the proofs. The Gromov-Hausdorff metric is a special case and will be discussed in Subsection~\ref{subsec:GH}. %

Let $\mathcal X$ and $\mathcal Y$
be {boundedly-compact} \pmm{} spaces. According to the heuristic mentioned in the introduction, the idea is that $\mathcal X$ and $\mathcal Y$ are \textit{close} if two \textit{large} compact portions of the two spaces are close under the metric $\cghp$. 
In the definition, for a fixed large $r$, the ball $\pcball{\mathcal X}{r}$ is not needed to be close to $\pcball{\mathcal Y}{r}$ due to the points that are close to the boundaries of the balls. Instead, the former should be close to a \textit{perturbation} of the latter.
This is made precise in the following (see Remark~\ref{rem:variant} for another definition and also Theorem~\ref{thm:convergence}). 

For $r\geq \epsilon\geq 0$, define
\begin{equation}
\label{eq:a_r}
a(\epsilon,r;\mathcal X,\mathcal Y):=\inf\{\cghp(\pcball{\mathcal X}{r}, \mathcal Y')\},
\end{equation}
where the infimum is over all compact \pmm-subspaces $\mathcal Y'$ of $\mathcal Y$ (Definition~\ref{def:measuredSubspace}) such that $\pcball{\mathcal Y}{r-\epsilon}\preceq\mathcal Y'\preceq \mathcal Y$ (by removing the condition $\pcball{\mathcal Y}{r-\epsilon}\preceq\mathcal Y'$, all of the results will remain valid except maybe those in Subsection~\ref{subsec:convergence}). %
Lemma~\ref{lem:a-inf} below proves that the infimum is attained.
The case $r=1/\epsilon$ is mostly used in the following. So, for $0<\epsilon\leq 1$, define 
\[
a_{\epsilon}(\mathcal X, \mathcal Y):=a(\epsilon,1/\epsilon;\mathcal X, \mathcal Y).
\]
Of course, this is not a symmetric function of $\mathcal X$ and $\mathcal Y$. 
\begin{definition}
	\label{def:GHP}
	Let $\mathcal X$ and $\mathcal Y$ be boundedly-compact \pmm{} spaces. The \defstyle{Gromov-Hausdorff-Prokhorov (GHP) distance} of $\mathcal X$ and $\mathcal Y$ is defined by
	\begin{equation}
	\label{eq:GHP}
	\ghp(\mathcal X,\mathcal Y):= \inf\{ \epsilon\in (0,1]: 
	a_{\epsilon}(\mathcal X,\mathcal Y) \vee a_{\epsilon}(\mathcal Y,\mathcal X)
	< \frac{\epsilon}{2}\},
	\end{equation}
	with the convention that $\inf \emptyset:=1$.
\end{definition}
In fact, Lemma~\ref{lem:GHP-ineqs} below implies that the infimum in~\eqref{eq:GHP} is not attained. 
Note that we always have 
\begin{equation}
\label{eq:GHP<1}
0\leq \ghp(\mathcal X,\mathcal Y)\leq 1.
\end{equation}

The following theorem is the main result of this subsection. Further properties of the function $\ghp$ are discussed in the next subsections.

\begin{theorem}
	\label{thm:GHP-Metric}
	The GHP distance~\eqref{eq:GHP} induces a metric on $\mmstar$.
\end{theorem}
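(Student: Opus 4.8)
The plan is to verify the three metric axioms for $\ghp$ on $\mmstar$: finiteness and vanishing on the diagonal (already partly recorded in~\eqref{eq:GHP<1}), symmetry, separation of points, and the triangle inequality. Symmetry is immediate from the definition~\eqref{eq:GHP} since the condition $a_{\epsilon}(\mathcal X,\mathcal Y)\vee a_{\epsilon}(\mathcal Y,\mathcal X)<\epsilon/2$ is manifestly symmetric in $\mathcal X,\mathcal Y$. For the diagonal, if $\mathcal X$ and $\mathcal Y$ are GHP-isometric then $\pcball{\mathcal X}{r}$ is GHP-isometric to $\pcball{\mathcal Y}{r}$ (which is an admissible choice of $\mathcal Y'$ since $\pcball{\mathcal Y}{r-\epsilon}\preceq\pcball{\mathcal Y}{r}\preceq\mathcal Y$), so $a_\epsilon(\mathcal X,\mathcal Y)=0<\epsilon/2$ for every $\epsilon\in(0,1]$, giving $\ghp(\mathcal X,\mathcal Y)=0$.

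The substantive work is in the two remaining axioms, and I expect these to rest on auxiliary lemmas that the excerpt announces but has not yet stated: the attainment of the infimum in~\eqref{eq:a_r} (``Lemma~\ref{lem:a-inf}''), monotonicity-type estimates relating $a(\epsilon,r;\cdot,\cdot)$ at different parameters (analogous to Lemma~\ref{lem:GHP-monotone}), and ``Lemma~\ref{lem:GHP-ineqs}''. For \textbf{separation of points}, suppose $\ghp(\mathcal X,\mathcal Y)=0$. Then for every $\epsilon\in(0,1]$ there is $\epsilon'\le\epsilon$ with $a_{\epsilon'}(\mathcal X,\mathcal Y)\vee a_{\epsilon'}(\mathcal Y,\mathcal X)<\epsilon'/2$; in particular, taking $\epsilon=1/n$, one extracts for each large $r$ a compact \pmm-subspace $\mathcal Y_r'$ of $\mathcal Y$ with $\pcball{\mathcal Y}{r-\epsilon}\preceq\mathcal Y_r'\preceq\mathcal Y$ and $\cghp(\pcball{\mathcal X}{r},\mathcal Y_r')\to 0$ along a sequence $r\to\infty$. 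Using Lemma~\ref{lem:radius} one controls the radii so that $\mathcal Y_r'$ is squeezed between $\pcball{\mathcal Y}{r-\epsilon}$ and $\pcball{\mathcal Y}{r+o(r)}$; combined with Lemma~\ref{lem:GHP-subsets} (compactness of the set of subspaces) and a diagonal extraction, one obtains pointed isometric embeddings of larger and larger balls of $\mathcal X$ that are GHP-close to the corresponding balls of $\mathcal Y$. A compactness/diagonalization argument — this is exactly where the promised generalization of König's infinity lemma to compact sets (``Lemma~\ref{lem:infinity}'') should be invoked — then produces a GHP-isometry $\mathcal X\to\mathcal Y$, so that $\mathcal X=\mathcal Y$ in $\mmstar$.

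For the \textbf{triangle inequality}, let $\mathcal X,\mathcal Y,\mathcal Z$ be given with $\ghp(\mathcal X,\mathcal Y)<s$ and $\ghp(\mathcal Y,\mathcal Z)<t$, and the goal is $\ghp(\mathcal X,\mathcal Z)\le s+t$ (it suffices to treat $s+t<1$). Pick $\epsilon_1<s$ and $\epsilon_2<t$ witnessing the two hypotheses, so $a_{\epsilon_1}(\mathcal X,\mathcal Y)<\epsilon_1/2$, $a_{\epsilon_1}(\mathcal Y,\mathcal X)<\epsilon_1/2$, and similarly with $\epsilon_2$ for $\mathcal Y,\mathcal Z$. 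Set $\epsilon:=\epsilon_1+\epsilon_2$ (or a value slightly larger, to be chosen so that $1/\epsilon$ sits correctly relative to $1/\epsilon_1$ and $1/\epsilon_2$). We need to produce a compact \pmm-subspace $\mathcal Z'$ of $\mathcal Z$ with $\pcball{\mathcal Z}{1/\epsilon-\epsilon}\preceq\mathcal Z'\preceq\mathcal Z$ and $\cghp(\pcball{\mathcal X}{1/\epsilon},\mathcal Z')<\epsilon/2$. The idea is: first find $\mathcal Y'\preceq\mathcal Y$ with $\cghp(\pcball{\mathcal X}{1/\epsilon},\mathcal Y')$ small and $\pcball{\mathcal Y}{1/\epsilon-\epsilon_1}\preceq\mathcal Y'$; then apply the monotonicity Lemma~\ref{lem:GHP-monotone}(ii) to the inclusion $\pcball{\mathcal Y}{1/\epsilon_2}\preceq(\text{something})\preceq\mathcal Y$ together with $a_{\epsilon_2}(\mathcal Y,\mathcal Z)<\epsilon_2/2$ to transport $\mathcal Y'$ across to a subspace $\mathcal Z'$ of $\mathcal Z$, losing only $\cghp\le\epsilon_2/2$ and shrinking the guaranteed ball by at most $2\cdot(\epsilon_2/2)=\epsilon_2$; the distances add up via the triangle inequality for $\cghp$ in the compact case. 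The bookkeeping of radii — ensuring $1/\epsilon-\epsilon\le 1/\epsilon_2$ and that the intermediate ball $\pcball{\mathcal Y}{1/\epsilon-\epsilon_1}$ contains $\pcball{\mathcal Y}{1/\epsilon_2}$ so that Lemma~\ref{lem:GHP-monotone}(ii) applies, and that all the sub-$\epsilon/2$ slacks genuinely combine to something $<\epsilon/2$ — is the main obstacle, and will likely require choosing the witnessing parameters with a little room (e.g.\ $\epsilon$ strictly between $\epsilon_1+\epsilon_2$ and $s+t$) and invoking Lemma~\ref{lem:GHP-ineqs}. Once a single such $\epsilon<s+t$ is found with $a_\epsilon(\mathcal X,\mathcal Z)\vee a_\epsilon(\mathcal Z,\mathcal X)<\epsilon/2$, the definition gives $\ghp(\mathcal X,\mathcal Z)\le\epsilon<s+t$; letting $s\downarrow\ghp(\mathcal X,\mathcal Y)$ and $t\downarrow\ghp(\mathcal Y,\mathcal Z)$ finishes the proof.
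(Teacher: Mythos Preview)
Your overall strategy matches the paper's proof exactly: symmetry and the diagonal are trivial, the triangle inequality is proved by chaining Lemma~\ref{lem:GHP-monotone} through an intermediate $\mathcal Y'\preceq\mathcal Y$, and separation of points is obtained by extracting limits via Lemma~\ref{lem:GHP-subsets} and then gluing with the compact K\"onig lemma (Lemma~\ref{lem:infinity}). Two specific points in your sketch, however, are not quite right and are precisely where the paper does the real work.

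\textbf{Triangle inequality.} Your radii bookkeeping is stated in the wrong direction: you ask that $\pcball{\mathcal Y}{1/\epsilon-\epsilon_1}$ contain $\pcball{\mathcal Y}{1/\epsilon_2}$, but $1/(\epsilon_1+\epsilon_2)-\epsilon_1<1/\epsilon_2$, so this is false. What is actually needed (and what the paper does) is the \emph{opposite} inclusion $\mathcal Y'\preceq\pcball{\mathcal Y}{1/\epsilon_2}$, so that Lemma~\ref{lem:GHP-monotone} can be applied with the pair $(\pcball{\mathcal Y}{1/\epsilon_2},\mathcal Z')$. This inclusion does not come from the lower bound $\pcball{\mathcal Y}{1/\epsilon-\epsilon_1}\preceq\mathcal Y'$ at all; it comes from Lemma~\ref{lem:radius}, which gives $\mathcal Y'\preceq\pcball{\mathcal Y}{1/(\epsilon_1+\epsilon_2)+\epsilon_1}$, together with the elementary but crucial inequality
\[
\epsilon_1+\frac{1}{\epsilon_1+\epsilon_2}<\frac{1}{\epsilon_2}\qquad\text{whenever }0<\epsilon_1+\epsilon_2<1.
\]
This is the step you flagged as ``the main obstacle'' but did not identify; once you have it, no extra slack in the parameters is needed and the combination $\epsilon_1/2+\epsilon_2/2<(\epsilon_1+\epsilon_2)/2$ works on the nose.

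\textbf{Separation of points.} Your argument produces, for each $r$, a GHP-isometry from $\pcball{\mathcal X}{r}$ onto some \pmm-subspace $\mathcal Y'\preceq\pcball{\mathcal Y}{r}$, but K\"onig's lemma is to be applied to the (compact) sets $C_n$ of GHP-isometries $\pcball{\mathcal X}{n}\to\pcball{\mathcal Y}{n}$, and you have not shown these are nonempty. The paper closes this gap by also using the reverse direction $a_\epsilon(\mathcal Y,\mathcal X)<\epsilon/2$ to find $\mathcal X'\preceq\pcball{\mathcal X}{r}$ GHP-isometric to $\pcball{\mathcal Y}{r}$; composing the two maps gives a measure-preserving isometric embedding of the compact space $\pcball{\mathcal X}{r}$ into itself, which is forced to be surjective, whence $\mathcal X'=\pcball{\mathcal X}{r}$ and $\pcball{\mathcal X}{r}\cong\pcball{\mathcal Y}{r}$. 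Only then does Lemma~\ref{lem:infinity} apply.
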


To prove this theorem, the following lemmas are needed.

\begin{lemma}[K\"onig's Infinity Lemma For Compact Sets]
	\label{lem:infinity}
	Let $C_n$ be a compact set for each $n\in\mathbb N$ and $f_n:C_n\rightarrow C_{n-1}$ be a continuous function for $n>1$. Then, there exists a sequence $x_1\in C_1, x_2\in C_2,\ldots$ such that $f_n(x_n)=x_{n-1}$ for each $n>1$.
\end{lemma}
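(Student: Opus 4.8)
The plan is to prove K\"onig's infinity lemma for compact sets by a diagonalization-type argument that exploits compactness at each level, reducing it to the statement that a nested sequence of nonempty compact sets has nonempty intersection.

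First I would set up the relevant compact sets. For each $n$, let $g_n := f_2\circ f_3\circ\cdots\circ f_n : C_n\to C_1$ be the composition (with $g_1$ the identity on $C_1$); each $g_n$ is continuous as a composition of continuous maps, so $K_n := g_n(C_n)\subseteq C_1$ is a compact subset of $C_1$, and it is nonempty since $C_n\neq\emptyset$ (each $C_n$ is compact; I would note that the lemma is vacuous if some $C_n$ is empty, or one simply assumes the $C_n$ nonempty as is implicit). The key observation is that $f_{n+1}(C_{n+1})\subseteq C_n$ implies $g_{n+1}(C_{n+1}) = g_n(f_{n+1}(C_{n+1}))\subseteq g_n(C_n)$, i.e.\ $K_{n+1}\subseteq K_n$. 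Thus $(K_n)_n$ is a decreasing sequence of nonempty compact subsets of $C_1$, so $\bigcap_n K_n\neq\emptyset$; pick $x_1$ in this intersection.

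Next I would build the sequence recursively, maintaining at each stage the invariant that the partial choice can still be extended arbitrarily far. Precisely, having chosen $x_1,\ldots,x_k$ with $f_{j}(x_{j})=x_{j-1}$ for $2\le j\le k$ and with the property that for every $m\ge k$ there exists $y\in C_m$ with $f_{k+1}\circ\cdots\circ f_m(y)=x_k$ (equivalently $x_k$ lies in the image of $C_m$ under the appropriate composition, for all $m$), I want to choose $x_{k+1}\in C_{k+1}$ with $f_{k+1}(x_{k+1})=x_k$ and the analogous extendability property at level $k+1$. To do this, consider for each $m\ge k+1$ the set
\[
S_m := \{\, y\in C_{k+1} : f_{k+1}(y)=x_k \text{ and } \exists z\in C_m \text{ with } f_{k+2}\circ\cdots\circ f_m(z)=y \,\}.
\]
Each $S_m$ is the intersection of $f_{k+1}^{-1}(\{x_k\})$ (closed, hence compact in $C_{k+1}$) with the image of $C_m$ under a continuous map (compact), so $S_m$ is compact; it is nonempty by the inductive extendability hypothesis applied at level $m$; and $S_{m+1}\subseteq S_m$ by the same image-nesting observation as before. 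Hence $\bigcap_{m\ge k+1}S_m\neq\emptyset$, and any element of it serves as $x_{k+1}$, preserving the invariant.

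The base case is exactly the choice of $x_1\in\bigcap_n K_n$ made above, which says precisely that $x_1$ is extendable to all levels. Running the recursion produces the desired sequence $(x_n)_n$ with $f_n(x_n)=x_{n-1}$ for all $n>1$. The only genuinely substantive point — the main obstacle if one is not careful — is getting the extendability invariant stated correctly so that the nested-compact-sets argument applies at each step; the topological inputs themselves (continuous images of compact sets are compact, preimages of points under continuous maps are closed, and a decreasing sequence of nonempty compact sets has nonempty intersection) are standard. I would also remark that no metric structure is really used, only compactness and continuity, so the lemma holds for compact Hausdorff spaces, though the metric (indeed boundedly-compact) setting is all that is needed in the sequel.
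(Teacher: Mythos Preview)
Your proof is correct and follows essentially the same approach as the paper: both maintain the inductive invariant that the chosen $x_k$ lies in the image of every $C_m$ ($m\ge k$) under the composed map, and both pick $x_{k+1}$ from the nonempty intersection of the fiber $f_{k+1}^{-1}(\{x_k\})$ with the nested compact images of the $C_m$. The only cosmetic difference is that the paper adjoins a one-point space $C_0$ to absorb the base case into the induction, whereas you handle level $1$ separately.
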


This lemma is a generalization of K\"{o}nig's infinity lemma, which is the special case where each $C_n$ is a finite set.
\begin{proof}%
	Let $C_0$ be a single point and $f_1:C_1\rightarrow C_0$ be the unique function. For $m>n$, let $f_{m,n}:=f_{n+1}\circ \cdots \circ f_m$. Note that for every $n$, the sets $f_{m,n}(C_{m})$ for $m=n+1,n+2,\ldots$ are nested. We will define the sequence $x_n\in C_n$ inductively such that $x_n$ is in the image of $f_{m,n}$ for every $m>n$. Let $x_0:=0$ which has that property. Assuming $x_{n-1}$ is defined, let $x_{n}$ be an arbitrary point in the intersection of $f_n^{-1}(x_{n-1})$ and $\bigcap_{m=n+1}^{\infty}f_{m,n}(C_m)$ (note that the intersection is nonempty by compactness and the induction hypothesis). It can be seen that $x_n$ satisfies the induction claim and the lemma is proved.
\end{proof}

\begin{lemma}
	\label{lem:a-inf}
	The infimum in~\eqref{eq:a_r} is attained. 
\end{lemma}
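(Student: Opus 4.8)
The plan is to show that the infimum defining $a(\epsilon,r;\mathcal X,\mathcal Y)$ is attained by exhibiting a minimizing sequence of admissible $\mathcal Y'$ and extracting a convergent subsequence whose limit is still admissible. First I would observe that every compact $\pmm$-subspace $\mathcal Y'$ with $\pcball{\mathcal Y}{r-\epsilon}\preceq\mathcal Y'\preceq\mathcal Y$ is in particular a compact $\pmm$-subspace of $\pcball{\mathcal Y}{s}$ for a suitable finite $s$. Indeed, by Lemma~\ref{lem:radius} applied to $\pcball{\mathcal X}{r}$ and any such $\mathcal Y'$, if $\cghp(\pcball{\mathcal X}{r},\mathcal Y')$ is close to the infimum (hence bounded, say by $r$), then every point of $\mathcal Y'$ lies within distance $r + 2r = 3r$ of $o_{\mathcal Y}$; so WLOG all $\mathcal Y'$ in the minimizing sequence are $\pmm$-subspaces of the fixed compact space $\mathcal Z:=\pcball{\mathcal Y}{3r}$.

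Next I would invoke Lemma~\ref{lem:GHP-subsets}: the set of compact $\pmm$-subspaces of $\mathcal Z$ is compact under $\cghp$. Take a minimizing sequence $\mathcal Y'_n$ and pass to a subsequence converging in $\cghp$ to some compact $\pmm$-subspace $\mathcal Y_\infty$ of $\mathcal Z$; then $\cghp(\pcball{\mathcal X}{r},\mathcal Y_\infty) = \lim_n \cghp(\pcball{\mathcal X}{r},\mathcal Y'_n) = \inf$ by continuity of $\cghp(\pcball{\mathcal X}{r},\cdot)$. It remains to check that $\mathcal Y_\infty$ is admissible, i.e. $\pcball{\mathcal Y}{r-\epsilon}\preceq\mathcal Y_\infty\preceq\mathcal Y$. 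The condition $\mathcal Y_\infty\preceq\mathcal Y$ is automatic since $\mathcal Y_\infty\preceq\mathcal Z\preceq\mathcal Y$. For the lower bound one needs the convergence in Lemma~\ref{lem:GHP-subsets} to actually be convergence in the Hausdorff--Prokhorov metric $\hp$ of Definition~\ref{def:measuredSubspace} (which is how that lemma is proved: $\hausdorff(Y'_n,Y_\infty)\to 0$ and $\prokhorov(\mu'_n,\mu_\infty)\to 0$ after passing to a further subsequence). From $Y'_n\supseteq \cball{r-\epsilon}{o_{\mathcal Y}}$ and Hausdorff convergence one gets $Y_\infty\supseteq \cball{r-\epsilon}{o_{\mathcal Y}}$ (a closed ball is closed, and a Hausdorff limit of closed sets each containing a fixed compact set still contains it); similarly from $\mu'_n\geq \restrict{\mu_{\mathcal Y}}{\cball{r-\epsilon}{o_{\mathcal Y}}}$ and Prokhorov convergence, the limit $\mu_\infty$ dominates $\restrict{\mu_{\mathcal Y}}{\cball{r-\epsilon}{o_{\mathcal Y}}}$ (the set of measures dominating a fixed measure is closed under weak convergence). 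Hence $\mathcal Y_\infty$ is admissible and attains the infimum.

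The main obstacle I anticipate is the last step: verifying that the limiting subspace still satisfies the lower constraint $\pcball{\mathcal Y}{r-\epsilon}\preceq\mathcal Y_\infty$. This forces one to work with the stronger $\hp$-convergence rather than mere $\cghp$-convergence, and to argue that the inequalities $X_n\supseteq$ (fixed ball) and $\mu_n\geq$ (fixed measure) are preserved in the Hausdorff and weak limits respectively; the measure part uses that weak convergence together with $\mu_n\geq\lambda$ for a fixed $\lambda$ gives $\mu_\infty\geq\lambda$, which follows by testing against nonnegative continuous functions. Everything else is a routine compactness-plus-continuity argument. A cosmetic point to handle at the start is the degenerate possibility that no admissible $\mathcal Y'$ exists, but in fact $\mathcal Y'=\mathcal Y$ (when $\mathcal Y$ is compact) or, more generally, $\pcball{\mathcal Y}{s}$ for large $s$ is always admissible, so the infimum is over a nonempty set.
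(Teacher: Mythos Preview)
Your proposal is correct and follows essentially the same route as the paper: the paper's proof is the one-liner ``implied by Lemma~\ref{lem:GHP-subsets} and the fact that $\cghp$ is a metric on $\mmcstar$,'' and your argument spells out exactly what that entails---reducing to subspaces of a fixed large ball, extracting a convergent subsequence via Lemma~\ref{lem:GHP-subsets}, and (going slightly beyond what the paper makes explicit) checking that the limit still satisfies the lower constraint $\pcball{\mathcal Y}{r-\epsilon}\preceq\mathcal Y_\infty$ by working with the $\hp$-convergence used in the proof of that lemma. The only cosmetic point is that your bound ``$3r$'' presumes the infimum is at most $r$; in general one just uses that the infimum is finite (e.g.\ $\mathcal Y'=\pcball{\mathcal Y}{r}$ is admissible) to get some uniform radius bound via Lemma~\ref{lem:radius}.
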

\begin{proof}%
	The claim is implied by Lemma~\ref{lem:GHP-subsets} and the fact that $\cghp$ is a metric on $\mmcstar$.
\end{proof}

\begin{lemma}
	\label{lem:a}
	The number $a(\epsilon,r;\mathcal X,\mathcal Y)$ is non-increasing w.r.t. $\epsilon$. Moreover, if $a(\epsilon, r_0;\mathcal X,\mathcal Y)\leq\frac{\epsilon}{2}$, then $a(\epsilon,r;\mathcal X,\mathcal Y)$ is non-decreasing w.r.t. $r$ in the interval $r\in [\epsilon,r_0]$.
\end{lemma}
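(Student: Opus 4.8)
The plan is to prove the two monotonicity statements separately, both by exhibiting, for the larger parameter, a competitor $\pmm$-subspace built from an optimal one for the smaller parameter.

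First I would handle monotonicity in $\epsilon$ for fixed $r$. Suppose $0\le\epsilon\le\epsilon'$ (with $r\ge\epsilon'$). If $\mathcal Y'$ is any compact $\pmm$-subspace with $\pcball{\mathcal Y}{r-\epsilon}\preceq\mathcal Y'\preceq\mathcal Y$, then since $r-\epsilon'\le r-\epsilon$ we have $\pcball{\mathcal Y}{r-\epsilon'}\preceq\pcball{\mathcal Y}{r-\epsilon}\preceq\mathcal Y'$, so the same $\mathcal Y'$ is admissible in the infimum defining $a(\epsilon',r;\mathcal X,\mathcal Y)$. Hence the admissible set only grows as $\epsilon$ increases, and the infimum can only decrease; that is $a(\epsilon',r;\mathcal X,\mathcal Y)\le a(\epsilon,r;\mathcal X,\mathcal Y)$.

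The second statement is the substantive one, and I expect it to be the main obstacle because it is where Lemma~\ref{lem:GHP-monotone}\eqref{part:lem:GHP-monotone:2} must be invoked with exactly the right bookkeeping of radii. Fix $\epsilon$ and suppose $a(\epsilon,r_0;\mathcal X,\mathcal Y)\le\epsilon/2$; take $r,r'$ with $\epsilon\le r\le r'\le r_0$. By Lemma~\ref{lem:a-inf} choose an optimal compact $\pmm$-subspace $\mathcal Z$ of $\mathcal Y$ with $\pcball{\mathcal Y}{r_0-\epsilon}\preceq\mathcal Z\preceq\mathcal Y$ and $\cghp(\pcball{\mathcal X}{r_0},\mathcal Z)=a(\epsilon,r_0;\mathcal X,\mathcal Y)\le\epsilon/2$. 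I want to compare $a(\epsilon,r;\mathcal X,\mathcal Y)$ and $a(\epsilon,r';\mathcal X,\mathcal Y)$; the idea is that $\pcball{\mathcal X}{r'}$ is a compact $\pmm$-subspace of $\pcball{\mathcal X}{r_0}$ containing $\pcball{\mathcal X}{r}$ — more precisely $\pcball{(\pcball{\mathcal X}{r_0})}{r'}=\pcball{\mathcal X}{r'}$ and $\pcball{\pcball{\mathcal X}{r_0}}{r}\preceq\pcball{\mathcal X}{r'}\preceq\pcball{\mathcal X}{r_0}$. Apply Lemma~\ref{lem:GHP-monotone}\eqref{part:lem:GHP-monotone:2} to the pair $(\pcball{\mathcal X}{r_0},\mathcal Z)$, whose cGHP distance is some $\delta\le\epsilon/2$, with the $\pmm$-subspace $\mathcal X':=\pcball{\mathcal X}{r'}$ and radius parameter $r'$ (note $r'\ge 2\delta$ since $r'\ge r\ge\epsilon\ge 2\delta$): this yields a compact $\pmm$-subspace $\mathcal Z'$ of $\mathcal Z$ with $\pcball{\mathcal Z}{r'-2\delta}\preceq\mathcal Z'\preceq\mathcal Z$ and $\cghp(\pcball{\mathcal X}{r'},\mathcal Z')\le\delta=a(\epsilon,r_0;\mathcal X,\mathcal Y)$.

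It remains to check that $\mathcal Z'$ is admissible in the infimum defining $a(\epsilon,r';\mathcal X,\mathcal Y)$, i.e.\ that $\pcball{\mathcal Y}{r'-\epsilon}\preceq\mathcal Z'\preceq\mathcal Y$. The upper inclusion is immediate since $\mathcal Z'\preceq\mathcal Z\preceq\mathcal Y$. For the lower one, $\mathcal Z\succeq\pcball{\mathcal Y}{r_0-\epsilon}$ gives $\pcball{\mathcal Z}{s}=\pcball{\mathcal Y}{s}$ for all $s\le r_0-\epsilon$; since $2\delta\le\epsilon$ we have $r'-2\delta\ge r'-\epsilon$, and combined with $r'\le r_0$ this gives $r'-2\delta\le r_0-\epsilon$ only when $r'\le r_0-\epsilon+2\delta$, which need not hold — so here I would instead argue directly that $\mathcal Z'\succeq\pcball{\mathcal Z}{r'-2\delta}\succeq\pcball{\mathcal Z}{r'-\epsilon}$ and that $\pcball{\mathcal Z}{r'-\epsilon}\succeq\pcball{\mathcal Y}{r'-\epsilon}$ because $\mathcal Z\succeq\pcball{\mathcal Y}{r_0-\epsilon}\succeq\pcball{\mathcal Y}{r'-\epsilon}$ and taking the $(r'-\epsilon)$-ball of a $\pmm$-subspace containing $\pcball{\mathcal Y}{r'-\epsilon}$ still contains $\pcball{\mathcal Y}{r'-\epsilon}$. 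Hence $\mathcal Z'$ is admissible, so $a(\epsilon,r';\mathcal X,\mathcal Y)\le\cghp(\pcball{\mathcal X}{r'},\mathcal Z')\le a(\epsilon,r_0;\mathcal X,\mathcal Y)\le\epsilon/2$; in particular the hypothesis $a(\epsilon,\cdot;\mathcal X,\mathcal Y)\le\epsilon/2$ propagates down to $r'$, and re-running the same argument with $r'$ in place of $r_0$ and $r$ in place of $r'$ gives $a(\epsilon,r;\mathcal X,\mathcal Y)\le a(\epsilon,r';\mathcal X,\mathcal Y)$, which is the desired monotonicity on $[\epsilon,r_0]$. The delicate point throughout — the main obstacle — is keeping the chain of $\preceq$-inclusions among balls of the various spaces consistent while the radii shrink by the $2\delta$ from Lemma~\ref{lem:GHP-monotone}; I would state a small auxiliary observation ("if $\mathcal A\succeq\pcball{\mathcal B}{s}$ then $\pcball{\mathcal A}{s}=\pcball{\mathcal B}{s}$, and $\pcball{\mathcal A}{t}\succeq\pcball{\mathcal B}{t}$ for $t\le s$") at the start of the proof to make these steps routine.
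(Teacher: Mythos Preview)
Your proof is correct and follows essentially the same route as the paper: the first claim is handled by inclusion of admissible sets, and the second by applying Lemma~\ref{lem:GHP-monotone}\eqref{part:lem:GHP-monotone:2} to an optimal $\mathcal Y'$ at level $r_0$ to produce an admissible competitor at level $r$. The paper is terser---it simply proves $a(\epsilon,r;\mathcal X,\mathcal Y)\le a(\epsilon,r_0;\mathcal X,\mathcal Y)$ for all $r\in[\epsilon,r_0)$ and declares this ``enough,'' leaving the bootstrap (that the hypothesis then holds at every intermediate $r'$, so the argument re-runs) implicit---whereas you spell out that two-step structure explicitly. Your auxiliary observation about balls of $\pmm$-subspaces is exactly what makes the radius bookkeeping go through in both proofs; the paper compresses it into the single sentence ``Since $2\delta\le\epsilon$ and $r<r_0$ by assumption, one gets that $\pcball{\mathcal Y}{r-\epsilon}\preceq\mathcal Y''$.''
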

\begin{proof}%
	The first claim is easy to check. For the second claim, it is enough to prove that for $r\in [\epsilon, r_0)$, one has $a(\epsilon,r;\mathcal X,\mathcal Y)\leq a(\epsilon,r_0;\mathcal X,\mathcal Y)$. 
	
	Let $a(\epsilon,r_0;\mathcal X,\mathcal Y)=:\delta\leq\frac{\epsilon}{2}$. 
	By Lemma~\ref{lem:a-inf}, there is a compact \pmm-subspace $\mathcal Y'$ of $\mathcal Y$ such that $\pcball{\mathcal Y}{r_0-\epsilon}\preceq \mathcal Y'$ and $\cghp(\pcball{\mathcal X}{r_0}, \mathcal Y')\leq \delta$. 
	By Lemma~\ref{lem:GHP-monotone}, there is a further compact \pmm-subspace $\mathcal Y''$ of $\mathcal Y'$ 
	such that $\pcball{\mathcal Y'}{r-2\delta} \preceq \mathcal Y''$ and  $\cghp(\pcball{\mathcal X}{r}, \mathcal Y'')\leq \delta$. 
	Since $2\delta\leq \epsilon$ and $r<r_0$ by assumption, one gets that $\pcball{\mathcal Y}{r-\epsilon}\preceq\mathcal Y''$. 
	Therefore, $a(\epsilon,r;\mathcal X,\mathcal Y)\leq \delta$ by definition. This proves the claim.
\end{proof}

\begin{lemma} 
	\label{lem:GHP-ineqs}
	For $\delta:=\ghp(\mathcal X,\mathcal Y)\leq 1$, one has
	\[
	a_{\delta}(\mathcal X,\mathcal Y) \vee a_{\delta}(\mathcal Y,\mathcal X)\geq \frac{\delta}{2}.
	\]
	In addition, if $\ghp(\mathcal X,\mathcal Y)<\gamma\leq 1$, then
	\[
	a_{\gamma}(\mathcal X,\mathcal Y) \vee a_{\gamma}(\mathcal Y,\mathcal X)< \frac{\gamma}{2}.
	\]

\end{lemma}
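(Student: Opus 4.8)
The plan is to unwind the definition~\eqref{eq:GHP} of $\ghp$ directly, exploiting the monotonicity of $a_\epsilon$ in $\epsilon$ provided by Lemma~\ref{lem:a}. Write $F(\epsilon):=a_\epsilon(\mathcal X,\mathcal Y)\vee a_\epsilon(\mathcal Y,\mathcal X)$ for $\epsilon\in(0,1]$, so that $\ghp(\mathcal X,\mathcal Y)=\inf\{\epsilon\in(0,1]:F(\epsilon)<\epsilon/2\}$, with the convention that this infimum is $1$ when the set is empty. The first task is to record a monotonicity fact about $F$: if $F(\epsilon_0)<\epsilon_0/2$ for some $\epsilon_0$, then $F(\epsilon)<\epsilon/2$ for all $\epsilon\in[\epsilon_0,1]$ as well. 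Indeed, by the first part of Lemma~\ref{lem:a} (applied with $r=1/\epsilon_0$ versus $r=1/\epsilon$, together with the monotonicity in $\epsilon$), one has to be a little careful because both the perturbation parameter $\epsilon$ and the radius $1/\epsilon$ move simultaneously; so I would actually invoke Lemma~\ref{lem:a} in the form: $a(\epsilon,r;\cdot,\cdot)$ is non-increasing in $\epsilon$, and non-decreasing in $r$ on $[\epsilon,r_0]$ once $a(\epsilon,r_0;\cdot,\cdot)\le\epsilon/2$. Combining these, if $\epsilon_0\le\epsilon\le 1$ then $a_\epsilon(\mathcal X,\mathcal Y)=a(\epsilon,1/\epsilon;\mathcal X,\mathcal Y)\le a(\epsilon_0,1/\epsilon;\mathcal X,\mathcal Y)\le a(\epsilon_0,1/\epsilon_0;\mathcal X,\mathcal Y)=a_{\epsilon_0}(\mathcal X,\mathcal Y)<\epsilon_0/2\le\epsilon/2$, where the middle inequality uses $1/\epsilon\le 1/\epsilon_0$ and the hypothesis $a(\epsilon_0,1/\epsilon_0;\mathcal X,\mathcal Y)\le\epsilon_0/2$ to license the radius-monotonicity. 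The same for the reversed pair, so $F(\epsilon)<\epsilon/2$. Hence the set $S:=\{\epsilon\in(0,1]:F(\epsilon)<\epsilon/2\}$ is an up-set in $(0,1]$, i.e. an interval of the form $(\delta,1]$ or $[\delta,1]$ or empty, where $\delta=\inf S=\ghp(\mathcal X,\mathcal Y)$.

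With that structural fact in hand, the two assertions of the lemma fall out. For the second, suppose $\ghp(\mathcal X,\mathcal Y)<\gamma\le 1$. Then $\gamma$ is strictly above the infimum $\delta$ of $S$, so $\gamma\in S$ provided $S\ne\emptyset$; but if $S=\emptyset$ then $\delta=1$ by convention, contradicting $\delta<\gamma\le 1$. Hence $\gamma\in S$, which is exactly $a_\gamma(\mathcal X,\mathcal Y)\vee a_\gamma(\mathcal Y,\mathcal X)<\gamma/2$, as claimed. For the first assertion, let $\delta:=\ghp(\mathcal X,\mathcal Y)$ and suppose towards a contradiction that $F(\delta)<\delta/2$, i.e. $\delta\in S$. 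If $\delta=1$ this already says $1\in S$, and then by the monotonicity fact $S\supseteq(0,1]$, so $\inf S=0\ne 1$, a contradiction with $\delta=1$ unless... actually here I must handle $\delta=1$ separately: if $\delta=1$ and $F(1)<1/2$, the up-set property forces $S$ to contain a neighbourhood below $1$? No --- the up-set property only propagates upward. So I instead argue: since $F$ is right-continuous-ish is not available; rather, I use that $a_\epsilon$ is built from $\cghp$ on balls of radius $1/\epsilon$ which vary, and a direct continuity claim is awkward. The clean route is: I do not need $\delta<1$ to be excluded --- if $\delta=1$ the inequality $a_1(\mathcal X,\mathcal Y)\vee a_1(\mathcal Y,\mathcal X)\ge 1/2$ is precisely the statement that $1\notin S$, which is automatic from $\inf S=1$ together with the up-set property (an up-set with infimum $1$ inside $(0,1]$ must be $\{1\}$ or $\emptyset$; I must rule out $\{1\}$). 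To rule out $S=\{1\}$, I would show $S$ is actually an \emph{open} up-set, equivalently that $\epsilon\mapsto F(\epsilon)-\epsilon/2$ cannot be negative only at the single point $\epsilon=1$; this follows because if $F(1)<1/2$ then by choosing $\epsilon$ slightly less than $1$ the radius $1/\epsilon$ only slightly exceeds $1$, and by Lemma~\ref{lem:a}'s radius-monotonicity (valid since $F(1)\le 1/2$) one gets $F(\epsilon)\le F(1)<1/2<\epsilon/2$ for $\epsilon$ close enough to $1$ — wait, that needs $a(\epsilon,1/\epsilon)\le a(1,1/\epsilon)$ which needs monotonicity in the first argument on the range $[\epsilon,1]$ of radii up to $1/\epsilon$, all fine. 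Hence $S$ would contain an interval $(\epsilon_0,1]$, so $\inf S\le\epsilon_0<1$, contradicting $\delta=1$.

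In the case $\delta<1$: if $\delta\in S$, then since $S$ is an up-set containing $\delta$ and, by the argument just given, is open from below at each of its points, $S$ contains a half-open interval $(\delta',1]$ with $\delta'<\delta$, contradicting $\delta=\inf S$. Either way $\delta\notin S$, i.e. $a_\delta(\mathcal X,\mathcal Y)\vee a_\delta(\mathcal Y,\mathcal X)\ge\delta/2$, which is the first assertion.

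The main obstacle I anticipate is the bookkeeping around the simultaneous movement of the perturbation parameter $\epsilon$ and the radius $1/\epsilon$: Lemma~\ref{lem:a} gives monotonicity in each variable separately but under a side condition ($a(\epsilon,r_0;\cdot,\cdot)\le\epsilon/2$) that must be verified before the radius-monotonicity can be invoked, so the argument has to be threaded so that this side condition is always available exactly when needed (it is, because we are precisely working at values of $\epsilon$ where $F(\epsilon)\le\epsilon/2$). The rest — that an up-set in $(0,1]$ which is open-from-below has its infimum outside itself, and that $\inf\emptyset=1$ handles the empty case — is elementary order-theoretic reasoning.
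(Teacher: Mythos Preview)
Your argument for the second assertion is correct and is essentially the paper's argument, just repackaged in the up-set language: from $\epsilon_0\in S$ and $\epsilon_0\le\epsilon\le 1$ you correctly chain
\[
a(\epsilon,1/\epsilon)\ \le\ a(\epsilon_0,1/\epsilon)\ \le\ a(\epsilon_0,1/\epsilon_0)\ <\ \epsilon_0/2\ \le\ \epsilon/2,
\]
using first the $\epsilon$-monotonicity and then the $r$-monotonicity of Lemma~\ref{lem:a} (the side condition for the latter is available because $\epsilon_0\in S$).

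The first assertion, however, has a genuine gap. Your ``open from below'' step claims, for $\epsilon$ slightly smaller than $\delta$ (you wrote the case $\delta=1$), that $a(\epsilon,1/\epsilon)\le a(\delta,1/\epsilon)$ by monotonicity in the first argument. But Lemma~\ref{lem:a} says $a(\epsilon,r)$ is \emph{non-increasing} in $\epsilon$, so for $\epsilon<\delta$ one gets $a(\epsilon,1/\epsilon)\ge a(\delta,1/\epsilon)$, the reverse inequality. More structurally: passing from $\delta$ down to $\epsilon<\delta$ replaces the ball of radius $1/\delta$ by the \emph{larger} ball of radius $1/\epsilon$ and simultaneously tightens the allowed perturbation from $\delta$ to $\epsilon$; both changes push $a$ the wrong way, so no combination of the two monotonicity statements in Lemma~\ref{lem:a} can yield $a_\epsilon<\epsilon/2$ from $a_\delta<\delta/2$.

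The paper fills this gap with an ingredient you did not use: the c\`adl\`ag property of $r\mapsto\pcball{\mathcal X}{r}$ from Lemma~\ref{lem:cadlag}. Assuming $a_\delta(\mathcal X,\mathcal Y)<\delta/2-\alpha$, one takes a witness $\mathcal Y'$ with $\pcball{\mathcal Y}{1/\delta-\delta}\preceq\mathcal Y'$ and $\cghp(\pcball{\mathcal X}{1/\delta},\mathcal Y')<\delta/2-\alpha$, then uses right-continuity at $r=1/\delta$ to choose $\epsilon<\delta$ with $\hp(\pcball{\mathcal X}{1/\epsilon},\pcball{\mathcal X}{1/\delta})<\alpha/4$; a small enlargement of $\mathcal Y'$ (adjoining $\cball{1/\epsilon-\epsilon}{o_Y}$) produces $\mathcal Y''$ with $\pcball{\mathcal Y}{1/\epsilon-\epsilon}\preceq\mathcal Y''$ and $\hp(\mathcal Y',\mathcal Y'')<\alpha/4$. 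The triangle inequality then gives $a_\epsilon(\mathcal X,\mathcal Y)<\epsilon/2$ once $\epsilon>\delta-\alpha$, contradicting $\delta=\inf S$. So the missing idea is precisely this continuity input; monotonicity alone is not enough.
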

\begin{proof}%
	For the first claim, assume that for $\delta:=\ghp(\mathcal X, \mathcal Y)$, one has $a_{\delta}(\mathcal X,\mathcal Y) \vee a_{\delta}(\mathcal Y,\mathcal X)<\frac{\delta}{2}-\alpha$, where $\alpha>0$. So there exists a compact \pmm-subspace $\pcball{\mathcal Y}{1/\delta-\delta}\preceq\mathcal Y'\preceq\mathcal Y$ such that $\cghp(\pcball{\mathcal X}{1/\delta},\mathcal Y')<\frac{\delta}{2}-\alpha$. 
	Let $\mathcal Y=:(Y,o_Y,\mu_Y)$ and $\mathcal Y'=:(Y',o_Y,\mu'_Y)$.
	Lemma~\ref{lem:cadlag} implies that there exists $\epsilon<\delta$ such that $\hp(\pcball{\mathcal X}{1/\epsilon},\pcball{\mathcal X}{1/\delta})<\frac{\alpha}{4}$. Let $Y_0:=\cball{1/\epsilon-\epsilon}{o_Y}$. By a similar argument to Lemma~\ref{lem:cadlag}, $\epsilon<\delta$ can be chosen such that $\hp(\mathcal Y',\mathcal Y'')<\frac{\alpha}{4}$, where $\mathcal Y'':=(Y'\cup Y_0,o_Y,\mu'_Y\vee \restrict{\mu_Y}{Y_0})$. %
	Note that $\pcball{\mathcal Y}{1/\epsilon-\epsilon}\preceq \mathcal Y''\preceq \mathcal Y$.
	The triangle inequality implies that $\hp(\pcball{\mathcal X}{1/\epsilon},\mathcal Y'')<\frac{\delta}{2}-\frac{\alpha}{2}$. 
	Now, if $\epsilon$ is chosen such that $\epsilon>\delta-\alpha$, the definition~\eqref{eq:a_r} gives that $a_{\epsilon}(\mathcal X, \mathcal Y)<\frac{\epsilon}{2}$. Similarly, $\epsilon<\delta$ can be chosen such that $a_{\epsilon}(\mathcal Y, \mathcal X)<\frac{\epsilon}{2}$. This gives that $\ghp(\mathcal X,\mathcal Y)\leq \epsilon<\delta$, which is a contradiction.
	
	For the second claim, since $\ghp(\mathcal X, \mathcal Y)<\gamma\leq 1$, \eqref{eq:GHP} implies that there exists $\epsilon<\gamma$ such that $a_{\epsilon}(\mathcal X,\mathcal Y) \vee a_{\epsilon}(\mathcal Y,\mathcal X)< \frac{\epsilon}{2}$. The second claim in Lemma~\ref{lem:a} implies that $a(\epsilon,1/\gamma;\mathcal X,\mathcal Y) \vee a(\epsilon,1/\gamma;\mathcal Y,\mathcal X)< \frac{\epsilon}{2}$. Therefore, the first claim in Lemma~\ref{lem:a} implies that $a_{\gamma}(\mathcal X,\mathcal Y) \vee a_{\gamma}(\mathcal Y,\mathcal X)< \frac{\epsilon}{2}<\frac{\gamma}{2}$. %

\end{proof}

\begin{proof}[Proof of Theorem~\ref{thm:GHP-Metric}]
	It is easy to see that $\ghp(\mathcal X,\mathcal Y)$ depends only on the isometry classes of $\mathcal X$ and $\mathcal Y$. Therefore, it induces a function on $\mmstar\times\mmstar$, which is denoted by the same symbol $\ghp$. It is immediate that $\ghp$ is symmetric and $\ghp(\mathcal X,\mathcal X)=0$.
	
	Let $\mathcal X,\mathcal Y$ and $\mathcal Z$ be boundedly compact \pmm{} spaces. Assume $\ghp(\mathcal X,\mathcal Y)<\epsilon$ and $\ghp(\mathcal Y,\mathcal Z)<\delta$. For the triangle inequality, it is enough to show that $\ghp(\mathcal X,\mathcal Z)\leq \epsilon+\delta$. If $\epsilon+\delta\geq 1$, the claim is clear by~\eqref{eq:GHP<1}. So assume $\epsilon+\delta<1$. By Lemma~\ref{lem:GHP-ineqs}, one gets that $a_{\epsilon}(\mathcal X,\mathcal Y)\vee a_{\epsilon}(\mathcal Y,\mathcal X)< \frac{\epsilon}{2}$ and $a_{\delta}(\mathcal Y,\mathcal Z)\vee a_{\delta}(\mathcal Z,\mathcal Y)< \frac{\delta}{2}$. Lemma~\ref{lem:a} implies that $a(\epsilon,1/(\epsilon+\delta);\mathcal X,\mathcal Y)<\frac{\epsilon}{2}$. 
	Therefore, by~\eqref{eq:a_r}, there is a compact \pmm{}-subspace $\mathcal Y'=(Y',o_Y,\mu'_Y)$ of $\mathcal Y$ such that $\pcball{\mathcal Y}{1/(\epsilon+\delta)-\epsilon}\preceq \mathcal Y'$ and 
	\[
	\cghp(\pcball{\mathcal X}{1/(\epsilon+\delta)}, \mathcal Y')<\frac{\epsilon}{2}.
	\]
	So, Lemma~\ref{lem:radius} implies that $\mathcal Y'\preceq\pcball{\mathcal Y}{1/{(\epsilon+\delta)}+\epsilon}$.
	It is straightforward to deduce from $0<\epsilon+\delta<1$ that $\epsilon+\frac 1{\epsilon+\delta}<\frac 1{\delta}$. Therefore, $\mathcal Y'\preceq\pcball{\mathcal Y}{1/\delta}$. 
	
	On the other hand, by~\eqref{eq:a_r}, there exists a compact 
	\pmm{}-subspace $\mathcal Z'=(Z',o_Z,\mu'_Z)$ of $\mathcal Z$ such that 
	$\pcball{\mathcal Z}{1/\delta-\delta}\preceq \mathcal Z'$
	and $\cghp(\pcball{\mathcal Y}{1/\delta}, \mathcal Z')<\frac{\delta}{2}$.  By Lemma~\ref{lem:GHP-monotone}, there is a further compact	\pmm{}-subspace $\mathcal Z''$ of $\mathcal Z'$ such that
	$\pcball{\mathcal Z'}{1/(\epsilon+\delta)-\epsilon-\delta}\preceq \mathcal Z''$
	and 
	\[
	\cghp(\mathcal Y', \mathcal Z'')<\frac{\delta}{2}.
	\]
	The triangle inequality for $\cghp$ gives
	\[
	\cghp(\pcball{\mathcal X}{1/(\epsilon+\delta)},\mathcal Z'')\leq \frac{\epsilon}{2} + \frac{\delta}{2}.
	\]
	Since $\pcball{\mathcal Z}{1/(\epsilon+\delta)-\epsilon-\delta}\preceq\mathcal Z''$, \eqref{eq:a_r} implies that $a_{\epsilon+\delta}(\mathcal X,\mathcal Z)<(\epsilon+\delta)/2$. Similarly, one obtains $a_{\epsilon+\delta}(\mathcal Z,\mathcal X)<(\epsilon+\delta)/2$. Therefore, $\ghp(\mathcal X,\mathcal Z)\leq\epsilon+\delta$ and the triangle inequality is proved.
	
	The last step is to prove that $\ghp(\mathcal X,\mathcal Y)=0$ implies that $\mathcal X$ and $\mathcal Y$ are GHP-isometric. Fix $r\geq 0$ and let $0<\epsilon<1$ be arbitrary. Lemma~\ref{lem:GHP-ineqs} implies that $a_{\epsilon}(\mathcal X,\mathcal Y)<\frac{\epsilon}{2}$. Therefore, assuming $r<\frac 1{\epsilon}$, \eqref{eq:a_r} and Lemma~\ref{lem:GHP-monotone} imply 
	that there exists a \pmm-subspace $\mathcal Y_{\epsilon}$ %
	of $\mathcal Y$ such that 
	\[
	\cghp(\pcball{\mathcal X}{r}, \mathcal Y_{\epsilon})<\frac{\epsilon}{2}.
	\]
	By Lemmas~\ref{lem:radius} and~\ref{lem:GHP-subsets}, There is a convergent subsequence of the subspaces under the metric $\cghp$, say $\mathcal Y_{\epsilon_n}\rightarrow \mathcal Y'\preceq \mathcal Y$, where %
	$\epsilon_n\rightarrow 0$. It follows that $\cghp(\pcball{\mathcal X}{r},\mathcal Y')=0$. Since $\cghp$ is a metric on $\mmcstar$,  $\pcball{\mathcal X}{r}$ is GHP-isometric to $\mathcal  Y'$. In particular, Lemma~\ref{lem:radius} implies that $\mathcal Y'\preceq \pcball{\mathcal Y}{r}$. %
	On the other hand, one can similarly find a \pmm-subspace $\mathcal X'$ of $X$ which is GHP-isometric to $\pcball{\mathcal Y}{r}$ and $\mathcal X'\preceq\pcball{\mathcal X}{r}$. These facts imply that $\pcball{\mathcal X}{r}$ and $\pcball{\mathcal Y}{r}$ are themselves GHP-isometric as follows: If $f:\pcball{\mathcal X}{r}\rightarrow \mathcal Y'$ and $g:\pcball{\mathcal Y}{r}\rightarrow \mathcal X'$ are GHP-isometries, then, $g\circ f:\pcball{\mathcal X}{r}\rightarrow \mathcal X'$ is also a GHP-isometry. Compactness of $\pcball{\mathcal X}{r}$, finiteness of the measure on $\pcball{\mathcal X}{r}$ and $\mathcal X'\preceq\pcball{\mathcal X}{r}$ imply that $g\circ f$ is surjective and $\mathcal X'=\pcball{\mathcal X}{r}$.
	
	To prove that $\mathcal X$ is GHP-isometric to $\mathcal Y$, let $C_n$ be the set of GHP-isometries from $\pcball{\mathcal X}{n}$ to $\pcball{\mathcal Y}{n}$ for $n=1,2,\ldots$, which is shown to be non-empty.
	\unwritten{This compactness needs pointwise-continuity.} The topology of uniform convergence makes $C_n$ a compact set. %
	The restriction map $f\mapsto \restrict{f}{\pcball{\mathcal X}{n-1}}$ induces a continuous function from $C_n$ to $C_{n-1}$. %
	Therefore, the generalization of K\"{o}nig's infinity lemma  (Lemma~\ref{lem:infinity}) implies that there is a sequence of GHP-isometries $\rho_n\in C_n$ such that $\rho_{n-1}$ is the restriction of $\rho_n$ to $\pcball{\mathcal X}{n-1}$ for each $n$. Thus, these isometries can be glued together to form a GHP-isometry between $\mathcal X$ and $\mathcal Y$, which proves the claim.
\end{proof}

\begin{remark}
	\label{rem:variant}
	By Lemma~\ref{lem:cadlag}, it is easy to see that
	\begin{equation}
	\label{eq:lengthSpaceGHP}
	\int_0^{\infty} e^{-r}\left(1\wedge \cghp(\pcball{\mathcal X}{r}, \pcball{\mathcal Y}{r}) \right)dr
	\end{equation}
	is well defined for all $\mathcal X,\mathcal Y\in\mmstar$ and defines a semi-metric on $\mmstar$ (such formulas are common in various settings in the literature\del{ some of which are discussed in Sections~\ref{sec:extension} and~\ref{sec:special}}).
	With similar arguments to those in the present section, it can be shown that this is indeed a metric and makes $\mmstar$ a complete separable metric space as well. However, we preferred to use the formulation of Definition~\ref{def:GHP} to avoid the issues regarding non-monotonicity of  $\cghp(\pcball{\mathcal X}{r}, \pcball{\mathcal Y}{r})$ as a function of $r$. In addition, Lemma~\ref{lem:GHP-monotone} enables us to have more quantitative bounds in the arguments. Nevertheless, Theorem~\ref{thm:convergence} below implies that the two metrics generate the same topology.
\end{remark}

\begin{remark}
	\label{rem:noncompactHausdorff}
	Let $Z$ be a metric space, $\mathcal F$ be the set of boundedly-compact subsets of $Z$ and $\mathcal M$ be the set of boundedly-finite Borel measures on $Z$ (up to no equivalence relation).
	By formulas similar to either~\eqref{eq:GHP} or~\eqref{eq:lengthSpaceGHP}, one can extend the Hausdorff metric and the Prokhorov metric to $\mathcal F$ and $\mathcal M$ respectively. This can be done by fixing a point $o\in Z$, letting $\pcball{X}{r}:=X\cap \cball{r}{o}$ for $X\subseteq Z$ and letting $\pcball{\mu}{r}:=\restrict{\mu}{\cball{r}{o}}$ for measures $\mu$ on $Z$ (let $\hausdorff(\emptyset,X):=\infty$ whenever $X\neq\emptyset$).
	By similar arguments, one can show that formulas similar to~\eqref{eq:GHP} or~\eqref{eq:lengthSpaceGHP} give metrics on $\mathcal F$ and $\mathcal M$ respectively. Moreover, if $Z$ is complete and separable, %
	then $\mathcal F$ and $\mathcal M$ are also complete and separable %
	(this can be proved similarly to the results of Subsection~\ref{subsec:polishness} %
	below). In this case, the metrics on $\mathcal F$ and $\mathcal M$ are metrizations of the \textit{Fell topology} and the \textit{vague topology} respectively. The details are skipped for brevity. %
	See Subsection~\ref{subsec:randomMeasure} and~\cite{Kh19generalization} for further discussion.
\end{remark}

\subsection{The Topology of the GHP Metric}
\label{subsec:convergence}

Gromov~\cite{Gr81} has defined a topology on the set of boundedly-compact pointed metric spaces, which is called the \textit{Gromov-Hausdorff topology} in the literature (see also~\cite{bookBBI}). In addition, the \textit{Gromov-Hausdorff-Prokhorov topology} (see~\cite{bookVi10}) is defined  on the set $\mmstar$ of boundedly-compact \pmm{} spaces (it is called the \textit{pointed measured Gromov-Hausdorff topology} in~\cite{bookVi10}). In this subsection, it is shown that the metric $\ghp$ of the present paper is a metrization of the Gromov-Hausdorff-Prokhorov topology. The main result is Theorem~\ref{thm:convergence} which provides criteria for convergence under the metric $\ghp$. The Gromov-Hausdorff topology will be studied in Subsection~\ref{subsec:GH}.

\begin{lemma}
	\label{lem:ghpVScghp}
	Let $\mathcal X, \mathcal Y\in\mmstar$ be \pmm{} spaces.
	\begin{enumerate}[(i)]
		\item \label{lem:ghpVScghp:1} For all $r\geq 0$,
		\[
		\ghp(\mathcal X,\mathcal Y)\leq \frac 1 r \vee 2\cghp(\pcball{\mathcal X}{r},\pcball{\mathcal Y}{r}).
		\]
		\item \label{lem:ghpVScghp:2} If $\mathcal X,\mathcal Y\in\mmcstar$ are compact, then
		\[
		\ghp(\mathcal X, \mathcal Y) \leq 2\cghp(\mathcal X, \mathcal Y).
		\]
		\item \label{lem:ghpVScghp:3} The topology on $\mmcstar$ induced by the metric $\ghp$ is coarser than that of $\cghp$.
	\end{enumerate}
\end{lemma}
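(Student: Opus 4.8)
The plan is to prove the three parts in order, with part~\eqref{lem:ghpVScghp:1} doing most of the work and parts~\eqref{lem:ghpVScghp:2} and~\eqref{lem:ghpVScghp:3} following quickly.

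For part~\eqref{lem:ghpVScghp:1}, fix $r\geq 0$ and set $\delta:=\tfrac1r\vee 2\cghp(\pcball{\mathcal X}{r},\pcball{\mathcal Y}{r})$; we may assume $\delta<1$, since otherwise the bound $\ghp\leq 1$ from~\eqref{eq:GHP<1} suffices. Take any $\epsilon$ with $\delta<\epsilon\leq 1$. Then $1/\epsilon<r$, so $\pcball{\mathcal X}{1/\epsilon}$ is a compact \pmm-subspace of $\pcball{\mathcal X}{r}$. Applying Lemma~\ref{lem:GHP-monotone}\eqref{part:lem:GHP-monotone:2} to the pair $\pcball{\mathcal X}{r}$, $\pcball{\mathcal Y}{r}$ with the subspace $\pcball{\mathcal X}{1/\epsilon}$ (note $1/\epsilon\geq 2\cghp(\pcball{\mathcal X}{r},\pcball{\mathcal Y}{r})$ since $\epsilon>\delta\geq 2\cghp(\dots)$), we obtain a compact \pmm-subspace $\mathcal Y'$ of $\pcball{\mathcal Y}{r}$ with $\pcball{\mathcal Y}{1/\epsilon-2\cghp(\pcball{\mathcal X}{r},\pcball{\mathcal Y}{r})}\preceq\mathcal Y'\preceq\pcball{\mathcal Y}{r}$ and $\cghp(\pcball{\mathcal X}{1/\epsilon},\mathcal Y')\leq\cghp(\pcball{\mathcal X}{r},\pcball{\mathcal Y}{r})<\epsilon/2$. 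Since $2\cghp(\pcball{\mathcal X}{r},\pcball{\mathcal Y}{r})\leq\delta<\epsilon$, we have $1/\epsilon-2\cghp(\pcball{\mathcal X}{r},\pcball{\mathcal Y}{r})>1/\epsilon-\epsilon$, so $\pcball{\mathcal Y}{1/\epsilon-\epsilon}\preceq\mathcal Y'\preceq\mathcal Y$, which by~\eqref{eq:a_r} gives $a_\epsilon(\mathcal X,\mathcal Y)<\epsilon/2$. By symmetry $a_\epsilon(\mathcal Y,\mathcal X)<\epsilon/2$ as well, and hence $\ghp(\mathcal X,\mathcal Y)\leq\epsilon$ by Definition~\ref{def:GHP}. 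Letting $\epsilon\downarrow\delta$ yields $\ghp(\mathcal X,\mathcal Y)\leq\delta$, which is the claim.

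For part~\eqref{lem:ghpVScghp:2}, suppose $\mathcal X,\mathcal Y$ are compact and let $R:=\max\{d(o_X,x):x\in X\}$ and $R':=\max\{d(o_Y,y):y\in Y\}$. For all $r\geq R\vee R'$ one has $\pcball{\mathcal X}{r}=\mathcal X$ and $\pcball{\mathcal Y}{r}=\mathcal Y$, so part~\eqref{lem:ghpVScghp:1} gives $\ghp(\mathcal X,\mathcal Y)\leq\tfrac1r\vee 2\cghp(\mathcal X,\mathcal Y)$; letting $r\to\infty$ gives $\ghp(\mathcal X,\mathcal Y)\leq 2\cghp(\mathcal X,\mathcal Y)$. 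Finally, part~\eqref{lem:ghpVScghp:3} is immediate from part~\eqref{lem:ghpVScghp:2}: a map between metric spaces that is bounded by twice another metric (up to the constant factor) is continuous, so the identity map from $(\mmcstar,\cghp)$ to $(\mmcstar,\ghp)$ is continuous, i.e.\ the $\ghp$-topology on $\mmcstar$ is coarser than the $\cghp$-topology.

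The main obstacle is getting the bookkeeping in part~\eqref{lem:ghpVScghp:1} exactly right: one must check that the radius $1/\epsilon-2\cghp(\pcball{\mathcal X}{r},\pcball{\mathcal Y}{r})$ produced by Lemma~\ref{lem:GHP-monotone}\eqref{part:lem:GHP-monotone:2} really dominates $1/\epsilon-\epsilon$ so that the subspace $\mathcal Y'$ is admissible in the infimum~\eqref{eq:a_r} defining $a_\epsilon$, and that the hypothesis $r\geq 2\epsilon$ of that lemma (here with ``$\epsilon$'' replaced by $\cghp(\pcball{\mathcal X}{r},\pcball{\mathcal Y}{r})$) is met — both of which follow from the definition of $\delta$ and the assumption $\epsilon>\delta$, but need to be verified carefully. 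There is also the mild degenerate case $r=0$, where $1/r=\infty$ and the inequality is trivial, which should be dispatched at the outset.
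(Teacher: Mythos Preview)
Your proof is correct. For part~\eqref{lem:ghpVScghp:1}, however, the paper takes a shorter route: instead of invoking Lemma~\ref{lem:GHP-monotone}\eqref{part:lem:GHP-monotone:2} to construct a suitable $\mathcal Y'$ at radius $1/\epsilon$, one simply takes $\mathcal Y':=\pcball{\mathcal Y}{r}$ in~\eqref{eq:a_r} to get $a(\epsilon,r;\mathcal X,\mathcal Y)\leq\cghp(\pcball{\mathcal X}{r},\pcball{\mathcal Y}{r})<\epsilon/2$, and then applies the monotonicity of Lemma~\ref{lem:a} (which was itself proved via Lemma~\ref{lem:GHP-monotone}) to pass from radius $r$ down to $1/\epsilon$, yielding $a_\epsilon(\mathcal X,\mathcal Y)<\epsilon/2$. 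Your argument is essentially re-doing the content of Lemma~\ref{lem:a} inline rather than citing it, so the underlying mechanism is the same; the paper's version just avoids the careful bookkeeping you flag in your final paragraph. Parts~\eqref{lem:ghpVScghp:2} and~\eqref{lem:ghpVScghp:3} match the paper, with your limit $r\to\infty$ in part~\eqref{lem:ghpVScghp:2} arguably cleaner in the degenerate case $\cghp(\mathcal X,\mathcal Y)=0$.
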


\begin{proof}
	\eqref{lem:ghpVScghp:1}.
	Since $\ghp(\mathcal X,\mathcal Y)\leq 1$, we can assume $r\geq 1$ without loss of generality.
	Let $\epsilon>1/ r \vee 2\cghp(\pcball{\mathcal X}{r},\pcball{\mathcal Y}{r})$. It is enough to prove that $\epsilon\geq\ghp(\mathcal X,\mathcal Y)$. This is trivial if $\epsilon\geq 1$. So assume $\epsilon<1$. By letting $\mathcal Y':=\pcball{\mathcal Y}{r}$ in~\eqref{eq:a_r}, one gets that $a(\epsilon,r;\mathcal X,\mathcal Y)\leq \cghp(\pcball{\mathcal X}{r},\pcball{\mathcal Y}{r})<\frac{\epsilon}2$. So, the fact $\frac 1{\epsilon}<r$ and Lemma~\ref{lem:a} imply that $a_{\epsilon}(\mathcal X,\mathcal Y)<\frac{\epsilon}{2}$. Similarly, one gets $a_{\epsilon}(\mathcal Y,\mathcal X)<\frac{\epsilon}{2}$. So~\eqref{eq:GHP} gives that $\ghp(\mathcal X,\mathcal Y)\leq \epsilon$ and the claim is proved.
	
	\eqref{lem:ghpVScghp:2}. The claim is implied by part~\eqref{lem:ghpVScghp:1} by letting $r$ large enough such that $\pcball{\mathcal X}{r}=\mathcal X$, $\pcball{\mathcal Y}{r}=\mathcal Y$ and $\frac 1 r\leq 2\cghp(\mathcal X,\mathcal Y)$.

	\eqref{lem:ghpVScghp:3}.
	By the previous part, any convergent sequence under $\cghp$ is also convergent under $\ghp$. This implies the second claim.
\end{proof}

\begin{remark} 
	In fact, the topology of the metric $\ghp$ on $\mmcstar$ is strictly coarser than that of $\cghp$
	since having $\ghp(\mathcal X_n,\mathcal X)\to 0$ does not imply $\cghp(\mathcal X_n,\mathcal X)\to 0$; e.g., when $\mathcal X_n:=\{0,n\}$ and $\mathcal X:=\{0\}$ endowed with the Euclidean metric and the counting measure (in general, adding the assumption $\sup \diam(\mathcal X_n)<\infty$ is sufficient for convergence under $\cghp$).
	This is similar to the fact that the vague topology on the set of measures on a given non-compact metric space is strictly coarser than the weak topology (see e.g., \cite{bookKa17randommeasures}). A similar property holds for the set of compact subsets of a given non-compact metric space. %
\end{remark}

\begin{theorem}[Convergence]
	\label{thm:convergence}
	Let $\mathcal X$ and $(\mathcal X_n)_{n\geq 0}$ be boundedly compact \pmm{} spaces. Then the following are equivalent:
	\begin{enumerate}[(i)]
		\item \label{thm:convergence:1} $\mathcal X_n\to \mathcal X$ in the metric $\ghp$.
		\item \label{thm:convergence:2} For every $r>0$ and $\epsilon>0$, for large enough $n$, there exists a compact \pmm-subspace $\mathcal X'_n$ of $\mathcal X$ such that $\pcball{\mathcal X}{r-\epsilon} \preceq\mathcal X'_n\preceq X$  and $\cghp(\pcball{\mathcal X}{r}_n, \mathcal X'_n)<\epsilon$.
		
		\item \label{thm:convergence:3} For every $r>0$ and $\epsilon>0$, for large enough $n$, there exist
		compact \pmm-subspaces of $\mathcal X$ and $\mathcal X_n$ with $\cghp$-distance less than $\epsilon$ such that they contain (as \pmm-subspaces) the balls of radii $r$ centered at the roots of $\mathcal X$ and $\mathcal X_n$ respectively.

		\item \label{thm:convergence:4} 
		For every continuity radius $r$ of $\mathcal X$ (Definition~\ref{def:continuityRadius}),
		one has $\pcball{\mathcal X}{r}_n\to\pcball{\mathcal X}{r}$ in the metric $\cghp$ as $n\to\infty$. 
		
		\item \label{thm:convergence:5}
		There exists an unbounded set $I\subseteq \mathbb R^{\geq 0}$ such that for each $r\in I$,  
		one has $\pcball{\mathcal X}{r}_n\to\pcball{\mathcal X}{r}$ in the metric $\cghp$ as $n\to\infty$. 
		\item \label{thm:convergence:6}
		$\lim_{n\to\infty} \int_0^{\infty} e^{-r}\left(1\wedge \cghp(\pcball{\mathcal X}{r}_n, \pcball{\mathcal X}{r}) \right)dr = 0$.
	\end{enumerate}
\end{theorem}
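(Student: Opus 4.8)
The plan is to establish \eqref{thm:convergence:1}~$\Rightarrow$~\eqref{thm:convergence:2}~$\Rightarrow$~\eqref{thm:convergence:3}~$\Rightarrow$~\eqref{thm:convergence:4}~$\Rightarrow$~\eqref{thm:convergence:5}~$\Rightarrow$~\eqref{thm:convergence:1}, together with \eqref{thm:convergence:4}~$\Rightarrow$~\eqref{thm:convergence:6}~$\Rightarrow$~\eqref{thm:convergence:1}; since \eqref{thm:convergence:4} lies on the cycle, this gives the equivalence of all six statements. Several of these implications are short. For \eqref{thm:convergence:5}~$\Rightarrow$~\eqref{thm:convergence:1}: given $\eta>0$, pick $r$ in the unbounded set $I$ with $1/r<\eta$; for large $n$ one has $2\cghp(\pcball{\mathcal X_n}{r},\pcball{\mathcal X}{r})<\eta$, so Lemma~\ref{lem:ghpVScghp}\eqref{lem:ghpVScghp:1} yields $\ghp(\mathcal X_n,\mathcal X)<\eta$. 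For \eqref{thm:convergence:4}~$\Rightarrow$~\eqref{thm:convergence:5}: by Lemma~\ref{lem:continuityRadius} the continuity radii of $\mathcal X$ form a co-countable, hence unbounded, subset of $\mathbb R^{\geq 0}$, which can serve as $I$. For \eqref{thm:convergence:4}~$\Rightarrow$~\eqref{thm:convergence:6}: the integrand is bounded by $e^{-r}$ and tends to $0$ at every continuity radius, i.e.\ for a.e.\ $r$, so dominated convergence applies. For \eqref{thm:convergence:6}~$\Rightarrow$~\eqref{thm:convergence:1}: if $\ghp(\mathcal X_n,\mathcal X)\not\to 0$, pass to a subsequence with $\ghp(\mathcal X_{n_k},\mathcal X)\ge c>0$; along it the integral still tends to $0$, so a further subsequence has $\cghp(\pcball{\mathcal X_{n_{k_j}}}{r},\pcball{\mathcal X}{r})\to 0$ for a.e.\ $r$, and choosing such an $r$ with $1/r<c$ and applying Lemma~\ref{lem:ghpVScghp}\eqref{lem:ghpVScghp:1} contradicts the choice of the subsequence.

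For \eqref{thm:convergence:1}~$\Rightarrow$~\eqref{thm:convergence:2}, fix $r,\epsilon>0$ (the case $r\le\epsilon$ being trivial, assume $0<\epsilon<r$) and choose $\gamma\in(0,\epsilon)$ with $1/\gamma-\gamma>r$. For large $n$ we have $\ghp(\mathcal X_n,\mathcal X)<\gamma$, hence $a_\gamma(\mathcal X_n,\mathcal X)<\gamma/2$ by Lemma~\ref{lem:GHP-ineqs}; unwinding the definition~\eqref{eq:a_r} of $a$ (with Lemma~\ref{lem:a-inf}) produces a compact \pmm{}-subspace $\mathcal W$ of $\mathcal X$ with $\pcball{\mathcal X}{1/\gamma-\gamma}\preceq\mathcal W$ and $\cghp(\pcball{\mathcal X_n}{1/\gamma},\mathcal W)<\gamma/2$. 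Applying Lemma~\ref{lem:GHP-monotone}\eqref{part:lem:GHP-monotone:2} to the pair $(\pcball{\mathcal X_n}{1/\gamma},\mathcal W)$ with the sub-ball $\pcball{\mathcal X_n}{r}$ then yields a compact \pmm{}-subspace $\mathcal X'_n$ of $\mathcal W$ (hence of $\mathcal X$) with $\cghp(\pcball{\mathcal X_n}{r},\mathcal X'_n)<\gamma/2<\epsilon$ and $\pcball{\mathcal W}{r-2\cghp(\pcball{\mathcal X_n}{1/\gamma},\mathcal W)}\preceq\mathcal X'_n$; since $\pcball{\mathcal X}{1/\gamma-\gamma}\preceq\mathcal W\preceq\mathcal X$, the balls of $\mathcal W$ of radius at most $1/\gamma-\gamma$ coincide with those of $\mathcal X$, and $r-2\cghp(\cdots)>r-\gamma>r-\epsilon$, so $\pcball{\mathcal X}{r-\epsilon}\preceq\mathcal X'_n$, which is \eqref{thm:convergence:2}. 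The implication \eqref{thm:convergence:2}~$\Rightarrow$~\eqref{thm:convergence:3} is immediate: apply \eqref{thm:convergence:2} at radius $r+\epsilon$ and use $\pcball{\mathcal X_n}{r}\preceq\pcball{\mathcal X_n}{r+\epsilon}$.

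The heart of the argument is \eqref{thm:convergence:3}~$\Rightarrow$~\eqref{thm:convergence:4}. Let $r$ be a continuity radius of $\mathcal X=(X,o_X,\mu_X)$ and $\epsilon>0$. By Lemma~\ref{lem:cadlag} the curves $t\mapsto\cball{t}{o_X}$ and $t\mapsto\restrict{\mu_X}{\cball{t}{o_X}}$ are continuous at $r$, so one may pick $\delta\in(0,r)$ with $\hausdorff(\cball{s}{o_X},\cball{r}{o_X})<\epsilon$ and $\prokhorov(\restrict{\mu_X}{\cball{s}{o_X}},\restrict{\mu_X}{\cball{r}{o_X}})<\epsilon$ for all $|s-r|\le\delta$; in particular $\mu_X(\cball{r+\delta}{o_X}\setminus\cball{r-\delta}{o_X})<2\epsilon$ and $\cball{r+\delta}{o_X}\subseteq\cnei{\epsilon}{\cball{r-\delta}{o_X}}$. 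Put $\eta:=\min(\delta/2,\epsilon)$ and apply \eqref{thm:convergence:3} at radius $r$ with tolerance $\eta$: for large $n$ there are compact \pmm{}-subspaces $\mathcal A_n\preceq\mathcal X$, $\mathcal B_n\preceq\mathcal X_n$ with $\pcball{\mathcal X}{r}\preceq\mathcal A_n$, $\pcball{\mathcal X_n}{r}\preceq\mathcal B_n$ and $\cghp(\mathcal A_n,\mathcal B_n)<\eta$. Apply Lemma~\ref{lem:GHP-monotone}\eqref{part:lem:GHP-monotone:2} to the pair $(\mathcal B_n,\mathcal A_n)$ with the \pmm{}-subspace $\pcball{\mathcal X_n}{r}$ of $\mathcal B_n$ (legitimate since $\pcball{\mathcal B_n}{r}=\pcball{\mathcal X_n}{r}$): this produces a compact \pmm{}-subspace $\mathcal A'_n$ of $\mathcal A_n$ with $\cghp(\pcball{\mathcal X_n}{r},\mathcal A'_n)\le\cghp(\mathcal A_n,\mathcal B_n)<\eta$ and $\pcball{\mathcal A_n}{r-2\cghp(\mathcal A_n,\mathcal B_n)}\preceq\mathcal A'_n$. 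Since $\mathcal A_n\supseteq\pcball{\mathcal X}{r}$ and $r-2\cghp(\mathcal A_n,\mathcal B_n)>r-\delta$, the last relation reads $\pcball{\mathcal X}{r-\delta}\preceq\mathcal A'_n$, while Lemma~\ref{lem:radius} gives $\mathcal A'_n\preceq\pcball{\mathcal X}{r+\delta}$. Hence $\mathcal A'_n$ and $\pcball{\mathcal X}{r}$ are two \pmm{}-subspaces of $\pcball{\mathcal X}{r+\delta}$, both trapped between $\pcball{\mathcal X}{r-\delta}$ and $\pcball{\mathcal X}{r+\delta}$; the choice of $\delta$ forces the Hausdorff distance of their underlying sets and the total-variation (hence Prokhorov) distance of their measures to be $O(\epsilon)$, so $\cghp(\mathcal A'_n,\pcball{\mathcal X}{r})\le\hp(\mathcal A'_n,\pcball{\mathcal X}{r})=O(\epsilon)$ by~\eqref{eq:hp-ineq}. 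The triangle inequality then gives $\cghp(\pcball{\mathcal X_n}{r},\pcball{\mathcal X}{r})=O(\epsilon)$, and letting $\epsilon\downarrow 0$ proves \eqref{thm:convergence:4}.

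The only genuine obstacle is \eqref{thm:convergence:3}~$\Rightarrow$~\eqref{thm:convergence:4}; everything else is bookkeeping with Lemmas~\ref{lem:a}, \ref{lem:GHP-ineqs}, \ref{lem:GHP-monotone}, \ref{lem:ghpVScghp} plus a dominated-convergence/subsequence argument. The subtlety there is that one has no a priori control over the boundary behaviour of $\mathcal X_n$ at radius $r$ (e.g.\ $\mathcal X_n$ may carry an isolated point, or a lump of mass, at distance exactly $r$ from its root), so $\cghp(\pcball{\mathcal X_n}{r},\pcball{\mathcal X}{r})$ cannot be bounded by comparing balls of $\mathcal X_n$ at nearby radii. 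The device above is to transport the ball $\pcball{\mathcal X_n}{r}$, through the approximate isometry between $\mathcal B_n$ and $\mathcal A_n$ coming from Lemma~\ref{lem:GHP-monotone}, onto a \pmm{}-subspace of the \emph{fixed} space $\mathcal X$, and to exploit that $r$ being a continuity radius forces every \pmm{}-subspace of $\mathcal X$ caught in a thin annulus around radius $r$ to be $\cghp$-close to $\pcball{\mathcal X}{r}$.
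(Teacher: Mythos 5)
Your proof is correct, and it uses the same toolbox as the paper (Lemmas~\ref{lem:GHP-ineqs}, \ref{lem:a}, \ref{lem:a-inf}, \ref{lem:GHP-monotone}, \ref{lem:radius}, \ref{lem:ghpVScghp}, the inequality~\eqref{eq:hp-ineq} and the continuity-radius machinery), but the implication graph differs from the paper's in two places, and both deviations hold up. First, the paper closes the cycle via \eqref{thm:convergence:3}$\Rightarrow$\eqref{thm:convergence:1} and proves \eqref{thm:convergence:2}$\Rightarrow$\eqref{thm:convergence:4} separately, whereas you prove \eqref{thm:convergence:3}$\Rightarrow$\eqref{thm:convergence:4} directly; your extra application of Lemma~\ref{lem:GHP-monotone}\eqref{part:lem:GHP-monotone:2} to the pair $(\mathcal B_n,\mathcal A_n)$, transporting $\pcball{\mathcal X}{r}_n$ onto a \pmm{}-subspace $\mathcal A'_n$ of the fixed space $\mathcal X$ trapped between $\pcball{\mathcal X}{r-\delta}$ and $\pcball{\mathcal X}{r+\delta}$, is exactly the move the paper makes in its \eqref{thm:convergence:3}$\Rightarrow$\eqref{thm:convergence:1} step, so you have in effect merged two of the paper's implications into one; the sandwich-and-$\hp$ estimate that finishes it is the same as the paper's. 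Second, for \eqref{thm:convergence:6} the paper proves the harder statement \eqref{thm:convergence:6}$\Rightarrow$\eqref{thm:convergence:4} by a bespoke argument (it explicitly warns that the converse of dominated convergence fails), while you only need \eqref{thm:convergence:6}$\Rightarrow$\eqref{thm:convergence:1} for your cycle and get it more cheaply from the standard fact that $L^1$-convergence yields an a.e.\ convergent subsequence, combined with Lemma~\ref{lem:ghpVScghp}\eqref{lem:ghpVScghp:1}; this is a legitimately simpler route, at the (irrelevant, given the equivalence) cost of not exhibiting ball-wise convergence directly from \eqref{thm:convergence:6}. Two small points worth adding: in \eqref{thm:convergence:4}$\Rightarrow$\eqref{thm:convergence:6} one should note that the integrand $r\mapsto \cghp(\pcball{\mathcal X}{r}_n,\pcball{\mathcal X}{r})$ is measurable (the paper gets this from the c\`adl\`ag property, Lemma~\ref{lem:cadlag2}), and when invoking Lemma~\ref{lem:GHP-ineqs} you implicitly use $\gamma\leq 1$, which is automatic here since $1/\gamma-\gamma>r>0$ forces $\gamma<1$.
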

\begin{proof}
	\eqref{thm:convergence:1}$\Rightarrow$\eqref{thm:convergence:2}.
	Assume $\mathcal X_n\to \mathcal X$. Let $r>0$ and $\epsilon>0$ be given. One may assume $\epsilon<\frac 1 r$ without loss of generality. For large enough $n$, one has $\ghp(\mathcal X_n, \mathcal X)<\epsilon$. If so, Lemma~\ref{lem:GHP-ineqs} imply that $a_{\epsilon}(\mathcal X_n,\mathcal X)<\frac{\epsilon}{2}$. So Lemma~\ref{lem:a} gives $a(\epsilon,r;\mathcal X_n,\mathcal X)<\frac{\epsilon}{2}$. Now, the claim is implied by~\eqref{eq:a_r}.
	
	\eqref{thm:convergence:2}$\Rightarrow$\eqref{thm:convergence:3}. The claim of part~\eqref{thm:convergence:3} is directly implied from part~\eqref{thm:convergence:2} by replacing $r$ with $r+\epsilon$.
	
	\eqref{thm:convergence:3}$\Rightarrow$\eqref{thm:convergence:1}. Let $\epsilon>0$ be arbitrary and $r=1/(2\epsilon)$. Assume $n$ is large enough such that there exist compact \pmm-subspaces $\pcball{\mathcal X}{r}\preceq \mathcal X'\preceq\mathcal X$ and $\pcball{\mathcal X}{r}_n\preceq \mathcal X'_n\preceq\mathcal X_n$ such that $\cghp(\mathcal X',\mathcal X'_n)<\epsilon$. By Lemma~\ref{lem:GHP-monotone}, there exists a compact \pmm-subspace $\pcball{\mathcal X}{r-2\epsilon}\preceq \mathcal X''\preceq\mathcal X$ such that $\ghp(\pcball{\mathcal X}{r}_n, \mathcal X'')<\epsilon$. This implies that $a(2\epsilon,r;\mathcal X_n,\mathcal X)<\epsilon$, hence, $a_{2\epsilon}(\mathcal X_n,\mathcal X)<\epsilon$. Similarly, $a_{2\epsilon}(\mathcal X,\mathcal X_n)<\epsilon$, which implies that $\ghp(\mathcal X_n,\mathcal X)\leq 2\epsilon$. This proves that $\mathcal X_n\to\mathcal X$.
	
	\eqref{thm:convergence:2}$\Rightarrow$\eqref{thm:convergence:4}. Let $\mathcal X_n=:(X_n,o_n,\mu_n)$, $\mathcal X=:(X,o,\mu)$ and $r$ be a continuity radius for $\mathcal X$. 
	Let $\epsilon>0$ be arbitrary. The assumption on $r$ implies that there exists $\delta>0$ such that
	\begin{eqnarray*}
		\label{eq:thm:convergence:1}
		\hausdorff(\cball{r+\delta}{o} ,\cball{r-\delta}{o})&\leq& \epsilon,\\
		\label{eq:thm:convergence:2}
		\prokhorov(\restrict{\mu}{\cball{r+\delta}{o}}, \restrict{\mu}{\cball{r-\delta}{o}})&\leq&\epsilon.
	\end{eqnarray*}
	Part~\eqref{thm:convergence:2}, which is assumed, implies that for large enough $n$, there exists a compact \pmm-subspace $\mathcal Y_n$ of $\mathcal X$ such that $\pcball{\mathcal X}{r-\delta} \preceq\mathcal Y_n\preceq X$  and $\cghp(\pcball{\mathcal X}{r}_n, \mathcal Y_n)<\delta/2$. 
	The latter and Lemma~\ref{lem:radius} imply that $\mathcal Y_n\preceq\pcball{\mathcal X}{r+\delta}$. Now, $\mathcal Y_n$ and $\pcball{\mathcal X}{r}$ both contain (ass \pmm-subspaces) $\pcball{\mathcal X}{r-\delta}$ and are contained in $\pcball{\mathcal X}{r+\delta}$. By using the definitions~\eqref{eq:hausdorff} and~\eqref{eq:dProkhorov} of the Hausdorff and the Prokhorov metrics directly, one can deduce that $\hp(\mathcal Y_n,\pcball{\mathcal X}{r})\leq \hp(\pcball{\mathcal X}{r+\delta},\pcball{\mathcal X}{r-\delta})\leq \epsilon$. So~\eqref{eq:hp-ineq} implies that $\cghp(\mathcal Y_n,\pcball{\mathcal X}{r})\leq \epsilon$. 
	Finally, the triangle inequality gives $\cghp(\pcball{\mathcal X}{r}_n,\pcball{\mathcal X}{r})<\epsilon+\frac \delta 2$. Since $\epsilon,\delta$ are arbitrarily small, this implies that $\pcball{\mathcal X}{r}_n\to\pcball{\mathcal X}{r}$ and the claim is proved.

	\eqref{thm:convergence:4}$\Rightarrow$\eqref{thm:convergence:5}. The claim is implied by Lemma~\ref{lem:continuityRadius}.

	\eqref{thm:convergence:5}$\Rightarrow$\eqref{thm:convergence:1}. 
	The claim is easily implied by part~\eqref{lem:ghpVScghp:1} of Lemma~\ref{lem:ghpVScghp} and is left to the reader.
	
	\eqref{thm:convergence:4}$\Rightarrow$\eqref{thm:convergence:6}. By Lemma~\ref{lem:cadlag2}, the integrand is a c\`adl\`ag function of $r$, and hence, measurable. Since $\mathcal X$ has countably many discontinuity radii (Lemma~\ref{lem:continuityRadius}), the claim follows by Lebesgue's dominated convergence theorem.
	
	\eqref{thm:convergence:6}$\Rightarrow$\eqref{thm:convergence:4}. To prove this part, some care is needed since the converse of the dominated convergence theorem does not hold in general, and hence, the above arguments do not work. %
	Let $r$ be a continuity radius of $\mathcal X$ and $0<\epsilon<1$. By Definition~\ref{def:continuityRadius}, there exists $\delta>0$ such that $\delta<\epsilon/2$ and
	\[
	\hp(\pcball{\mathcal X}{r+2\delta}, \pcball{\mathcal X}{r-2\delta})< \frac{\epsilon}{2}.
	\]
	Let $\gamma_n(s):=\cghp(\pcball{\mathcal X}{s}_n,\pcball{\mathcal X}{s})$.
	By~\eqref{thm:convergence:6}, there exists $N$ such that for all $n\geq N$,
	\begin{equation}
	\label{eq:thm:convergence:3}
	\int_0^{\infty} e^{-s}\left(1\wedge \gamma_n(s)\right)ds <\delta e^{-r}.
	\end{equation}
	To prove the claim, it is enough to show that for all $n\geq N$, one has 
	$\gamma_n(r)\leq\epsilon$.
	Let $n\geq N$ be arbitrary. %
	First, assume that there exists $s>r$ such that 
	$\gamma_n(s)\leq \delta$.
	By Lemmas~\ref{lem:GHP-monotone} and~\ref{lem:radius}, there exists a compact \pmm{}-subspace $\mathcal X'\preceq \mathcal X$ such that $\cghp(\pcball{\mathcal X}{r}_n,\mathcal X')\leq \delta$ and  $\pcball{\mathcal X}{r-2\delta}\preceq\mathcal X'\preceq \pcball{\mathcal X}{r+2\delta}$. It can be seen that the latter implies that
	$$\hp(\pcball{\mathcal X}{r},\mathcal X')\leq \hp(\pcball{\mathcal X}{r+2\delta}, \pcball{\mathcal X}{r-2\delta})<\frac{\epsilon}{2}.$$
	The triangle inequality for $\cghp$ gives that
	\[
	\gamma_n(r)=\cghp(\pcball{\mathcal X}{r}_n,\pcball{\mathcal X}{r}) \leq \cghp(\pcball{\mathcal X}{r}_n,\mathcal X') + \hp(\mathcal X', \pcball{\mathcal X}{r})<\epsilon.
	\]
	So the claim is proved in this case. Second, assume that for all $s>r$, one has $\gamma(s)>\delta$. This gives that $\int_0^{\infty}e^{-s}(1\wedge \gamma(s))ds \geq \delta e^{-r}$. This contradicts~\eqref{eq:thm:convergence:3}. So the claim is proved.
\end{proof}

It is known that convergence under the metric $\cghp$ can be expressed using \textit{approximate GHP-isometries} (see e.g., page~767 of~\cite{bookVi10} and Corollary~7.3.28 of~\cite{bookBBI}). This is expressed in the following lemma, whose proof is skipped. %

An \defstyle{$\epsilon$-isometry} (see e.g., \cite{bookBBI}) between metric spaces $X$ and $Y$ is a function $f:X\to Y$ such that $\sup\{\norm{d(x_1,x_2)-d(f(x_1),f(x_2))}:x_1,x_2\in X \}\leq\epsilon$ and for every $y\in Y$, there exists $x\in X$ such that $d(y,f(x))\leq \epsilon$.

\begin{lemma}
	\label{lem:GHPtopology}
	Let $\mathcal X=(X,o,\mu)$ and $\mathcal X_n=(X_n,o_n,\mu_n)$ be compact \pmm-spaces ($n=1,2,\ldots$). Then $\mathcal X_n\to\mathcal X$ in the metric $\cghp$ if and only if for every $\epsilon>0$, for large enough $n$, there exists a measurable $\epsilon$-isometry $f:X_n\to X$ such that $f(o_n)=o$ and $\prokhorov(f_*\mu_n,\mu)<\epsilon$.
\end{lemma}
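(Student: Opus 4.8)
The plan is to prove both implications by passing through explicit metric-space realizations, exactly in the spirit of Corollary~7.3.28 of~\cite{bookBBI}, and then to add the two ingredients special to the pointed measured setting: the behaviour of the root, and of the pushed-forward measure. Throughout I will use that the Prokhorov distance between two measures supported on a closed subspace $X$ of an ambient space $Z$ is the same whether computed in $X$ or in $Z$.

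\textbf{From convergence to $\epsilon$-isometries.} Suppose $\mathcal X_n\to\mathcal X$ and put $\eta_n:=\cghp(\mathcal X_n,\mathcal X)\to 0$. By Corollary~\ref{cor:cghp-inf} there are a metric space $Z_n$ and isometric embeddings of $X_n$ and $X$ into $Z_n$ (henceforth treated as subsets of $Z_n$) with $d(o_n,o)\vee\hausdorff(X_n,X)\vee\prokhorov(\mu_n,\mu)\leq\eta_n$. I would then construct $f_n:X_n\to X$ as a measurable ``almost nearest point'' map: fix a finite $\eta_n$-net $\{y_1,\dots,y_k\}$ of the compact space $X$, partition $X$ into Borel pieces $B_i\subseteq\cball{\eta_n}{y_i}$, and set $f_n(x):=y_{i(x)}$, where $i(x)$ is the least index with $d\big(x,\overline{B_i}\big)\leq\eta_n$ (such an index exists since $\hausdorff(X_n,X)\leq\eta_n$, and $i(\cdot)$ is measurable as a minimum of finitely many measurable conditions), while overriding this definition by $f_n(o_n):=o$. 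One checks $d(x,f_n(x))\leq 2\eta_n$ for every $x\in X_n$; the triangle inequality in $Z_n$ then shows $f_n$ is a $4\eta_n$-isometry, $f_n(o_n)=o$ by construction, and $\prokhorov(f_{n*}\mu_n,\mu)\leq\prokhorov(f_{n*}\mu_n,\mu_n)+\prokhorov(\mu_n,\mu)\leq 3\eta_n$, using that $f_n$ displaces points by at most $2\eta_n$ and the intrinsic-ness of Prokhorov on the closed subspace $X\subseteq Z_n$. Taking $n$ large enough that $4\eta_n<\epsilon$ finishes this direction.

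\textbf{From $\epsilon$-isometries to convergence.} Conversely, given a measurable $\epsilon$-isometry $f:X_n\to X$ with $f(o_n)=o$ and $\prokhorov(f_*\mu_n,\mu)<\epsilon$ (the case $\epsilon=0$ being trivial), I would glue $X_n$ and $X$ along $f$ in the standard way: on the disjoint union $Z:=X_n\sqcup X$ keep the two original metrics and set $\hat d(x,y):=\inf_{x'\in X_n}\big(d(x,x')+\epsilon+d(f(x'),y)\big)$ for $x\in X_n$, $y\in X$. The $\epsilon$-isometry property makes $\hat d$ a genuine metric for which both inclusions are isometric, with $\hausdorff_Z(X_n,X)\leq 2\epsilon$ and $\hat d(o_n,o)\leq\epsilon$. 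Since $(\mathrm{id},f)_*\mu_n$ is a coupling of $\mu_n$ and $f_*\mu_n$ concentrated where $\hat d\leq\epsilon$, the easy direction of Theorem~\ref{thm:strassen} gives $\prokhorov_Z(\mu_n,f_*\mu_n)\leq\epsilon$, hence $\prokhorov_Z(\mu_n,\mu)\leq 2\epsilon$. Plugging $Z$ into~\eqref{eq:GHPcompact} yields $\cghp(\mathcal X_n,\mathcal X)\leq 2\epsilon$, and as $\epsilon$ is arbitrary, $\mathcal X_n\to\mathcal X$.

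\textbf{Main difficulty.} The only genuinely delicate point is producing a \emph{measurable} map $f_n$ in the first direction rather than an arbitrary selection; this is what the net-plus-Borel-partition device is for (the same issue underlying measurable versions of the Gromov--Hausdorff correspondence lemmas), together with the small check that overriding $f_n$ at the single point $o_n$ preserves both measurability and the displacement bound. Verifying that $\hat d$ is a metric with isometric inclusions, and the handful of elementary Prokhorov estimates, are routine. (As an alternative to the embedding argument on the measure side of the first direction, one could instead build a correspondence $R$ directly from $f_n$ and combine Theorem~\ref{thm:strassen} with Theorem~\ref{thm:GHPcompact}, composing the graph measure of $f_n$ with a near-optimal coupling realizing $\prokhorov(f_{n*}\mu_n,\mu)$; but the gluing argument is shorter and symmetric with the second direction.)
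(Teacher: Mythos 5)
The paper gives no proof of Lemma~\ref{lem:GHPtopology} (it is explicitly skipped, with pointers to Corollary~7.3.28 of~\cite{bookBBI} and p.~767 of~\cite{bookVi10}), and your argument is a correct, self-contained version of exactly that standard route: realize a near-optimal embedding via Corollary~\ref{cor:cghp-inf} and take a measurable nearest-net-point map in one direction, and glue along the $\epsilon$-isometry and invoke the easy half of Theorem~\ref{thm:strassen} (plus the triangle inequality and the fact that the Prokhorov distance is intrinsic to a closed subspace) in the other; all the displacement, distortion, Hausdorff and Prokhorov estimates you state check out. The only cosmetic point is the degenerate case $\eta_n=0$, where a finite $\eta_n$-net of $X$ need not exist: either use mesh $\max(\eta_n,\delta)$ for a small $\delta>0$ (the same bounds then give a $(4\eta_n+4\delta)$-isometry, still below $\epsilon$), or note that in that case $\mathcal X_n$ and $\mathcal X$ are GHP-isometric and $f_n$ can be taken to be a GHP-isometry.
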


In fact, one can prove a quantitative form of this lemma that relates the existence of such $f$ to the value of $\cghp(\mathcal X_n,\mathcal X)$ (similarly to Equation~(27.3) of~\cite{bookVi10}).

The notion of approximate GHP-isometries is also used in~\cite{bookBBI} and Definition~27.30 of~\cite{bookVi10} to define convergence of boundedly-compact \pms{} spaces and \pmm{} spaces as follows: %
$(X_n,o_n,\mu_n)$ tends to $(X,o,\mu)$ when there exist sequences $r_k\to\infty$ and $\epsilon_k\to 0$ and measurable $\epsilon_k$-isometries $f_k:\cball{r_k}{o_k}\to\cball{r_k}{o}$ such that $f_{k*}\mu_k$ tends to $\mu$ in the weak-$*$ topology (convergence against compactly supported continuous functions). %
By part~\eqref{thm:convergence:5} of Theorem~\ref{thm:convergence}, the reader can verify the following.

\begin{theorem}
	\label{thm:metrization}
	The metric $\ghp$ is a metrization of the %
	Gromov-Hausdorff-Prokhorov topology (Definition~27.30 of~\cite{bookVi10}).
\end{theorem}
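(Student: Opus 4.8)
The plan is to deduce everything from Theorem~\ref{thm:convergence} (the equivalent characterizations of $\ghp$-convergence) together with Lemma~\ref{lem:GHPtopology} (the $\epsilon$-isometry description of $\cghp$-convergence). Since $\ghp$ is a metric (Theorem~\ref{thm:GHP-Metric}) and the Gromov--Hausdorff--Prokhorov topology is, by Definition~27.30 of~\cite{bookVi10}, characterized by its convergent sequences, it suffices to prove, for boundedly-compact \pmm{} spaces $\mathcal X$ and $(\mathcal X_n)_n$, that $\mathcal X_n\to\mathcal X$ in $\ghp$ if and only if there exist $r_k\to\infty$, $\epsilon_k\to 0$ and measurable $\epsilon_k$-isometries $f_k\colon\cball{r_k}{o_k}\to\cball{r_k}{o}$ (with $d(f_k(o_k),o)\le\epsilon_k$, as is implicit in the pointed setting of Definition~27.30 and Corollary~7.3.28 of~\cite{bookBBI}) such that $f_{k*}\mu_k\to\mu$ in the weak-$*$ topology. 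By Lemma~\ref{lem:continuityRadius} the continuity radii of $\mathcal X$ form an unbounded set, so we may fix continuity radii $s_1<s_2<\cdots$ with $s_j\to\infty$.

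For the forward implication, assume $\mathcal X_n\to\mathcal X$ in $\ghp$. By the equivalence \eqref{thm:convergence:1}$\Leftrightarrow$\eqref{thm:convergence:4} of Theorem~\ref{thm:convergence}, $\pcball{\mathcal X}{s_j}_n\to\pcball{\mathcal X}{s_j}$ in $\cghp$ for each $j$. Applying Lemma~\ref{lem:GHPtopology} to these compact \pmm{} spaces yields integers $N_1<N_2<\cdots$ such that for every $n\geq N_j$ there is a measurable $(1/j)$-isometry $g_n\colon\cball{s_j}{o_n}\to\cball{s_j}{o}$ with $g_n(o_n)=o$ and $\prokhorov\big(g_{n*}(\restrict{\mu_n}{\cball{s_j}{o_n}}),\restrict{\mu}{\cball{s_j}{o}}\big)<1/j$. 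Putting $r_n:=s_j$, $\epsilon_n:=1/j$ and $f_n:=g_n$ for $N_j\leq n<N_{j+1}$ gives $r_n\to\infty$, $\epsilon_n\to 0$, and $f_{n*}\mu_n\to\mu$ weakly-$*$: for a continuous $\phi$ with compact support in $\cball{R}{o}$ and $n$ large enough that $r_n>R$, the quantity $\big|\int\phi\,d(f_{n*}\mu_n)-\int\phi\,d\mu\big|$ is bounded in terms of the Prokhorov distance above and tends to $0$. This is convergence in the sense of Definition~27.30.

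For the converse, assume convergence in the sense of Definition~27.30 with data $(r_k,\epsilon_k,f_k)$. By the equivalence \eqref{thm:convergence:1}$\Leftrightarrow$\eqref{thm:convergence:5} of Theorem~\ref{thm:convergence}, it is enough to show $\pcball{\mathcal X}{r}_k\to\pcball{\mathcal X}{r}$ in $\cghp$ for every continuity radius $r$ of $\mathcal X$. Fix such an $r$ and $\eta>0$. Using Lemma~\ref{lem:cadlag} and the fact that $r$ is a continuity radius, pick $\delta\in(0,\eta)$ with $\hp\big(\pcball{\mathcal X}{r+2\delta},\pcball{\mathcal X}{r-2\delta}\big)<\eta$. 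For $k$ large enough that $r_k>r+1$ and $\epsilon_k<\delta$, the restriction of $f_k$ maps $\cball{r}{o_k}$ into $\cball{r+\delta}{o}$, every point of $\cball{r-\delta}{o}$ lies within $\epsilon_k$ of its image, the induced correspondence has distortion at most $3\epsilon_k$, and $f_{k*}(\restrict{\mu_k}{\cball{r}{o_k}})$ converges to $\restrict{\mu}{\cball{r}{o}}$ in the Prokhorov distance as $k\to\infty$ (here the continuity-radius hypothesis guarantees that $\mu$ puts no mass on the sphere of radius $r$, so the weak-$*$ convergence of $f_{k*}\mu_k$ to $\mu$ transfers to these restricted measures). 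Feeding $f_k$ and the inclusion $\cball{r+\delta}{o}\hookrightarrow X$ into the definition~\eqref{eq:GHPcompact} of $\cghp$ and combining with $\hp\big(\pcball{\mathcal X}{r+2\delta},\pcball{\mathcal X}{r-2\delta}\big)<\eta$ through the triangle inequality and~\eqref{eq:hp-ineq}, we obtain $\cghp(\pcball{\mathcal X}{r}_k,\pcball{\mathcal X}{r})\leq C\eta$ for large $k$ with an absolute constant $C$. Hence $\pcball{\mathcal X}{r}_k\to\pcball{\mathcal X}{r}$, and Theorem~\ref{thm:convergence} yields $\mathcal X_k\to\mathcal X$ in $\ghp$.

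The main obstacle is the bookkeeping in the converse direction: converting a single $\epsilon_k$-isometry defined on the large ball $\cball{r_k}{o_k}$ into genuine $\cghp$-smallness of the radius-$r$ balls requires carefully controlling the displacement of points near distance $r$---hence the sandwiching between radii $r\pm\epsilon_k$ and the use of continuity of the ball at $r$---and upgrading weak-$*$ convergence of the full measures to Prokhorov convergence of the restricted measures on a fixed ball, which again relies on the absence of $\mu$-mass on the boundary sphere. The forward direction and the reduction to a statement about convergent sequences are routine consequences of Theorem~\ref{thm:convergence} and Lemma~\ref{lem:GHPtopology}.
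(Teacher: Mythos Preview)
Your proposal is correct and follows exactly the route the paper indicates: the paper gives no detailed proof, only the remark that part~\eqref{thm:convergence:5} of Theorem~\ref{thm:convergence} (together with Lemma~\ref{lem:GHPtopology}) lets the reader verify the claim, and your argument carries out precisely this verification. The only quibble is that in the converse direction you cannot literally plug the $\epsilon_k$-isometry $f_k$ into definition~\eqref{eq:GHPcompact}, which requires isometric embeddings; but since you already note that the induced correspondence has distortion at most $3\epsilon_k$, invoking Theorem~\ref{thm:GHPcompact} instead fixes this immediately.
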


See also Theorem~\ref{thm:metrizationGH} for a version of this result for the Gromov-Hausdorff topology.

\iffalse
\begin{remark}
	\mar{\ater: change place since Polishness is not studied yet.}
	In~\cite{AtLoWi16}, another metrization of the pointed measured Gromov-Hausdorff topology is presented on a proper subset of $\mmstar$ and its Polishness is proved. However, the spaces $\mmstar$ and $\mmcstar$ are not studied in~\cite{AtLoWi16}.
	\\
	Roughly speaking, \cite{AtLoWi16} considers the set $F\subseteq\mmstar$ of equivalence classes of \pmm{} spaces $(X,o,\mu)$ in which $\supp(\mu)=X$. This is a proper subset of $\mmstar$ and is not closed. So it is not complete under the metric $\ghp$. Similarly, $F^c:=F\cap \mmcstar$ is not complete under $\cghp$. However, \cite{AtLoWi16} introduces other metrics on $F^c$ and $F$ which generate the same topology as the pointed measured Gromov-Hausdorff topology and make them complete and separable. See Section~5 of~\cite{AtLoWi16}.
\end{remark}
\fi
%
%
%

\iffalse
\begin{remark}
	One can remove the condition  in ... in the definition of $a$ (\ater: keep or delete?).
\end{remark}
\fi

%
%
%
%
%
%
%
%
%
%
%
%
%
%
%
%
%
%
%
%
%
%
%
%
%
%
%
%
%
%

\subsection{Completeness, Separability and Pre-Compactness}
\label{subsec:polishness}
The following two theorems are the main results of this subsection. Recall that a Polish space is a topological space which is homeomorphic to a complete separable metric space.

\begin{theorem}
	\label{thm:GHPcomplete}
	Under the GHP metric, $\mmstar$ is a complete separable metric space.
\end{theorem}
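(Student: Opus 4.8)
The plan is to prove separability and completeness separately, each reducing to the corresponding (already-cited) fact for the compact metric $\cghp$ on $\mmcstar$ together with the localization machinery built in Subsections~\ref{subsec:ghp-nonCompact} and~\ref{subsec:convergence}. For separability, I would start from the fact that $(\mmcstar,\cghp)$ is separable. Fix a countable dense set $\mathcal D\subseteq\mmcstar$. Given an arbitrary $\mathcal X\in\mmstar$ and $\epsilon>0$, pick $r$ with $1/r<\epsilon$ and choose $\mathcal D\ni\mathcal D_0$ with $\cghp(\pcball{\mathcal X}{r},\mathcal D_0)$ small; by part~\eqref{lem:ghpVScghp:1} of Lemma~\ref{lem:ghpVScghp}, $\ghp(\mathcal X,\mathcal D_0)\le \tfrac1r\vee 2\cghp(\pcball{\mathcal X}{r},\mathcal D_0)<\epsilon$. (One should note that a compact \pmm{} space is in particular boundedly compact, so $\mathcal D\subseteq\mmstar$.) Hence $\mathcal D$ is dense in $(\mmstar,\ghp)$, giving separability.

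For completeness, let $(\mathcal X_n)_n$ be a $\ghp$-Cauchy sequence; I want to produce a limit $\mathcal X\in\mmstar$. The idea is to reconstruct $\mathcal X$ as an increasing union of its balls. By passing to a subsequence I may assume $\ghp(\mathcal X_n,\mathcal X_{n+1})<2^{-n}$, so that $\ghp(\mathcal X_n,\mathcal X_m)<2^{-n+1}$ for $m>n$. Fix any integer $r$. Using Lemma~\ref{lem:GHP-ineqs} together with the definition~\eqref{eq:a_r} of $a$ and Lemma~\ref{lem:GHP-monotone}, for $n$ large (so that $2^{-n+1}<1/r$) one gets, for each $m>n$, a compact \pmm-subspace of $\mathcal X_m$ close in $\cghp$ to a compact \pmm-subspace of $\mathcal X_n$ sandwiched between $\pcball{(\mathcal X_n)}{r-\epsilon_n}$ and $\pcball{(\mathcal X_n)}{r+\epsilon_n}$ (Lemma~\ref{lem:radius} gives the upper sandwich). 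So $\bigl(\pcball{(\mathcal X_n)}{r}\bigr)_n$ is, up to controlled perturbations that shrink as $n\to\infty$, a $\cghp$-Cauchy sequence of compact \pmm{} spaces; by completeness of $(\mmcstar,\cghp)$ it converges to some compact \pmm{} space $\mathcal Y_r$. The content of Lemma~\ref{lem:GHP-monotone}(ii) shows these limits are consistent: for $r<r'$, $\mathcal Y_r$ embeds as a \pmm-subspace of $\mathcal Y_{r'}$ containing $\pcball{(\mathcal Y_{r'})}{r}$, and in fact $\mathcal Y_r=\pcball{(\mathcal Y_{r'})}{r}$ once $r$ is a continuity radius — this is the delicate point. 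The glued object $\mathcal X:=\bigcup_r \mathcal Y_r$, with the common root and the measure which restricts to (the measure of) $\mathcal Y_r$ on each ball, is then a boundedly-compact \pmm{} space, and $\pcball{(\mathcal X_n)}{r}\to\pcball{\mathcal X}{r}$ in $\cghp$ along an unbounded set of radii $r$, so $\mathcal X_n\to\mathcal X$ by part~\eqref{thm:convergence:5} of Theorem~\ref{thm:convergence}. Since the original sequence was Cauchy and a subsequence converges, the whole sequence converges.

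The main obstacle is the gluing/consistency step: one must carefully build the limit metric space and measure from the family $(\mathcal Y_r)_r$ of compact limits so that the balls of the glued space really are the $\mathcal Y_r$'s (at continuity radii), and so that the root and measure are well defined. The natural tool is again Lemma~\ref{lem:infinity} (König's infinity lemma for compact sets): one takes $C_n$ to be an appropriate (compact, in the uniform topology) set of pairs of a \pmm-isometry realizing $\mathcal Y_{n-1}$ inside $\mathcal Y_n$ and the relevant approximate-isometry data, with the restriction maps as the bonding functions $f_n$, exactly as in the last paragraph of the proof of Theorem~\ref{thm:GHP-Metric}; König's lemma then yields a coherent system of embeddings $\mathcal Y_1\hookrightarrow\mathcal Y_2\hookrightarrow\cdots$ whose direct limit is the desired $\mathcal X$. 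Checking that $\mathcal X$ is boundedly compact (each ball $\cball{r}{o}$ sits inside the compact $\mathcal Y_{r'}$ for $r'>r$) and that $\mu$ is boundedly finite is then routine, as is verifying the $\cghp$-convergence of the balls along continuity radii. I would also remark that completeness could alternatively be deduced from Remark~\ref{rem:variant} by comparing with the integral metric~\eqref{eq:lengthSpaceGHP}, but the direct ball-by-ball argument is cleaner and self-contained.
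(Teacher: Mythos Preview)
Your separability argument is the same as the paper's (modulo the trivial observation that $\ghp(\mathcal X,\pcball{\mathcal X}{r})\le 1/r$, which is what the paper uses in place of your invocation of Lemma~\ref{lem:ghpVScghp}). For completeness, however, the paper takes a shorter route: rather than building the limit space by hand, it simply shows that a $\ghp$-Cauchy sequence $(\mathcal X_n)_n$ is pre-compact and then invokes Theorem~\ref{thm:GHP-precompactness}. Concretely, for fixed $r$ and $\epsilon$ it picks $m$ with $\ghp(\mathcal X_n,\mathcal X_m)<\epsilon$ for all $n>m$, uses Lemma~\ref{lem:GHP-ineqs} and~\eqref{eq:a_r} to find $\mathcal Z_{m,n}\preceq\pcball{\mathcal X}{r+\epsilon}_m$ with $\cghp(\pcball{\mathcal X}{r}_n,\mathcal Z_{m,n})<\epsilon/2$, and then Lemma~\ref{lem:GHP-subsets} gives a $\cghp$-convergent subsequence; a diagonal argument over $\epsilon\to 0$ yields a $\cghp$-Cauchy subsequence of balls, and completeness of $\cghp$ finishes. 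All of the gluing work you sketch (consistency of the $\mathcal Y_r$, bounded compactness of the glued space, identification of balls) is already contained in the proof of the ``if'' direction of Theorem~\ref{thm:GHP-precompactness}, so there is no need to repeat it. Your proposal is thus correct in outline but redundant with that theorem; in particular, the appeal to K\"onig's lemma (Lemma~\ref{lem:infinity}) is unnecessary here --- the paper only uses it in Theorem~\ref{thm:GHP-Metric} to produce a single global isometry, whereas the gluing of the $\mathcal Y_m$'s in Theorem~\ref{thm:GHP-precompactness} proceeds by a straightforward induction (choose once and for all an embedding of $\mathcal Y_{m-1}$ into $\mathcal Y_m$ and take the direct limit).
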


The proof of Theorem~\ref{thm:GHPcomplete} is postponed to after proving Theorem~\ref{thm:GHP-precompactness}.

Recall that a subset $S$ of a metric space $X$ is \textit{relatively compact} (or \textit{pre-compact}) when every sequence in $S$ has a subsequence which is convergent in $X$; i.e., the closure of $S$ in $X$ is compact. The following gives a pre-compactness criteria for the GHP metric. %

\begin{theorem}[Pre-compactness]
	\label{thm:GHP-precompactness}
	A subset $\mathcal C\subseteq \mmstar$ is relatively compact under the GHP metric if and only if for each $r\geq 0$, the set of (equivalence classes of the)
	balls $\mathcal C_r:=\{\pcball{\mathcal X}{r}: \mathcal X\in \mathcal C \}$ is relatively compact under the metric $\cghp$.
\end{theorem}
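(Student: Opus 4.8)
The plan is to prove both directions by relating sequences in $\mathcal C$ to sequences of their balls, using the convergence characterization of Theorem~\ref{thm:convergence} (parts \eqref{thm:convergence:1}, \eqref{thm:convergence:4}, \eqref{thm:convergence:5}) together with the diagonal argument. First I would dispose of the easy direction: suppose $\mathcal C$ is relatively compact under $\ghp$. Fix $r\ge 0$ and take any sequence $(\pcball{\mathcal X_n}{r})_n$ in $\mathcal C_r$ with $\mathcal X_n\in\mathcal C$. By relative compactness, pass to a subsequence with $\mathcal X_n\to\mathcal X$ in $\ghp$ for some $\mathcal X\in\mmstar$. Choose a continuity radius $r'\ge r$ of $\mathcal X$ (possible by Lemma~\ref{lem:continuityRadius}); Theorem~\ref{thm:convergence}\eqref{thm:convergence:4} gives $\pcball{\mathcal X_n}{r'}\to\pcball{\mathcal X}{r'}$ in $\cghp$. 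Now $\pcball{\mathcal X_n}{r}$ is a compact \pmm-subspace of $\pcball{\mathcal X_n}{r'}$ containing $\pcball{(\pcball{\mathcal X_n}{r'})}{r}$, so by Lemma~\ref{lem:GHP-monotone} (applied with the two compact spaces $\pcball{\mathcal X_n}{r'}$ and $\pcball{\mathcal X}{r'}$) there is a compact \pmm-subspace $\mathcal W_n\preceq\pcball{\mathcal X}{r'}$ with $\cghp(\pcball{\mathcal X_n}{r},\mathcal W_n)\le\cghp(\pcball{\mathcal X_n}{r'},\pcball{\mathcal X}{r'})\to 0$ and, for $n$ large, $\pcball{(\pcball{\mathcal X}{r'})}{r-2\epsilon_n}\preceq\mathcal W_n$. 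By Lemma~\ref{lem:GHP-subsets} the \pmm-subspaces of $\pcball{\mathcal X}{r'}$ form a $\cghp$-compact set, so a further subsequence gives $\mathcal W_n\to\mathcal W$ in $\cghp$; hence $\pcball{\mathcal X_n}{r}\to\mathcal W$ in $\cghp$, establishing relative compactness of $\mathcal C_r$.

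For the converse, assume each $\mathcal C_r$ is relatively compact under $\cghp$. Take any sequence $(\mathcal X_n)_n$ in $\mathcal C$; I must extract a $\ghp$-convergent subsequence. The idea is a diagonal extraction over a countable unbounded set of radii, say $r\in\mathbb N$: for each $k\in\mathbb N$, relative compactness of $\mathcal C_k$ lets me pass to a subsequence along which $\pcball{\mathcal X_n}{k}$ converges in $\cghp$ to some compact \pmm{} space $\mathcal Z_k$; taking a diagonal subsequence, I get a single subsequence (still denoted $(\mathcal X_n)_n$) with $\pcball{\mathcal X_n}{k}\to\mathcal Z_k$ in $\cghp$ for every $k\in\mathbb N$. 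The key structural step is to show the limits $\mathcal Z_k$ are mutually compatible, namely that for $j<k$ there is a natural identification of $\pcball{\mathcal Z_k}{j}$ with (a \pmm-subspace close to) $\mathcal Z_j$, so that they glue into a single boundedly-compact \pmm{} space $\mathcal X=(X,o,\mu)$ with $\pcball{\mathcal X}{j}$ GHP-isometric to $\mathcal Z_j$ for a cofinal set of $j$. Concretely: since $\pcball{\mathcal X_n}{j}$ is obtained from $\pcball{\mathcal X_n}{k}$ by restriction, passing to the $\cghp$-limit and using Lemma~\ref{lem:GHP-monotone} and Lemma~\ref{lem:GHP-subsets} as above shows $\pcball{\mathcal X_n}{j}$ is $\cghp$-close to a suitable \pmm-subspace of $\pcball{\mathcal X_n}{k}$, whence in the limit $\mathcal Z_j$ is GHP-isometric to a \pmm-subspace of $\mathcal Z_k$ sandwiched between $\pcball{(\mathcal Z_k)}{j-\eta}$ and $\pcball{(\mathcal Z_k)}{j}$; letting $\eta\downarrow 0$ forces $\mathcal Z_j=\pcball{(\mathcal Z_k)}{j}$ for every $j$ which is a continuity radius of $\mathcal Z_k$. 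Arguing as in the last part of the proof of Theorem~\ref{thm:GHP-Metric}, the resulting GHP-isometries $\pcball{(\mathcal Z_k)}{j}\to\mathcal Z_j$ (for $j<k$) can be chosen compatibly: for fixed large $j$ the maps from $\pcball{(\mathcal Z_k)}{j}$ as $k$ varies form, after a further diagonal extraction or a König's-lemma argument (Lemma~\ref{lem:infinity}) on the compact sets of such isometries, a consistent inverse system, so I obtain a boundedly-compact \pmm{} space $\mathcal X$ with $\pcball{\mathcal X}{j}$ GHP-isometric to $\mathcal Z_j$ for an unbounded set of $j$.

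Granting this, the conclusion $\mathcal X_n\to\mathcal X$ in $\ghp$ is immediate from Theorem~\ref{thm:convergence}\eqref{thm:convergence:5}: for each $j$ in an unbounded set $I\subseteq\mathbb R^{\ge 0}$ one has $\pcball{\mathcal X_n}{j}\to\mathcal Z_j\cong\pcball{\mathcal X}{j}$ in $\cghp$, which is exactly criterion \eqref{thm:convergence:5}. I expect the main obstacle to be the compatibility/gluing step — arranging the limit balls $\mathcal Z_k$ and the GHP-isometries between them into a single coherent inverse system so that a genuine boundedly-compact \pmm{} space $\mathcal X$ emerges. This is where one must be careful about discontinuity radii (so as to pin down $\mathcal Z_j=\pcball{(\mathcal Z_k)}{j}$ only at continuity radii of $\mathcal Z_k$, of which there are cofinally many by Lemma~\ref{lem:continuityRadius}) and about choosing the isometries consistently, which is precisely the role of König's infinity lemma for compact sets (Lemma~\ref{lem:infinity}), exactly as in the proof that $\ghp(\mathcal X,\mathcal Y)=0$ implies GHP-isometry. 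The remaining verifications — that restriction is continuous under $\cghp$ on a compatible family, and that the glued map is a GHP-isometry — are routine given Lemmas~\ref{lem:radius}, \ref{lem:GHP-monotone} and~\ref{lem:GHP-subsets}.
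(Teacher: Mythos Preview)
Your forward direction is correct and essentially matches the paper's --- you route through Theorem~\ref{thm:convergence}\eqref{thm:convergence:4} while the paper works directly with $a(\epsilon,r;\cdot,\cdot)$, but both reach the same conclusion via Lemmas~\ref{lem:GHP-monotone} and~\ref{lem:GHP-subsets}.

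For the backward direction, your outline (diagonal extraction, glue the limits, verify convergence) is right, but you overcomplicate the gluing and leave a gap. The gap: you want $\mathcal Z_j=\pcball{(\mathcal Z_k)}{j}$ whenever $j$ is a continuity radius of $\mathcal Z_k$, and then to invoke Theorem~\ref{thm:convergence}\eqref{thm:convergence:5} on an unbounded set of such $j$. But your diagonal runs only over $j\in\mathbb N$, and it can happen that \emph{every} integer is a discontinuity radius of some $\mathcal Z_k$ (Lemma~\ref{lem:continuityRadius} only says the discontinuity set is countable, not that it misses $\mathbb N$); you would need to rerun the diagonal over a countable dense set of radii and then choose $I$ avoiding the countable union of discontinuity sets --- doable, but not what you wrote. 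Relatedly, K\"onig's lemma is unnecessary here: unlike the $\ghp=0$ step in Theorem~\ref{thm:GHP-Metric}, where one must thread a single isometry between two \emph{fixed} spaces through all radii, here you only need \emph{some} embedding $\mathcal Z_{m-1}\hookrightarrow\mathcal Z_m$ at each stage, and a direct limit of such a chain exists automatically.

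The paper's route is cleaner precisely because it never seeks exact ball identification. It shows only that $\mathcal Y_{m-1}$ is GHP-isometric to \emph{some} \pmm-subspace of $\mathcal Y_m$ (via Lemmas~\ref{lem:GHP-monotone} and~\ref{lem:GHP-subsets}), takes the direct limit $\mathcal Y$ along the resulting chain, and then establishes the sandwich $\poball{\mathcal Y}{m}\preceq\mathcal Y_m\preceq\pcball{\mathcal Y}{m}$. That is enough to see $\mathcal Y$ is boundedly compact and to verify $a_\epsilon(\mathcal X_n,\mathcal Y)\to 0$ and $a_\epsilon(\mathcal Y,\mathcal X_n)\to 0$ straight from the definition of $\ghp$ (equivalently, via criterion~\eqref{thm:convergence:2} of Theorem~\ref{thm:convergence} rather than~\eqref{thm:convergence:5}). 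No continuity-radius bookkeeping, no K\"onig's lemma.
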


For a pre-compactness criteria for the metric $\cghp$, see Theorem~2.6 of~\cite{AbDeHo13}. \del{See also Remark~\ref{rem:functor-precompact} below.}

\begin{proof}[Proof of Theorem~\ref{thm:GHP-precompactness}] ($\Rightarrow$).
	First, assume $\mathcal C$ is pre-compact, $r\geq 0$ and $(\mathcal X_n)_n$ is a sequence in $\mathcal C$. We will prove that the sequence $(\pcball{\mathcal X}{r}_n)_n$ has a convergent subsequence, which proves that $\mathcal C_r$ is pre-compact. By pre-compactness of $\mathcal C$, one finds a convergent subsequence of $\mathcal X_n$. So, one may assume $\mathcal X_n\rightarrow \mathcal Y$ under the metric $\ghp$ from the beginning without loss of generality.
	Choose $\epsilon_n>\ghp(\mathcal X_n,\mathcal Y)$ such that  $\epsilon_n\rightarrow 0$. We can assume $\epsilon_n<1$ for all $n$ without loss of generality. Lemma~\ref{lem:GHP-ineqs} implies that $a_{\epsilon_n}(\mathcal X_n,\mathcal Y)< \frac 12\epsilon_n$. So, Lemma~\ref{lem:a}  implies that $\delta_n:=a(1/r, r;\mathcal X_n,\mathcal Y)\rightarrow 0$.
	By the definition of $a$ in~\eqref{eq:a_r} and Lemma~\ref{lem:a-inf}, one finds a \pmm-subspace $\mathcal Y_n$ of $\mathcal Y$ such that 
	\begin{equation}
	\label{eq:thm:GHP-precompactness:1}
	\cghp(\pcball{\mathcal X}{r}_n, \mathcal Y_n)\leq\delta_n.
	\end{equation}
	Lemma~\ref{lem:radius} gives $\mathcal Y_n \preceq \pcball{\mathcal Y}{r+2\delta_n}$. So, by Lemma~\ref{lem:GHP-subsets}, one can find a convergent subsequence of the subspaces $\mathcal Y_n$ under the metric $\cghp$, say tending to $\mathcal Y'\preceq \mathcal Y$. By passing to this subsequence, one may assume $\mathcal  Y_n\rightarrow \mathcal Y'$ from the beginning. Now, \eqref{eq:thm:GHP-precompactness:1} implies that $\cghp(\pcball{\mathcal X}{r}_n, \mathcal Y')\rightarrow 0$, which proves the claim (it should be noted that the limit $\mathcal Y'$ satisfies $\poball{\mathcal Y}{r}\preceq \mathcal Y'\preceq \pcball{\mathcal Y}{r}$, but is not necessarily equal to $\pcball{\mathcal Y}{r}$).

	($\Leftarrow$).
	Conversely assume $\mathcal C_r$ is pre-compact for every $r\geq 0$. Let $(\mathcal X_n)_n$ be a sequence in $\mathcal C$. The claim is that it has a convergent subsequence under the metric $\ghp$. %
	For each given $m\in \mathbb N$, by pre-compactness of $\mathcal C_m$, one finds a subsequence of $(\pcball{\mathcal X}{m}_n)_n$ that is convergent in the $\cghp$ metric. By a diagonal argument, one finds a subsequence $n_1<n_2<\ldots$ such that for every $m\in\mathbb N$, the sequence $\pcball{\mathcal X}{m}_{n_i}$ is convergent as $i\rightarrow \infty$. By passing to this subsequence, we may assume from the beginning that $\pcball{\mathcal X}{m}_{n}$ is convergent as $n\rightarrow\infty$, say, to $\mathcal Y_m$, for each $m\in\mathbb N$ (in the metric $\cghp$); i.e.,  
	\[
	\forall m\in\mathbb N:\pcball{\mathcal X}{m}_{n}\to\mathcal Y_m.
	\]
	The next step is to show that these limiting spaces $\mathcal Y_m$ can be glued together to form a \pmm{} space.
	Let $1<m\in\mathbb N$ be given. For each $n$, Lemma~\ref{lem:GHP-monotone} implies that there is a \pmm-subspace $\mathcal Z_{m,n}$ of $\mathcal Y_m$ %
	such that $\cghp(\pcball{\mathcal X}{m-1}_n, \mathcal Z_{m,n})\leq \cghp(\pcball{\mathcal X}{m}_n,\mathcal Y_m)$. This implies that $\cghp(\pcball{\mathcal X}{m-1}_n, \mathcal Z_{m,n})\rightarrow 0$ as $n\rightarrow\infty$. By Lemma~\ref{lem:GHP-subsets}, the sequence $(\mathcal Z_{m,n})_n$ has a convergent subsequence {in the metric $\cghp$}, say, tending to $\mathcal Z_m\preceq \mathcal Y_m$. Therefore, $\cghp(\pcball{\mathcal X}{m-1}_n, \mathcal Z_{m})$ tends to zero along the subsequence. On the other hand, the definition of $\mathcal Y_{m-1}$ implies that  $\pcball{\mathcal X}{m-1}_n\rightarrow \mathcal Y_{m-1}$ as $n\rightarrow\infty$. Thus, $\cghp(\mathcal Y_{m-1}, \mathcal Z_m)=0$; i.e., $\mathcal Y_{m-1}$ is GHP-isometric to $\mathcal Z_m$ which is a \pmm-subspace of $\mathcal Y_m$. This shows that $\mathcal Y_m$'s can be paste together to form a \pmm{} space which is denoted by $\mathcal Y$. So, from the beginning, we may assume $\mathcal Y_m$ is a \pmm-subspace of $\mathcal Y$ for each $m$. %
	
	In the next step, it will be shown that $\mathcal Y$ is boundedly-compact. \unwritten{In the definition of $a$, the condition $Y'\supseteq ...$ is not necessary in the following argument.}
	The above application of Lemma~\ref{lem:GHP-monotone} also implies that $\mathcal Z_{m,n}$ contains a large ball in $\mathcal Y_m$. More precisely, $\pcball{\mathcal Y}{m-1-\delta_n}_m\preceq \mathcal Z_{m,n}$ for some $\delta_n>0$ that tends to zero.  
	By letting $n$ tend to infinity, we get
	$\poball{\mathcal Y}{m-1}_m\preceq \mathcal Y_{m-1}$ 
	(assuming $\mathcal Y_{m-1}$ is a \pmm-subspace of $\mathcal Y_m$ as above). By an induction, one obtains that $\poball{\mathcal Y}{m-1}_{m'}\preceq\mathcal Y_{m-1}$ for every $m'\geq m$. Now, the definition of $\mathcal Y$ implies that	
	$\poball{\mathcal Y}{m} \preceq \mathcal Y_m$ (note also that $\mathcal Y_m\preceq \pcball{\mathcal Y}{m}$). %
	This implies that %
	$\mathcal Y$ is boundedly-compact. 
	
	The final step is  to show that $\mathcal X_n\rightarrow \mathcal Y$ in the metric $\ghp$.
	Fix $\epsilon>0$ and let $m>1/\epsilon$ be arbitrary. 
	Equation \eqref{eq:a_r} and	$\poball{\mathcal Y}{m}\preceq \mathcal Y_m$ imply that $a(\epsilon,m; \mathcal X_n,\mathcal Y)\leq  \cghp(\pcball{\mathcal X}{m}_n, \mathcal Y_m)$. By using Lemma~\ref{lem:a} and the fact that $\cghp(\pcball{\mathcal X}{m}_n, \mathcal Y_m)$ tends to zero as $n\rightarrow 0$ one can show that $a_{\epsilon}(\mathcal X_n,\mathcal Y)\rightarrow 0$. On the other hand, since $\pcball{\mathcal Y}{m-1}\preceq\poball{\mathcal Y}{m}\preceq\mathcal Y_m$, one can use Lemma~\ref{lem:GHP-monotone} and show that $a(\epsilon,m-1;\mathcal Y,\mathcal X_n)\rightarrow 0$ as $n\rightarrow \infty$. By similar arguments, one can show that $a_{\epsilon}(\mathcal Y,\mathcal X_n)\to 0$.
	This implies that $\ghp(\mathcal X_n,\mathcal Y)<\epsilon$ for large enough $n$ (see Definition~\ref{def:GHP}). Since $\epsilon$ is arbitrary, one gets $\mathcal X_n\rightarrow \mathcal Y$ and the claim is proved.
\end{proof}

\begin{proof}[Proof of Theorem~\ref{thm:GHPcomplete}]
	The definition of the GHP metric directly implies that $$\ghp(\mathcal X, \pcball{\mathcal X}{r})\leq \frac 1 r$$ for every $\mathcal X\in \mmstar$ and $r>0$. 
	Hence, $ \pcball{\mathcal X}{r}\rightarrow \mathcal X$ as $r\rightarrow \infty$. So, the subset $\mmcstar\subseteq \mmstar$ formed by compact spaces is dense. As noted in Subsection~\ref{subsec:GHPcompact}, $\mmcstar$ is separable under the metric $\cghp$. %
	Lemma~\ref{lem:ghpVScghp} implies that $\mmcstar$ is separable under $\ghp$ as well. One obtains that $\mmstar$ is separable.
	
	For proving completeness, assume $(\mathcal X_n)_n$ is a Cauchy sequence in $\mmstar$ under the metric $\ghp$. Below, we will show that this sequence is pre-compact. This proves that there exists a convergent subsequence. Being Cauchy implies convergence of the whole sequence and the claim is proved.
	By Theorem~\ref{thm:GHP-precompactness}, to show pre-compactness of the sequence, it is enough to prove that for a given $r\geq 0$, the sequence of balls $(\pcball{\mathcal X}{r}_n)_n$ is pre-compact 
	under the metric $\cghp$.
	
	Let $0<\epsilon<\frac 1 r$. There exists $m$ such that for all $n>m$,  $\ghp(\mathcal X_n,\mathcal X_m)<\epsilon$. By Lemmas~\ref{lem:GHP-ineqs} and~\ref{lem:a}, one gets $a(\epsilon,r;\mathcal X_n,\mathcal X_m)< \frac{\epsilon}{2}$. Therefore, there exists a compact \pmm-subspace $\mathcal Z_{m,n}$ of $\mathcal X_m$ such that $\cghp(\pcball{\mathcal X}{r}_n, \mathcal Z_{m,n})\leq \frac{\epsilon}{2}$. Lemma~\ref{lem:radius} gives that $\mathcal Z_{m,n}\preceq \pcball{\mathcal X}{r+\epsilon}_m$. So, by Lemma~\ref{lem:GHP-subsets}, the sequence $(\mathcal Z_{m,n})_n$ has a convergent subsequence under the metric $\cghp$, say, tending to  $\mathcal Z_m\preceq\mathcal X_m$.
	Therefore, one finds a subsequence of the balls $(\pcball{\mathcal X}{r}_n)_{n>m}$ such that $\cghp(\pcball{\mathcal X}{r}_n, \mathcal Z_m)<\epsilon$ on the subsequence. Hence, any two elements of the subsequence have distance less than $2\epsilon$. By doing this for different values of $\epsilon$ iteratively; e.g., for $\epsilon = \frac 1{2r}, \frac 1{3r}, \ldots$, and by a diagonal argument, one finds a sequence $n_1,n_2,\ldots$ such that $(\pcball{\mathcal X}{r}_{n_i})_i$ is a Cauchy sequence under the metric $\cghp$. Therefore, by completeness of the metric $\cghp$ (see Subsection~\ref{subsec:GHPcompact}), this sequence is convergent. So, by the arguments of the previous paragraph, the claim is proved.
\end{proof}

\subsection{Random \pmm{} Spaces and Weak Convergence}
\label{subsec:weak}
Theorem~\ref{thm:GHPcomplete} shows that the space $\mmstar$, equipped with the GHP metric $\ghp$, is a Polish space. This enables one to define a \defstyle{random \pmm{} space} $\bs{\mathcal X}$ as a random element in $\mmstar$ and the probability space will be standard. The distribution of $\bs{\mathcal X}$ is the probability measure $\mu$ on $\mmstar$ defined by $\mu(A):=\myprob{\bs{\mathcal X}\in A}$. In this subsection, weak convergence of random \pmm{} spaces are studied.

Let $\bs{\mathcal X}_1,\bs{\mathcal X}_2,\cdots$ and $\bs{\mathcal X}$ be random \pmm{} spaces. Let $\mu_n$ (resp. $\mu$) be the distribution of $\bs{\mathcal X}_n$ (resp. $\bs{\mathcal X}$). Prokhorov's theorem~\cite{Pr56} implies that $\bs {\mathcal X}_n$ converges weakly to $\mathcal X$ if and only if $\prokhorov(\mu_n,\mu)\to 0$, where $\prokhorov$ is the Prokhorov metric corresponding to the metric $\ghp$. 

In the following, let $\prokhorov^c$ be the Prokhorov metric corresponding to the metric $\cghp$ on $\mmcstar$. For given $r\geq 0$, it can be seen that the projection $\mathcal X\mapsto \pcball{\mathcal X}{r}$ from $\mmstar$ to $\mmcstar$ is measurable. So the ball $\pcball{\bs{\mathcal X}}{r}$ is well defined as a random element of $\mmcstar$. Let $\mu^{(r)}$ be the distribution of $\pcball{\bs{\mathcal X}}{r}$.

\begin{lemma}
	\label{lem:weak}
	Let $\bs{\mathcal X}$ and $\bs{\mathcal Y}$ be random \pmm{} spaces with distributions $\mu$ and $\nu$ respectively.
	\begin{enumerate}[(i)]
		\item \label{lem:weak:1} For every $r\geq 0$,
		\[
		\prokhorov(\mu,\nu)\leq \frac 1  r \vee 2\prokhorov^c(\mu^{(r)},\nu^{(r)}).
		\]
		\item \label{lem:weak:2} If $\bs{\mathcal X,\mathcal Y}$ are compact a.s. (i.e., are random elements of $\mmcstar$), then
		\[
		\prokhorov(\mu,\nu)\leq 2\prokhorov^c(\mu,\nu).
		\]
	\end{enumerate}
\end{lemma}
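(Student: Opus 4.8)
The plan is to deduce both inequalities from the coupling characterisation of the Prokhorov metric (Strassen's theorem, i.e. Corollary~\ref{cor:strassen} together with the sufficiency direction of Theorem~\ref{thm:strassen}\eqref{thm:Prokhorov:1}), combined with the pointwise comparisons between $\ghp$ and $\cghp$ proved in Lemma~\ref{lem:ghpVScghp}. Recall that $(\mmstar,\ghp)$ and $(\mmcstar,\cghp)$ are Polish by Theorem~\ref{thm:GHPcomplete} and Subsection~\ref{subsec:GHPcompact}, so Strassen's theorem is available on both; also note that the sufficiency direction of Theorem~\ref{thm:strassen}\eqref{thm:Prokhorov:1} (coupling with small displacement $\Rightarrow$ small Prokhorov distance), applied with $\alpha$ a genuine coupling so that $D(\alpha;\cdot,\cdot)=0$, is exactly "the coupling $\boldsymbol{(\mathcal A,\mathcal B)}$ satisfies $\myprob{d(\boldsymbol{\mathcal A},\boldsymbol{\mathcal B})>\epsilon}\le\epsilon$ implies $\prokhorov\le\epsilon$''.

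I would prove part~\eqref{lem:weak:2} first, as the cleaner case. Since $\mu$ and $\nu$ are supported on $\mmcstar$, put $\delta:=\prokhorov^c(\mu,\nu)$ and apply Corollary~\ref{cor:strassen} on $(\mmcstar,\cghp)$ to get a coupling $\pi$ of $\mu$ and $\nu$ with $\pi(\{(\mathcal A,\mathcal B):\cghp(\mathcal A,\mathcal B)>\delta\})\le\delta$. By Lemma~\ref{lem:ghpVScghp}\eqref{lem:ghpVScghp:2} one has $\ghp(\mathcal A,\mathcal B)\le 2\cghp(\mathcal A,\mathcal B)$ for compact $\mathcal A,\mathcal B$, so $\{\ghp>2\delta\}\subseteq\{\cghp>\delta\}$; regarding $\pi$ as a coupling of $\mu$ and $\nu$ on $\mmstar\times\mmstar$, this gives $\pi(\{(\mathcal A,\mathcal B):\ghp(\mathcal A,\mathcal B)>2\delta\})\le\delta\le 2\delta$, whence $\prokhorov(\mu,\nu)\le 2\delta$ by the sufficiency direction of Theorem~\ref{thm:strassen}\eqref{thm:Prokhorov:1}.

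For part~\eqref{lem:weak:1}, we may assume $r>1$, since otherwise the right-hand side is at least $1\ge\ghp\ge\prokhorov(\mu,\nu)$ by~\eqref{eq:GHP<1}. Set $\delta:=\prokhorov^c(\mu^{(r)},\nu^{(r)})$ and $\epsilon:=\tfrac1r\vee 2\delta$, and let $\Phi_r:\mmstar\to\mmcstar$, $\Phi_r(\mathcal X):=\pcball{\mathcal X}{r}$, be the measurable projection, so $\mu^{(r)}=(\Phi_r)_*\mu$ and $\nu^{(r)}=(\Phi_r)_*\nu$. By Corollary~\ref{cor:strassen} on $(\mmcstar,\cghp)$ there is a coupling $\pi$ of $\mu^{(r)}$ and $\nu^{(r)}$ with $\pi(\{(\mathcal A,\mathcal B):\cghp(\mathcal A,\mathcal B)>\delta\})\le\delta$. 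The key step is to lift $\pi$ to a coupling of $\mu$ and $\nu$: disintegrating $\mu$ and $\nu$ along $\Phi_r$ (possible since both spaces are Polish and $\Phi_r$ is measurable) yields Borel probability kernels $\mathcal A\mapsto\mu_{\mathcal A}$ and $\mathcal B\mapsto\nu_{\mathcal B}$ with $\mu_{\mathcal A}$ concentrated on $\Phi_r^{-1}(\mathcal A)$, $\nu_{\mathcal B}$ on $\Phi_r^{-1}(\mathcal B)$, and $\mu=\int\mu_{\mathcal A}\,\mu^{(r)}(d\mathcal A)$, $\nu=\int\nu_{\mathcal B}\,\nu^{(r)}(d\mathcal B)$; then $\tilde\pi:=\int(\mu_{\mathcal A}\otimes\nu_{\mathcal B})\,\pi(d\mathcal A\,d\mathcal B)$ is a coupling of $\mu$ and $\nu$ on $\mmstar\times\mmstar$ with $(\Phi_r\times\Phi_r)_*\tilde\pi=\pi$. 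Thus if $(\bs{\mathcal X},\bs{\mathcal Y})\sim\tilde\pi$ then $(\pcball{\bs{\mathcal X}}{r},\pcball{\bs{\mathcal Y}}{r})\sim\pi$, and by Lemma~\ref{lem:ghpVScghp}\eqref{lem:ghpVScghp:1} together with $\epsilon\ge\tfrac1r$ and $\epsilon\ge2\delta$,
\[
\{\ghp(\bs{\mathcal X},\bs{\mathcal Y})>\epsilon\}\subseteq\{2\cghp(\pcball{\bs{\mathcal X}}{r},\pcball{\bs{\mathcal Y}}{r})>\epsilon\}\subseteq\{\cghp(\pcball{\bs{\mathcal X}}{r},\pcball{\bs{\mathcal Y}}{r})>\delta\}.
\]
Hence $\tilde\pi(\{\ghp>\epsilon\})\le\pi(\{\cghp>\delta\})\le\delta\le\epsilon$, and the sufficiency direction of Theorem~\ref{thm:strassen}\eqref{thm:Prokhorov:1} gives $\prokhorov(\mu,\nu)\le\epsilon=\tfrac1r\vee 2\prokhorov^c(\mu^{(r)},\nu^{(r)})$, as claimed.

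The only real obstacle is the lifting step in part~\eqref{lem:weak:1}: one needs a coupling of $\mu$ and $\nu$ compatible with the given coupling of the pushforwards. This is handled by the disintegration/gluing construction above, whose sole technical content is the measurability of the kernels $\mathcal A\mapsto\mu_{\mathcal A}$, $\mathcal B\mapsto\nu_{\mathcal B}$ and of $(\mathcal A,\mathcal B)\mapsto\mu_{\mathcal A}\otimes\nu_{\mathcal B}$, which is standard because $\mmstar$ and $\mmcstar$ are Polish and $\Phi_r$ is measurable. Everything else is a direct application of Strassen's theorem and the inequalities of Lemma~\ref{lem:ghpVScghp}.
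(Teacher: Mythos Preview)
Your proof is correct. For part~\eqref{lem:weak:1} you follow essentially the same line as the paper---Strassen on $(\mmcstar,\cghp)$, transfer via Lemma~\ref{lem:ghpVScghp}\eqref{lem:ghpVScghp:1}, then the converse of Strassen on $(\mmstar,\ghp)$---but you are more careful: the paper simply asserts ``there exists a coupling of $\bs{\mathcal X},\bs{\mathcal Y}$'' satisfying the ball condition, whereas you spell out the disintegration/gluing that lifts the coupling of $\mu^{(r)},\nu^{(r)}$ to a coupling of $\mu,\nu$ with the prescribed $(\Phi_r\times\Phi_r)$-pushforward. That step is genuinely needed and the paper glosses over it, so your version is the more rigorous one.

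For part~\eqref{lem:weak:2} you take a different and cleaner route than the paper. The paper proves~\eqref{lem:weak:2} by reducing to~\eqref{lem:weak:1}: it chooses $r$ large so that $\pcball{\bs{\mathcal X}}{r}=\bs{\mathcal X}$ and $\pcball{\bs{\mathcal Y}}{r}=\bs{\mathcal Y}$ with high probability, bounds $\prokhorov^c(\mu,\mu^{(r)})$ and $\prokhorov^c(\nu,\nu^{(r)})$ via Strassen, applies part~\eqref{lem:weak:1} together with the triangle inequality, and finally lets the error tend to zero. Your argument avoids this limiting procedure entirely: since $\mu,\nu$ are already supported on $\mmcstar$, you apply Strassen directly on $(\mmcstar,\cghp)$, use the pointwise bound $\ghp\le 2\cghp$ from Lemma~\ref{lem:ghpVScghp}\eqref{lem:ghpVScghp:2}, and conclude. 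This is shorter and conceptually parallel to your treatment of~\eqref{lem:weak:1}, at the cost of invoking Lemma~\ref{lem:ghpVScghp}\eqref{lem:ghpVScghp:2} (which the paper's route does not need). Either approach is fine; yours is arguably more natural.
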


\begin{proof}
	\eqref{lem:weak:1}. 
	Let $\epsilon> \frac 1  r \vee 2\prokhorov^c(\mu^{(r)},\nu^{(r)})$. The goal is to prove that $\epsilon\geq \prokhorov(\mu,\nu)$. One can assume $\epsilon< 1< r$ without loss of generality. By Strassen's theorem (Corollary~\ref{cor:strassen}), there exists a coupling of $\bs{\mathcal X,\mathcal Y}$ such that
	\[
	\myprob{\cghp(\pcball{\bs{\mathcal X}}{r},\pcball{\bs{\mathcal Y}}{r})>\frac{\epsilon}{2}}\leq\frac{\epsilon}{2}.
	\]
	So part~\eqref{lem:ghpVScghp:1} of Lemma~\ref{lem:ghpVScghp} and the assumption $\epsilon>\frac 1 r$ give
	\[
	\myprob{\ghp({\bs{\mathcal X}},{\bs{\mathcal Y}})>{\epsilon}}\leq\frac{\epsilon}{2}\leq \epsilon.
	\]
	So the converse of Strassen's theorem (see Theorem~\ref{thm:strassen}) implies that $\prokhorov(\mu,\nu)\leq \epsilon$ and the claim is proved.
	
	\eqref{lem:weak:2}. 
	Let $\epsilon>0$ be arbitrary. One can choose $r>\frac 1{\epsilon}$  large enough  such that
	$\myprob{r>\diam(\bs{\mathcal X})}<\epsilon$. This implies that $\myprob{\pcball{\bs{\mathcal X}}{r}\neq\bs{\mathcal X}}<\epsilon$. Choose $r$ such that the same holds for $\bs{\mathcal Y}$. So the converse of Strassen's theorem implies that $\prokhorov^c(\mu,\mu^{(r)})\vee \prokhorov^c(\nu,\nu^{(r)})\leq \epsilon$. Now, part~\eqref{lem:weak:1} and the triangle inequality give 
	$$\prokhorov(\mu,\nu)\leq \frac 1 r \vee 2(\prokhorov^c(\mu,\nu)+2\epsilon)\leq 2\prokhorov^c(\mu,\nu)+5\epsilon.$$
	Since $\epsilon$ is arbitrary, the claim is proved. 
\end{proof}

The following result relates weak convergence in $\mmstar$ to that in $\mmcstar$.  Below, a number $r>0$ is called a \defstyle{continuity radius of $\mu$} if it is a continuity radius (Definition~\ref{def:continuityRadius}) of $\bs{\mathcal X}$ almost surely. 

\begin{theorem}[Weak Convergence]
	\label{thm:weak}
	Let $\bs{\mathcal X}_1,\bs{\mathcal X}_2,\cdots$ and $\bs{\mathcal X}$ be random \pmm{} spaces with distributions $\mu_1,\mu_2,\ldots$ and $\mu$ respectively. Then the following are equivalent.
	\begin{enumerate}[(i)]
		\item \label{thm:weak:1} $\bs{\mathcal X}_n\Rightarrow \bs{\mathcal X}$ weakly; i.e., $\prokhorov(\mu_n,\mu)\to 0$.
		\item \label{thm:weak:2} For every continuity radius $r$ of $\mu$,  $\pcball{\bs{\mathcal X}}{r}_n\Rightarrow \pcball{\bs{\mathcal X}}{r}$ weakly as random elements of $\mmcstar$; i.e., $\prokhorov^c(\mu_n^{(r)},\mu^{(r)})\to 0$.
		\item \label{thm:weak:3} There exists an unbounded set $I\subseteq \mathbb R^{\geq 0}$ such that $\pcball{\bs{\mathcal X}}{r}_n\Rightarrow \pcball{\bs{\mathcal X}}{r}$ weakly for every $r\in I$.
	\end{enumerate}
\end{theorem}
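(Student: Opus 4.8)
The plan is to establish the cycle \eqref{thm:weak:1}$\Rightarrow$\eqref{thm:weak:2}$\Rightarrow$\eqref{thm:weak:3}$\Rightarrow$\eqref{thm:weak:1}, which mirrors the proof of Theorem~\ref{thm:convergence} but carried out at the level of laws. Throughout I use that $\mmstar$ and $\mmcstar$ are Polish (Theorem~\ref{thm:GHPcomplete} and Subsection~\ref{subsec:GHPcompact}), so weak convergence of random elements is the same as convergence of their laws in the corresponding Prokhorov metric; in particular I identify ``$\bs{\mathcal X}_n\Rightarrow\bs{\mathcal X}$'' with ``$\prokhorov(\mu_n,\mu)\to 0$'', and similarly in $\mmcstar$.

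\textbf{\eqref{thm:weak:1}$\Rightarrow$\eqref{thm:weak:2}.} Fix a continuity radius $r$ of $\mu$ and consider the measurable projection $g\colon\mmstar\to\mmcstar$, $g(\mathcal X):=\pcball{\mathcal X}{r}$. The crux is to locate the discontinuity set $D_g$ of $g$: by the implication \eqref{thm:convergence:1}$\Rightarrow$\eqref{thm:convergence:4} of Theorem~\ref{thm:convergence}, whenever $r$ is a continuity radius of $\mathcal X$ the map $g$ is continuous at $\mathcal X$ (if $\mathcal X_n\to\mathcal X$ in $\ghp$ then $\pcball{\mathcal X}{r}_n\to\pcball{\mathcal X}{r}$ in $\cghp$, and $\mmstar$ being metric, sequential continuity suffices). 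Hence $D_g$ is contained in the set of spaces for which $r$ is a discontinuity radius, so $\myprob{\bs{\mathcal X}\in D_g}\leq\myprob{r\text{ is a discontinuity radius of }\bs{\mathcal X}}=0$. The mapping theorem for weak convergence (e.g.\ \cite{bookBi99}) then yields $g(\bs{\mathcal X}_n)\Rightarrow g(\bs{\mathcal X})$, which is \eqref{thm:weak:2}. (Alternatively, by the Skorokhod representation theorem one may realize $\bs{\mathcal X}_n$ and $\bs{\mathcal X}$ on a common probability space with $\bs{\mathcal X}_n\to\bs{\mathcal X}$ a.s.\ in $\ghp$, and apply Theorem~\ref{thm:convergence} pathwise.)

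\textbf{\eqref{thm:weak:2}$\Rightarrow$\eqref{thm:weak:3} and \eqref{thm:weak:3}$\Rightarrow$\eqref{thm:weak:1}.} For the first, it suffices to exhibit an unbounded set of continuity radii of $\mu$. Let $D$ be the random set of discontinuity radii of $\bs{\mathcal X}$, which is countable almost surely by Lemma~\ref{lem:continuityRadius}; using Lemma~\ref{lem:cadlag2} the indicator $(r,\omega)\mapsto\indic{r\in D}$ is jointly measurable, so Tonelli's theorem gives
\[
\int_0^\infty\myprob{r\in D}\,dr=\omid{\lambda(D)}=0,
\]
where $\lambda$ denotes Lebesgue measure. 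Thus Lebesgue-a.e.\ $r>0$ is a continuity radius of $\mu$; in particular the set $I$ of continuity radii of $\mu$ is unbounded, and applying \eqref{thm:weak:2} to each $r\in I$ gives \eqref{thm:weak:3}. For the second, given $\epsilon>0$ choose $r\in I$ with $r>1/\epsilon$; since $\prokhorov^c(\mu_n^{(r)},\mu^{(r)})\to 0$, part~\eqref{lem:weak:1} of Lemma~\ref{lem:weak} gives $\prokhorov(\mu_n,\mu)\leq\frac1r\vee 2\prokhorov^c(\mu_n^{(r)},\mu^{(r)})\leq\epsilon$ for all large $n$, hence $\prokhorov(\mu_n,\mu)\to 0$, i.e.\ \eqref{thm:weak:1}.

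The main obstacle is the first implication: one must recognize that continuity radii of $\mu$ are almost surely continuity points of the projection $\mathcal X\mapsto\pcball{\mathcal X}{r}$, which is exactly where Theorem~\ref{thm:convergence}\eqref{thm:convergence:4} is used; after that, the mapping theorem does the work. The joint-measurability needed for the Tonelli computation in the second implication is a routine technical point, handled via the c\`adl\`ag property of Lemma~\ref{lem:cadlag2}, and the last implication is immediate from Lemma~\ref{lem:weak}.
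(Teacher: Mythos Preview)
Your argument is correct. The steps \eqref{thm:weak:2}$\Rightarrow$\eqref{thm:weak:3} and \eqref{thm:weak:3}$\Rightarrow$\eqref{thm:weak:1} coincide with the paper's (Fubini/Tonelli on the set of discontinuity radii, and Lemma~\ref{lem:weak}\eqref{lem:weak:1}, respectively).

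Where you differ is in \eqref{thm:weak:1}$\Rightarrow$\eqref{thm:weak:2}. You invoke the continuous mapping theorem: by Theorem~\ref{thm:convergence}\,\eqref{thm:convergence:1}$\Rightarrow$\eqref{thm:convergence:4}, the projection $g:\mathcal X\mapsto\pcball{\mathcal X}{r}$ is continuous at every $\mathcal X$ for which $r$ is a continuity radius, so $\mu(D_g)=0$ and $g(\bs{\mathcal X}_n)\Rightarrow g(\bs{\mathcal X})$. The paper instead works by hand: it applies Strassen's theorem (Corollary~\ref{cor:strassen}) to $\prokhorov(\mu_n,\mu)<\delta/2$ to obtain a coupling with $\myprob{\ghp(\bs{\mathcal X}_n,\bs{\mathcal X})>\delta/2}\leq\delta/2$, then reruns the estimate behind \eqref{thm:convergence:2}$\Rightarrow$\eqref{thm:convergence:4} of Theorem~\ref{thm:convergence} (via Lemma~\ref{lem:GHP-monotone} and the $\hp$-continuity of balls at a continuity radius) to control $\cghp(\pcball{\bs{\mathcal X}}{r}_n,\pcball{\bs{\mathcal X}}{r})$ on that event, and closes with the converse direction of Strassen to bound $\prokhorov^c(\mu_n^{(r)},\mu^{(r)})$. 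Your route is shorter and more conceptual, packaging the same pathwise fact from Theorem~\ref{thm:convergence} into a single appeal to the mapping theorem (or, equivalently, Skorokhod representation). The paper's route has the advantage of giving an explicit quantitative bound on $\prokhorov^c(\mu_n^{(r)},\mu^{(r)})$ in terms of $\prokhorov(\mu_n,\mu)$ and the modulus of continuity of the ball map at $r$, in keeping with its emphasis on quantitative control via Strassen-type results.
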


\begin{proof}
	\eqref{thm:weak:1}$\Rightarrow$\eqref{thm:weak:2}. Let $r$ be a continuity radius of $\mu$. 
	Therefore, as $\delta\to 0$, $\hp(\pcball{\bs{\mathcal X}}{r+\delta},\pcball{\bs{\mathcal X}}{r-\delta})\to 0$ a.s. (see~\eqref{eq:HP}). So, by fixing $\epsilon>0$ arbitrarily, the following holds for small enough $\delta$. %
	\[
	\myprob{\hp(\pcball{\bs{\mathcal X}}{r+\delta},\pcball{\bs{\mathcal X}}{r-\delta})>\epsilon}<\epsilon.
	\]
	Assume that $0<\delta<r \wedge \frac 1r$.
	The assumption of~\eqref{thm:weak:1} implies that for large enough $n$,  $\prokhorov(\mu_n,\mu)<\frac{\delta}{2}$. Fix such $n$. By Strassen's theorem (Corollary~\ref{cor:strassen}), there exists a coupling of $\bs{\mathcal X}_n$ and $\bs{\mathcal X}$ such that $\myprob{\ghp(\bs{\mathcal X}_n,\bs{\mathcal X})>\frac{\delta}{2}}\leq\frac{\delta}{2}$. Similarly to the proof of \eqref{thm:convergence:2}$\Rightarrow$\eqref{thm:convergence:4} of Theorem~\ref{thm:convergence}, by using Lemma~\ref{lem:GHP-monotone} and the above inequality, one can deduce that 
	\[
	\myprob{\cghp(\pcball{\bs{\mathcal X}}{r}_n, \pcball{\bs{\mathcal X}}{r})>\epsilon+\frac{\delta}{2}}<\epsilon+\frac{\delta}{2}.
	\]
	Now, the converse of Strassen's theorem shows that $\prokhorov^c(\mu_n^{(r)}, \mu^{(r)})\leq \epsilon+\frac{\delta}{2}$. Since the RHS is arbitrarily small, the claim is proved.
	
	\eqref{thm:weak:2}$\Rightarrow$\eqref{thm:weak:3}. 
	By Lemma~\ref{lem:continuityRadius} and Fubini's theorem, one can show that the set of discontinuity radii of $\mu$ has zero Lebesgue measure. This implies the claim.
	
	\eqref{thm:weak:3}$\Rightarrow$\eqref{thm:weak:1}. 
	The claim is implied by part~\eqref{lem:weak:1} of Lemma~\ref{lem:weak} and is left to the reader.
\end{proof}

\begin{remark}
	\unwritten{\later: Delete this?}
	Part~\eqref{thm:weak:2} of Theorem~\ref{thm:weak} is similar to the convergence of finite dimensional distributions in stochastic processes (note that one can identify a random \pmm{} space $\bs{\mathcal X}$ with the stochastic process $t\mapsto \pcball{\bs{\mathcal X}}{t}$ in $\mmcstar$), but a stronger result holds: Convergence of one-dimensional marginal distributions, only for the set of continuity radii, is enough for the convergence of the whole process in this case. This is due to the monotonicity in Lemma~\ref{lem:a}. See also Subsection~\ref{subsec:skorokhod} below.
\end{remark}

\section{Special Cases and Connections to Other Notions}
\label{sec:special}

This section discusses some notions in the literature which are special cases of, or connected to, the Gromov-Hausdorff-Prokhorov metric defined in this paper.

\subsection{A Metrization of the Gromov-Hausdorff Convergence}
\label{subsec:GH}

Here, it is shown that the setting of Section~\ref{sec:ghp} can be used to extend the Gromov-Hausdorff metric to the boundedly-compact case. Also, it is shown that this gives a metrization of the \textit{Gromov-Hausdorff topology} on the set $\mstar$ of boundedly-compact pointed metric spaces.  
In addition, it is shown that $\mstar$ is a Polish space, which enables one to define \textit{random boundedly-compact pointed metric spaces} %
(see Subsections~\ref{subsec:length} and~\ref{subsec:discrete} below for metrics on  specific subsets of $\mstar$).

First, the Gromov-Hausdorff metric is recalled in the compact case (see~\cite{Gr81} or~\cite{bookBBI}). The original definition~\eqref{eq:GH} is for non-pointed spaces, but we recall the pointed version since it will be used later.
Let $\mathcal X=(X,o_X)$ and $\mathcal Y=(Y,o_Y	)$ be compact pointed metric spaces. The \defstyle{Gromov-Hausdorff distance} $\cgh(\mathcal X,\mathcal Y)$ of $\mathcal X$ and $\mathcal Y$ is defined similar to the metric $\cghp$ of Subsection~\ref{subsec:GHPcompact} by deleting the last term in~\eqref{eq:GHPcompact}; or equivalently, by letting $\mu_X$ and $\mu_Y$ be the zero measures in~\eqref{eq:GHPcompact}. It is known that $\cgh$ is a metric on $\mcstar$ and makes it a complete separable metric space (see e.g., \cite{bookBBI}). 

In the boundedly-compact case, the notion of \defstyle{Gromov-Hausdorff convergence} is also defined (see~\cite{Gr81} or~\cite{bookBBI}), which can be stated using~\eqref{eq:a_r} as follows. 
Let $\mathcal X_n=(X_n,o_n)$ be boundedly-compact \pms{} spaces ($n=1,2,\ldots$). The sequence $(\mathcal X_n)_n$ is said to converge to $\mathcal X=(X,o)$ in the Gromov-Hausdorff sense (Definition~8.1.1 of~\cite{bookBBI}) if for every $r>0$ and $0<\epsilon\leq r$, on has $\lim_n a(\epsilon,r;X_n,X) = 0$ (consider the zero measures in~\eqref{eq:a_r}). This defines a topology on $\mstar$. 

The metric $\cgh$ is identical to the restriction of the Gromov-Hausdorff-Prokhorov metric $\cghp$ to $\mcstar$ (by identifying $\mcstar$ with the subset $\{(X,o,\mu)\in\mmstar: \mu=0 \}$ of $\mmstar$). %
Now, define the \defstyle{Gromov-Hausdorff metric} $\gh$ on $\mstar$ to be the restriction of the metric $\ghp$~\eqref{eq:GHP} to $\mstar$. It can also be defined directly by~\eqref{eq:a_r} and~\eqref{eq:GHP} by letting the measures be the zero measures. 
Similarly to Theorem~\ref{thm:metrization}, we have

\begin{theorem}
	\label{thm:metrizationGH}
	The metric $\gh$ on $\mstar$, defined above,
	is a metrization of the Gromov-Hausdorff topology. Moreover, it makes $\mstar$ a complete separable metric space.
\end{theorem}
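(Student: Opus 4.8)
The plan is to reduce the statement entirely to results already proved in the measured setting of Section~\ref{sec:ghp}. As noted in the text, $\mstar$ is identified with the subset $S:=\{(X,o,\mu)\in\mmstar:\mu=0\}$ of $\mmstar$, and under this identification $\gh$ is the restriction of $\ghp$ to $S$ while $\cgh$ is the restriction of $\cghp$ to the compact members of $S$; this is legitimate because a GHP-isometry between two zero-measure \pmm{} spaces is precisely a pointed isometry, so the equivalence classes coincide. Since a restriction of a metric is again a metric, Theorem~\ref{thm:GHP-Metric} already gives that $\gh$ is a metric on $\mstar$. It then remains to show (a) that the topology of $\gh$ is the Gromov-Hausdorff topology and (b) that $(\mstar,\gh)$ is complete and separable.

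For (a) I would argue at the level of sequences: the Gromov-Hausdorff topology is generated by Gromov-Hausdorff convergence (Definition~8.1.1 of~\cite{bookBBI}, restated in the excerpt through the function $a$ with all measures taken to be zero), and a metric topology is sequential and determined by its convergent sequences, so it suffices to check that, for $\mathcal X,\mathcal X_n\in\mstar$, one has $\gh(\mathcal X_n,\mathcal X)\to 0$ if and only if $\lim_n a(\epsilon,r;\mathcal X_n,\mathcal X)=0$ for every $r>0$ and $\epsilon\in(0,r]$. This is exactly the equivalence of parts~\eqref{thm:convergence:1} and~\eqref{thm:convergence:2} of Theorem~\ref{thm:convergence}, read with $\cghp=\cgh$ on zero-measure spaces: part~\eqref{thm:convergence:2} says precisely that for every $r,\epsilon>0$ one has $a(\epsilon,r;\mathcal X_n,\mathcal X)<\epsilon$ for all large $n$, and one passes between "for every $\epsilon$" and "$\to 0$" using the monotonicity of $a$ in $\epsilon$ from Lemma~\ref{lem:a}. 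Hence the two topologies coincide, which is the first assertion.

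For (b) I would show that $S$ is a closed subset of $(\mmstar,\ghp)$; then Theorem~\ref{thm:GHPcomplete}, together with the fact that a closed subspace of a complete separable metric space is complete and separable, finishes the proof. Closedness of $S$ reduces, via part~\eqref{thm:convergence:5} of Theorem~\ref{thm:convergence} and Lemma~\ref{lem:continuityRadius} (which gives arbitrarily large continuity radii, since the discontinuity radii are countable), to showing that the set of zero-measure spaces is closed in $(\mmcstar,\cghp)$: if $\mathcal X_n\to\mathcal X$ in $\cghp$ with every $\mu_{X_n}=0$, then $\mu_X=0$. This last point I would extract from Theorem~\ref{thm:GHPcompact}\eqref{thm:GHPcompact:1}: any $\alpha$ witnessing $\cghp(\mathcal X_n,\mathcal X)\le\epsilon$ satisfies $D(\alpha;0,\mu_X)\le\epsilon$, whereas, writing $t:=||\pi_{1*}\alpha||=||\pi_{2*}\alpha||$, we have $D(\alpha;0,\mu_X)=t+\tv{\pi_{2*}\alpha}{\mu_X}\ge t+\big|\,t-||\mu_X||\,\big|\ge ||\mu_X||$, so that $||\mu_X||\le\cghp(\mathcal X_n,\mathcal X)\to 0$.

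The whole argument is essentially bookkeeping on top of Section~\ref{sec:ghp}; the only step requiring a genuine (though very short) computation is the closedness of the zero-measure locus, i.e., the estimate in the previous paragraph showing that total mass cannot drop to $0$ in a $\cghp$-limit. I would also remark that (b) can alternatively be obtained by copying the proofs of Theorems~\ref{thm:GHP-precompactness} and~\ref{thm:GHPcomplete} verbatim with all measures set to zero, but the closed-subspace route is shorter and avoids repetition.
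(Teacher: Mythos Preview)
Your proposal is correct and follows essentially the same route as the paper: the paper's proof just cites Theorem~\ref{thm:convergence} for the metrization claim and then says ``It can be seen that $\mstar$ is a closed subset of $\mmstar$'' and invokes Theorem~\ref{thm:GHPcomplete}. You simply flesh out both steps---identifying precisely which implication of Theorem~\ref{thm:convergence} is needed and supplying a clean estimate (via Theorem~\ref{thm:GHPcompact}) for the closedness of the zero-measure locus---so there is no substantive difference in strategy.
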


\begin{proof}
	\iffalse
	Let $\mathcal X=(X,o)$ and $\mathcal X_n=(X_n,o_n)$ be boundedly-compact \pms{} spaces ($n=1,2,\ldots$). By equipping them with the zero measure, one can regard $\mathcal X'_n$ and $\mathcal X$ as \pmm{} spaces. Theorem~\ref{thm:convergence} implies that $\mathcal X'_n\to \mathcal X$ in the Gromov-Hausdorff sense (defined above) if and only if $\ghp(\mathcal X_n, \mathcal X)\to 0$. This proves the first claim of the theorem.
	\fi
	The first claim is implied by Theorem~\ref{thm:convergence}.
	It can be seen that $\mstar$ is a closed subset of $\mmstar$. Therefore, Theorem~\ref{thm:GHPcomplete} implies that $\mstar$ is a complete separable metric space.
\end{proof}

In addition, a version of Theorem~\ref{thm:weak} holds for weak convergence of random boundedly-compact pointed metric spaces.

\subsection{Length Spaces}
\label{subsec:length}

In~\cite{AbDeHo13}, another version of the Gromov-Hausdorff-Prokhorov distance is defined in the case of \textit{length spaces}. It is shown below that it generates the same topology as (the restriction of) the metric $\ghp$.

A metric space $X$ is called a \defstyle{length space} if for all pairs $x,y\in X$, the distance of $x$ and $y$ is equal to the infimum length of the curves connecting $x$ to $y$. Let $\mathcal L$ be the set of (isometry classes of) pointed measured complete locally-compact length spaces (equipped with locally-finite Borel measures). For two elements $\mathcal X, \mathcal Y\in \mathcal L$, their distance is defined in~\cite{AbDeHo13} by the same formula as~\eqref{eq:lengthSpaceGHP}.
It is proved in~\cite{AbDeHo13} that this makes $\mathcal L$ a complete separable metric space.

Every element of $\mathcal L$ is boundedly-compact by Hopf-Rinow's theorem (see~\cite{AbDeHo13}). So $\mathcal L$ can be regarded as a subset of $\mmstar$. Now, consider the restriction of the metric $\ghp$ to $\mathcal L$. This metric is not equivalent to the metric in~\eqref{eq:lengthSpaceGHP}, but generates the same topology (by Theorem~\ref{thm:convergence}). Moreover, $\mathcal L$ is a closed subset of $\mmstar$ (see Theorem~8.1.9 of~\cite{bookBBI}). So Theorem~\ref{thm:GHPcomplete} implies that $\mathcal L$ is also complete and separable under the restriction of the metric $\ghp$.

In addition, the pre-compactness result Theorem~\ref{thm:GHP-precompactness} is a generalization of Theorem~2.11 of~\cite{AbDeHo13}.

\subsection{Random Measures}
\label{subsec:randomMeasure}
Let $S$ be a boundedly-compact metric space and $\mathcal M$ be the set of boundedly-finite Borel measures on $S$. 
The well known \textit{vague topology} on $\mathcal M$, makes it a Polish space
(see e.g., Lemma~4.6 in~\cite{bookKa17randommeasures}). 
This is the basis for having a standard probability space in defining \defstyle{random measures on $S$} as random elements in $\mathcal M$. The metrics defined in Remark~\ref{rem:noncompactHausdorff} are metrizations of the vague topology as well.

One can regard a random measure on $S$ as a random \pmm{} space by considering the natural map $\mu\mapsto (S,o,\mu)$ from $\mathcal M$ to $\mmstar$. The cost is considering measures on $S$ up to equivalence under automorphisms of $(S,o)$ (see also the next paragraph). 
This also allows the base space $(S,o)$ be random, and hence, 
a random \pmm{} space can also be called a \defstyle{random measure on a random environment}. %

To rule out the issue of the automorphisms in the above discussion, on can add marks to the points of $S$, which requires a generalization of the Gromov-Hausdorff-Prokhorov metric. See~\cite{Kh19generalization}.

\subsection{Benjamini-Schramm Metric For Graphs}
\label{subsec:graphs}

Benjamini and Schramm~\cite{BeSc01} defined a notion of convergence for rooted graphs, which is particularly interesting for studying the limit of a sequence of sparse graphs. 
For simple graphs, convergence under this metric is equivalent to the Gromov-Hausdorff convergence of the corresponding vertex sets equipped with the graph-distance metrics. %
Below, it is shown that, roughly speaking, the boundedly-compact case of the Gromov-Hausdorff metric defined in this paper generalizes the Benjamini-Schramm metric for simple graphs.  %
So random rooted graphs  can be regarded as random pointed metric  spaces. 

For simplicity, we restrict attention to simple graphs.	 It is also assumed that the graph is connected and \textit{locally-finite}; i.e., every vertex has finite degree. For two rooted networks $(G_1,o_1)$ and $(G_2,o_2)$, their distance is defined by $1/(\alpha+1)$, where $\alpha$ is the supremum of those $r>0$ such that there is a graph-isomorphism between $\cball{r}{o_1}$ and $\cball{r}{o_2}$ that maps $o_1$ to $o_2$. Let $\mathcal G_*$ be the set of isomorphism-classes of rooted graphs. It is claimed in~\cite{processes} that this distance function makes $\mathcal G_*$ a complete separable metric space.

Since we assume the graphs are simple, every graph $G$ can be modeled as a metric space, where the metric (which is the graph-distance metric) is integer-valued. Also, being locally-finite implies that the metric space is boundedly-compact.
So $\mathcal G_*$ can be identified with a subset of $\mstar$. It can be seen that the restriction of the Gromov-Hausdorff metric on $\mstar$ (defined in Subsection~\ref{subsec:GH}) to $\mathcal G_*$ is  equivalent to the metric defined in~\cite{processes} mentioned above.

\subsection{Discrete Spaces}
\label{subsec:discrete}

Let $\mathcal D_*$ be the set of all pointed discrete metric spaces (up to pointed isometries) which are boundedly-finite; i.e., every closed ball contains finitely many points. To study random pointed discrete spaces, \cite{dimensionI} defines a metric on $\mathcal D_*$ and shows that $\mathcal D_*$ is a Borel subset of some complete separable metric space. 
It is shown below that random pointed discrete spaces are special cases of random \pmm{} spaces (or random \pms{} spaces). 

First, $\mathcal D_*$ is clearly a subset of $\mstar$. Therefore, the generalization of the Gromov-Hausdorff metric on $\mstar$ (introduced in Subsection~\ref{subsec:GH}) induces a metric on $\mathcal D_*$ (the topology of this metric is discussed below). It should be noted that $\mathcal D_*$ is not a closed subset of $\mstar$, and hence, is not complete (in fact, $\mathcal D_*$ is dense in $\mstar$). However, it is a Borel subset of $\mstar$. %

Second, by equipping every discrete set $X$ with the counting measure on $X$, $\mathcal D_*$ can be regarded as a subset of $\mmstar$. It can be seen that it is a Borel subset which is not closed (e.g., $\{0,\frac{1}{n}\}$ converges to a single point whose measure is 2). The closure of $\mathcal D_*$ in $\mmstar$ is the set of 
elements of $\mmstar$ in which the underlying metric space is discrete and the measure is integer-valued and has full support.

By Theorem~\ref{thm:convergence}, it can be seen that the topology on $\mathcal D_*$ induced from $\mmstar$ coincides with the topology defined in~\cite{dimensionI}. However, it is strictly finer than the topology induced from $\mstar$. Nevertheless, it can be seen that these topologies induce the same Borel sigma-field on $\mathcal D_*$.

\subsection{The Gromov-Hausdorff-Vague Topology}

In~\cite{AtLoWi16}, a variant of the GHP metric is defined on the set $\mmstar'$ of boundedly-compact \textit{metric measure spaces} and its Polishness is proved. This space is slightly different from $\mmstar$ since in the former, the features outside the support of the underlying measure are discarded (see~\cite{bookVi10} for more discussion on the two different viewpoints). More precisely, two pointed metric measure spaces $(X,o_X,\mu_X)$ and $(Y,o_Y,\mu_Y)$ are called equivalent in~\cite{AtLoWi16} if there exists a measure preserving isometry between $\supp (\mu_X)\cup\{o_X\}$ and $\supp(\mu_Y)\cup\{o_Y\}$ that maps $o_X$ to $o_Y$. %
The set $\mmstar'$ can be mapped naturally into $\mmstar$ (by replacing $X$ with $\supp(\mu_X)\cup\{o_X\}$). The image of this map is the set of $(X,o,\mu)$ in $\mmstar$ such that $\supp(\mu)\supseteq X\setminus\{o\}$. Since the image of this map is not closed in $\mmstar$, the set $\mmstar'$ is not complete under the metric induced by the GHP metric (this holds even in the compact case). In~\cite{AtLoWi16}, another metric is defined that  makes $\mmstar'$ complete and separable. %
It can be seen that it generates the same topology as the restriction of the GHP metric to $\mmstar'$. %
A second proof for Polishness of $\mmstar'$ can be given by Alexandrov's theorem by using Polishness of $\mmstar$ and by showing that $\mmstar'$ corresponds to a $G_{\delta}$ subspace of $\mmstar$ (given $n>0$, it can be shown that the set of $(X,o,\mu)\in \mmstar$ such that $\forall x\in X:\mu(B_{1/n}(x))>0$ is open).

The method of~\cite{AtLoWi16} is different from the present paper. It defines the metric on $\mmstar'$ by modifying \eqref{eq:lengthSpaceGHP} (since \eqref{eq:lengthSpaceGHP} does not make $\mmstar'$ complete), but the definition in the present paper is based on the notion of \pmm{}-subspaces, Lemma~\ref{lem:GHP-monotone} and~\eqref{eq:GHP}. As mentioned in Remark~\ref{rem:variant}, this method gives more quantitative bounds in the arguments. Despite some similarities in the arguments (which are also similar to those of~\cite{AbDeHo13} and other literature that use the localization method to generalize the Gromov-Hausdorff metric), the results of~\cite{AtLoWi16} do not give a metrization of the Gromov-Hausdorff-Prokhorov topology on $\mmstar$ and do not imply its Polishness. 
Also, the Strassen-type theorems (Theorems~\ref{thm:strassen} and~\ref{thm:GHPcompact}) and the results based on them are new in the present paper.

The term \textit{Gromov-Hausdorff-vague topology} is used in~\cite{AtLoWi16} to distinguish it with another notion called \textit{the Gromov-Hausdorff-weak topology} defined therein. By considering only probability measures in the above discussion, the two topologies on the corresponding subset of $\mmstar'$ will be identical.

\subsection{The Skorokhod Space of C\`adl\`ag Functions}
\label{subsec:skorokhod}

The Skorokhod space, recalled below, is the space of c\`adl\`ag functions with values in a given metric space. By noting that every boundedly-compact \pmm{} space can be represented as a c\`adl\`ag curve in $\mmcstar$ (see the following lemma), one can consider the Skorokhod metric on $\mmstar$. This subsection studies the relations of this metric with the metric $\ghp$. %
By similar arguments, one can also study the connections of the Skorokhod space to the boundedly-compact cases in \cite{AbDeHo13}, \cite{processes}, \cite{AtLoWi16}, \cite{bookBBI} and \cite{bookVi10}, which are introduced earlier in this section.
\begin{lemma}
	\label{lem:cadlag2}
	For every boundedly-compact \pmm{} space $\mathcal X$, the curve $t\mapsto \pcball{\mathcal X}{t}$ is a c\`adl\`ag function with values in $\mmcstar$. Moreover, the left limit of this curve at $t=r$ is $(\overline{\oball{r}{o}},o,\restrict{\mu}{\oball{r}{o}})$, where $\overline{\oball{r}{o}}$ is the closure of $\oball{r}{o}$.
\end{lemma}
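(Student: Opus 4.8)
The plan is to reduce the statement to Lemma~\ref{lem:cadlag} by observing that the relevant balls, together with the candidate left limit, all sit inside a common compact \pmm{} space, so that the elementary inequality~\eqref{eq:hp-ineq} converts $\cghp$-distances into the $\hp$-distances that Lemma~\ref{lem:cadlag} already controls. First I would record that the curve does take values in $\mmcstar$: since $\mathcal X=(X,o,\mu)$ is boundedly compact, each $\cball{t}{o}$ is compact and $\restrict{\mu}{\cball{t}{o}}$ is finite, so $\pcball{\mathcal X}{t}\in\mmcstar$; similarly $\overline{\oball{r}{o}}$ is a closed subset of the compact ball $\cball{r}{o}$, hence compact, and $\restrict{\mu}{\oball{r}{o}}$ is finite, so $\mathcal Y_r:=(\overline{\oball{r}{o}},o,\restrict{\mu}{\oball{r}{o}})\in\mmcstar$.

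For right continuity at $r$: when $t>r$ one has $\cball{r}{o}\subseteq\cball{t}{o}$ and $\restrict{\mu}{\cball{r}{o}}\le\restrict{\mu}{\cball{t}{o}}$, so both $\pcball{\mathcal X}{r}$ and $\pcball{\mathcal X}{t}$ are \pmm-subspaces of the compact space $\pcball{\mathcal X}{t}$, and by~\eqref{eq:hp-ineq} and~\eqref{eq:HP},
\[
\cghp(\pcball{\mathcal X}{r},\pcball{\mathcal X}{t})\le \hausdorff(\cball{r}{o},\cball{t}{o})\vee \prokhorov(\restrict{\mu}{\cball{r}{o}},\restrict{\mu}{\cball{t}{o}}),
\]
which tends to $0$ as $t\downarrow r$ by Lemma~\ref{lem:cadlag}. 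For the left limit at $r>0$: any point at distance $t<r$ from $o$ lies in $\oball{r}{o}$, hence $\cball{t}{o}\subseteq\overline{\oball{r}{o}}$ and $\restrict{\mu}{\cball{t}{o}}\le\restrict{\mu}{\oball{r}{o}}$, so $\pcball{\mathcal X}{t}$ and $\mathcal Y_r$ are \pmm-subspaces of $\mathcal Y_r$, and
\[
\cghp(\pcball{\mathcal X}{t},\mathcal Y_r)\le \hausdorff(\cball{t}{o},\overline{\oball{r}{o}})\vee \prokhorov(\restrict{\mu}{\cball{t}{o}},\restrict{\mu}{\oball{r}{o}}),
\]
which, by the part of Lemma~\ref{lem:cadlag} identifying the left limits of $t\mapsto\cball{t}{o}$ and $t\mapsto\restrict{\mu}{\cball{t}{o}}$ at $t=r$ with $\overline{\oball{r}{o}}$ and $\restrict{\mu}{\oball{r}{o}}$, tends to $0$ as $t\uparrow r$. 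Hence $t\mapsto\pcball{\mathcal X}{t}$ is right-continuous with left limits, and its left limit at $r$ equals $\mathcal Y_r$, as claimed.

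I do not anticipate a real obstacle here: the whole argument is the reduction through~\eqref{eq:hp-ineq}, and the only points needing a line of care are the compactness/finiteness checks placing the objects in $\mmcstar$ and the inclusion $\cball{t}{o}\subseteq\overline{\oball{r}{o}}$ for $t<r$ that supplies the common ambient space; right continuity at $t=0$ is just the degenerate case $r=0$ of the first display.
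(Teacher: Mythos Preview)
Your proof is correct and follows exactly the approach the paper intends: the paper's own proof is the single line ``The claim follows from Lemma~\ref{lem:cadlag} and~\eqref{eq:hp-ineq},'' and your argument is precisely the natural unpacking of that sentence via the common-ambient-space observation and the bound $\cghp\leq\hp$.
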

\begin{proof}
	The claim follows from Lemma~\ref{lem:cadlag} and~\eqref{eq:hp-ineq}.
\end{proof}

Let $S$ be a complete separable metric space. The \defstyle{Skorokhod space} $\mathcal D(S)$ is the space of all c\`adl\`ag functions $f:[0,\infty)\to S$. %
In~\cite{bookBi99}, a metric is defined on $\mathcal D(S)$ which is called the \defstyle{Skorokhod metric} here. %
Heuristically, two c\`adl\`ag functions $x_1,x_2\in\mathcal D(S)$ are close if by restricting $x_1$ to a large interval $[0,M]$ and by perturbing the time a little (i.e., by composing $x_1$ with a function which is close to the identity function), the resulting function is close in the sup metric to the restriction of $x_2$ to a large interval. The precise definition is skipped for brevity (see Section~16 of~\cite{bookBi99}). Under this metric, $\mathcal D(S)$ is a complete separable metric space.

Now let $S:=\mmcstar$. 
For every boundedly-compact \pmm{} space $\mathcal X$, let $\rho(\mathcal X)$ denote the curve $t\mapsto \pcball{\mathcal X}{t}$ with values in $\mmcstar$. By Lemma~\ref{lem:cadlag2}, the latter is c\`adl\`ag; i.e., is an element of $\mathcal D(S)$. Now, $\rho$ defines a function from $\mmstar$ to $\mathcal D(S)$. It can be seen that $\rho$ is injective and its image is
\[
\left\{x\in \mathcal D(S): \forall r\leq s: \pcball{x(s)}{r}=x(r) \right\}.
\]
It can also be seen that the latter is a closed subset of $\mathcal D(S)$. Therefore, the Skorokhod metric can be pulled back by $\rho$ to make $\mmstar$ a complete separable metric space.

\begin{proposition}
	\label{prop:skorokhod}
	One has
	\begin{enumerate}[(i)]
		\item \label{prop:skorokhod:top} The topology on $\mmstar$ induced by the Skorokhod metric (defined above) is strictly finer than the Gromov-Hausdorff-Prokhorov topology.
		\item \label{prop:skorokhod:borel} The Borel sigma-field of the Skorokhod metric on $\mmstar$ is identical with that of the Gromov-Hausdorff-Prokhorov metric.
	\end{enumerate}
\end{proposition}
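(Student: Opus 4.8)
The plan is to read both assertions off Theorem~\ref{thm:convergence} together with standard facts about the Skorokhod space $\mathcal D(\mmcstar)$ of c\`adl\`ag functions. Write $\pi_t\colon\mathcal D(\mmcstar)\to\mmcstar$, $\pi_t(x):=x(t)$, for the coordinate maps, and let $\mathcal{B}_{\ghp}$ and $\mathcal{B}_{\mathrm{Sk}}$ denote the Borel $\sigma$-fields of $\mmstar$ for the GHP metric and for the Skorokhod metric respectively. Recall from Lemma~\ref{lem:cadlag2} that $\rho(\mathcal X)$ is continuous at $t=r$ exactly when $r$ is a continuity radius of $\mathcal X$ (Definition~\ref{def:continuityRadius}), and that $\rho$ is an isometric bijection of $\mmstar$ (with the Skorokhod metric) onto the closed, hence Borel, subset $\rho(\mmstar)\subseteq\mathcal D(\mmcstar)$, in particular a homeomorphism onto its image.

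\textbf{Part~\eqref{prop:skorokhod:top}.} First I would check that the Skorokhod topology refines the Gromov--Hausdorff--Prokhorov one. Since both are metrizable it suffices to show that $\rho(\mathcal X_n)\to\rho(\mathcal X)$ in $\mathcal D(\mmcstar)$ implies $\mathcal X_n\to\mathcal X$ in $\ghp$. By the standard property that Skorokhod convergence forces $x_n(t)\to x(t)$ at every continuity point $t$ of the limit $x$ (see~\cite{bookBi99}), we obtain $\pcball{\mathcal X}{r}_n\to\pcball{\mathcal X}{r}$ in $\cghp$ at every continuity radius $r$ of $\mathcal X$; as $\mathcal X$ has only countably many discontinuity radii (Lemma~\ref{lem:continuityRadius}), part~\eqref{thm:convergence:4} of Theorem~\ref{thm:convergence} yields $\mathcal X_n\to\mathcal X$ in $\ghp$. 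For strictness I would produce a sequence converging in $\ghp$ but with no Skorokhod limit, using the classical obstruction that two jumps coalescing into one breaks Skorokhod convergence. Take the three-point pointed spaces $\mathcal X_n$ on $\{o,a_n,b_n\}$ with $d(o,a_n)=1$, $d(o,b_n)=1+\frac1n$, $d(a_n,b_n)=2$, and $\mathcal X$ on $\{o,a,b\}$ with $d(o,a)=d(o,b)=1$, $d(a,b)=2$, all carrying the zero measure so that they lie in $\mmstar$. For every $r>1$ and all large $n$ one has $\pcball{\mathcal X}{r}_n=\mathcal X_n$, $\pcball{\mathcal X}{r}=\mathcal X$, and the correspondence $o\leftrightarrow o$, $a_n\leftrightarrow a$, $b_n\leftrightarrow b$ has distortion $\frac1n$, so $\cghp(\mathcal X_n,\mathcal X)\le\frac{1}{2n}$, whence $\mathcal X_n\to\mathcal X$ in $\ghp$ by part~\eqref{thm:convergence:5} of Theorem~\ref{thm:convergence}. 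On the other hand $\rho(\mathcal X)$ has a single jump, at $r=1$, while $\rho(\mathcal X_n)$ jumps both at $r=1$ and at $r=1+\frac1n$; a short computation with the definition of $\cghp$ shows each of these jumps has height at least $\frac12$, and that the plateau value $(\{o,a_n\},o,0)$ attained on $[1,1+\frac1n)$ stays at $\cghp$-distance $\ge\frac14$ from both $(\{o\},o,0)$ and $(\{o,a,b\},o,0)$. Hence no time-change close to the identity can bring $\rho(\mathcal X_n)$ uniformly close to $\rho(\mathcal X)$, so $\rho(\mathcal X_n)\not\to\rho(\mathcal X)$ in $\mathcal D(\mmcstar)$; since $\rho$ is injective and $\mathcal X_n\to\mathcal X$ in the coarser $\ghp$ topology, the sequence has no Skorokhod limit at all, and the Skorokhod topology is strictly finer.

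\textbf{Part~\eqref{prop:skorokhod:borel}.} The inclusion $\mathcal{B}_{\ghp}\subseteq\mathcal{B}_{\mathrm{Sk}}$ is immediate from part~\eqref{prop:skorokhod:top}. For the converse I would use that the Borel $\sigma$-field of $\mathcal D(\mmcstar)$ for the Skorokhod topology is generated by the coordinate maps $\pi_t$, $t\ge0$, and, by right-continuity of c\`adl\`ag paths, already by $\{\pi_t:t\in Q\}$ for any countable dense $Q\subseteq\mathbb R^{\geq 0}$. Pulling back through the homeomorphism $\rho$ (which has Borel image), $\mathcal{B}_{\mathrm{Sk}}$ is generated by the maps $\mathcal X\mapsto\pcball{\mathcal X}{t}$, $t\in Q$, each of which is measurable from $(\mmstar,\ghp)$ to $(\mmcstar,\cghp)$ (as recalled in Subsection~\ref{subsec:weak}); hence $\mathcal{B}_{\mathrm{Sk}}\subseteq\mathcal{B}_{\ghp}$ and the two $\sigma$-fields coincide. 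Alternatively one may argue abstractly: both metrics make $\mmstar$ Polish (Theorem~\ref{thm:GHPcomplete}, and $\rho(\mmstar)$ closed in $\mathcal D(\mmcstar)$) and one topology refines the other, and two comparable Polish topologies on a set have the same Borel $\sigma$-field.

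The only genuinely delicate point I anticipate is the verification, in part~\eqref{prop:skorokhod:top}, that the coalescing-jumps example does not converge in the Skorokhod topology: this amounts to unwinding the definition of the Skorokhod metric to exclude every admissible time-change, which in turn reduces to the elementary estimates on the two jump heights and the intermediate plateau value indicated above. Everything else is a direct invocation of Theorem~\ref{thm:convergence} or of standard facts about $\mathcal D(S)$.
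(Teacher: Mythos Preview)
Your argument is correct. Part~\eqref{prop:skorokhod:top} follows the same route as the paper: Skorokhod convergence forces convergence of the balls at continuity radii, hence GHP convergence via Theorem~\ref{thm:convergence}; and strictness is witnessed by a three-point example in which two jumps of $\rho(\mathcal X_n)$ coalesce into a single jump of $\rho(\mathcal X)$. The paper uses $\mathcal X_n=\{0,1+\tfrac1n,-1-\tfrac2n\}\subset\mathbb R$ rather than your abstract three-point space, but the mechanism is identical.

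For part~\eqref{prop:skorokhod:borel} your approach differs from the paper's and is cleaner. The paper argues by hand: it exhibits a countable family of sets $A^N_\epsilon(\mathcal X)$ (neighbourhoods of \pmm{} spaces whose curve $\rho(\mathcal X)$ is a step function with finitely many jumps) that generate the Skorokhod topology on $\mmstar$, and then checks directly that each such set is $\ghp$-Borel. You instead invoke the standard fact that the Skorokhod Borel $\sigma$-field on $\mathcal D(\mmcstar)$ coincides with the $\sigma$-field generated by the evaluations $\pi_t$, pull this back through $\rho$, and use the $\ghp$-measurability of $\mathcal X\mapsto\pcball{\mathcal X}{t}$ already noted in Subsection~\ref{subsec:weak}. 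Your alternative one-line argument---two comparable Polish topologies share the same Borel $\sigma$-field, by Lusin--Souslin---is also valid and bypasses any computation. Either of your arguments is shorter than the paper's; the paper's construction has the minor advantage of being self-contained and not relying on cited facts about $\mathcal D(S)$ or on descriptive set theory.
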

\begin{proof}
	\eqref{prop:skorokhod:top}. 
	First, assume $\mathcal X_n\to \mathcal X$ in the Skorokhod topology. Theorem~16.2 of~\cite{bookBi99} implies that $\pcball{\mathcal X}{r}_n\to\pcball{\mathcal X}{r}$ for every continuity radius $r$ of $\mathcal X$. So Theorem~\ref{thm:convergence} gives that $\mathcal X_n\to\mathcal X$ under the metric $\ghp$.
	
	Second, let $\mathcal X_n:=\{0, 1+\frac 1n, -1-\frac 2n\}$ and $X:=\{0,1,-1\}$ equipped with the Euclidean metric and the zero measure (or the counting measure) and pointed at 0. Then $\mathcal X_n\to \mathcal X$ in the metric $\ghp$ but the convergence does not hold in the Skorokhod topology (note that for $r_n=1+\frac 1 n$, the ball $\pcball{\mathcal X}{r_n}_n$ is close to $\{0,1\}$, but is not close to any ball in $\mathcal X$ centered at 0).

	\eqref{prop:skorokhod:borel}. It can be seen that the set of c\`adl\`ag step functions with finitely many jumps is dense in $\mathcal D(S)$. Also, it can be seen that the the set $I$ of $\mathcal X\in\mmstar$ such that $\rho(\mathcal X)$ is such a curve (equivalently, the set $\{\pcball{\mathcal X}{r}: r\geq 0 \}$ is finite) is dense in $\mmstar$ under the Skorokhod topology. It can be seen that the sets $A^{N}_{\epsilon}(\mathcal X)$ for $\mathcal X\in I$ and $\epsilon>0$ generate the Skorokhod topology on $\mmstar$, where $A^N_{\epsilon}(\mathcal X)$ is defined as follows: If $r_1,r_2,\ldots,r_k$ are the set of discontinuity points of $\mathcal X$, $r_0:=0$ and $r_k<N$, consider the set of $\mathcal Y\in\mmstar$ such that there exists $0=:t_0<t_1<\cdots<t_k<t_{k+1}:=N$ such that for all $i\leq k$, one has $\norm{t_i-r_i}<\epsilon$ and for all $t_i\leq t<t_{i+1}$, one has $\cghp(\pcball{\mathcal Y}{t}, \pcball{\mathcal X}{r_i})<\epsilon$. It is left to the reader to show that this is a Borel subset of $\mmstar$ under the metric $\ghp$. This proves the claim.
\end{proof}

\begin{remark}
	If $\mathcal X_n\to \mathcal X$ under the metric $\ghp$ and $\mathcal X$ has no discontinuity radii, then the convergence holds in the Skorokhod topology as well. This follows from the fact that the curves $\rho(\mathcal X_n)$ converge to $\rho(\mathcal X)$ uniformly  on bounded intervals, which follows from Theorem~\ref{thm:convergence} and Lemma~\ref{lem:GHP-monotone}.
\end{remark}

\begin{remark}
	The above result means that to consider $\mmstar$ as a standard probability space, one could consider the Skorokhod metric on $\mmstar$ from the begging. This method is identical to considering the GHP metric if one is interested only in the Borel structure. However, the topology and the notion of weak convergence are different under these metrics. Nevertheless, in most of the examples in the literature that study scaling limits (e.g., the Brownian continuum random tree of~\cite{Al91-crtI}), both notions of convergence hold since the limiting spaces under study usually have no discontinuity radii.
\end{remark}

\iffalse
\begin{remark}
	\mar{\ater: I suggest to delete this}
	Perturbing the time in the above definition heuristically says that to measure the distance of $\mathcal X$ and $\mathcal Y$, the closed ball $\pcball{\mathcal X}{r}$ should be compared to a ball in $\mathcal Y$ whose radius is close to $r$, but not necessarily equal to $r$. This idea is similar to that of~\eqref{eq:a_r}, but in the latter, $\mathcal Y'$ is not required to be a ball in $\mathcal Y$.
\end{remark}
\begin{remark}
	\mar{Keep this?}
	In the general case, the Skorokhod metric is first defined for curves on a bounded interval $[0,M]\subseteq\mathbb R$ and then extended to curves on $[0,\infty)$. The latter step can also be done by a formula similar to~\eqref{eq:GHP}. \mar{\ater: write it?}
	%
	%
\end{remark}
\fi

%

\bibliography{bib} 
\bibliographystyle{plain}

\end{document}